\theoremstyle{plain} 
\newtheorem{theorem}{Theorem} 
\newtheorem{lemma}{Lemma}[section]
\newtheorem{proposition}{Proposition}[section] 
\newtheorem{corollary}{Corollary}[section] 
\newtheorem{dualtheorem}{Duality Theorem}
\newtheorem{dualtheoA}{Duality Theorem A} 
\newtheorem{dualtheoB}{Duality Theorem B} 
\newtheorem{gluingtheoremA}{Gluing Theorem A}
\newtheorem{theoC}{Theorem C} 
\newtheorem{gluingtheorem}{Gluing Theorem} 
\newtheorem{Criterium}{Criterium} 
\newtheorem{PLf}{\bf Poincar\'e\,--\,Lelong formula}
\theoremstyle{definition}
\newtheorem{definition}{Definition}[section] 
\newtheorem{remark}{Remark}[section] 
\newtheorem{example}{Example}[section]
\newcommand{\RR}{\mathbb{R}} 
\newcommand{\CC}{\mathbb{C}} 
\newcommand{\NN}{\mathbb{N}} 
\newcommand{\BB}{\mathbb{B}}  
\newcommand{\const}{{\rm const}} 
\newcommand{\pt}{{\rm pt}} 
\newcommand{\dd}{\,{\rm d}}
\DeclareMathOperator{\inhull}{\rm hull-in}
\DeclareMathOperator{\diam}{diam}
\DeclareMathOperator{\dist}{dist} 
\DeclareMathOperator{\clos}{clos} 
\DeclareMathOperator{\Int}{int}
\DeclareMathOperator{\Meas}{Meas}
\DeclareMathOperator{\har}{har} 
\DeclareMathOperator{\Hol}{Hol}
\DeclareMathOperator{\comp}{cmp}
\DeclareMathOperator{\Zero}{Zero} 
\DeclareMathOperator{\sbh}{sbh}
\DeclareMathOperator{\dsbh}{\text{$\delta${\rm -sbh}}} 
\DeclareMathOperator{\supp}{supp} 
\DeclareMathOperator{\loc}{loc}
\DeclareMathOperator{\sgn}{sgn} 
\DeclareMathOperator{\Borel}{Borel}
\DeclareMathOperator{\dom}{dom}
\DeclareMathOperator{\Dom}{Dom}
\DeclareMathOperator{\conn}{conn}
\DeclareMathOperator{\Conn}{Conn}
\title[Affine Balayage of Measures]{Affine Balayage of Measures and Distribution\\  of Riesz Measures of Subharmonic Functions}
\author{Bulat Khabibullin  \and Enzhe Menshikova}
\email%%[E-mail: ]
{khabib-bulat@mail.ru, algeom@bsu.bashedu.ru}
\thanks{The work was supported by a Grant of the Russian Science Foundation (Project No. 18-11-00002), and by a Grant   of the Russian Foundation of Basic Research (Project No. 19-31-90007).}
\begin{document}
%%%%%%%%%%%%\firstcollaboration{(Submitted by ) }

%%%%%%%%%%%%%%\received{date}
\maketitle

\tableofcontents

\section{Introduction}\label{int}

We have considered in the survey \cite{KhaRozKha19} general concepts of \textit{affine  balayage.\/} In this article we deal with a particular case of such balayage with respect to special  classes of  test subharmonic  functions. This allows us to generalize and develop  some results from \cite{KhaRoz18},~\cite{MenKha19},~\cite{KhaKha19}.

The general concept of balayage can be defined as follows. Let $F$ be a set and $(R,\leq)$ be a   (pre-)ordered set with (pre-)order relation $\leq$. A function $f\colon F\to R$
can be called a (linear) {\it  balayage} of a function $g\in F\to R$ {\it  for a subset\/} $\mathcal V \subset F$, and we write $g\preceq_{\mathcal V}  f$,  if the function $f$ majorizes the function $g$ on $\mathcal V $:
\begin{equation}\label{b0}
g(v)\leq f(v) \quad \text{for all $v\in \mathcal V$} .
\end{equation}
Suppose, in addition, that $R$ is the {\it extended real line}  with usual order relation~$\leq$.  A function $f\colon F\to R$ can be called an {\it affine  balayage} of a function $g\in F\to R$ {\it for a subset\/} $\mathcal V \subset F$, and we write $g\curlyeqprec_{\mathcal V} f$,  if there is a real constant $C$ such that the function $f+C$ majorizes the function $g$ on $\mathcal V$:
\begin{equation}\label{ba}
g(v)\leq f(v)+C \quad \text{for all $v\in \mathcal V$} .
\end{equation}
In this article, we use the balayage or the affine balayage when $F$ is a class of functions on a subdomain $D$ of  finite-dimensional Euclidean space, functions $f$ and $g$ are integrals  defined by positive measures on $D$, and classes  $\mathcal V$ are  special classes of subharmonic functions on $D$ or  near the boundary of this domain $D$  from the inside, respectively (Sec. \ref{Ssec_balm}--Sec. \ref{afm}).

 Using this special cases of balayage and affine balayage, we investigate two related but different problems.  Let $u\not\equiv -\infty$ and $M\not\equiv -\infty$ be a pair  subharmonic functions on a domain $D$ in  $d$-dimensional Euclidean space.
The first is to find the relations between the Riesz measures $\upsilon_u$ and $\mu_M$ of functions $u$ and $M$ respectively under which there exists a subharmonic function $h\not\equiv -\infty$ on $D$ such that $u+h<M$. 
The second is the same question, but for a harmonic function $h$ on $D$. 
The answers to these questions are given in terms of affine balayage.  
Such  function $h$ exists if and only if the measure $\mu_M$ is an affine balayage of the measure $\upsilon_u$ for a special class $\mathcal V$ of subharmonic test functions defined on $D\setminus S_o$, where $S_o$ is  some precompact fixed subset in $D$ (Sec. \ref{Crit}, Criteria \ref{crit1} and \ref{crit2}).

Applications of these results relate to conditions on the distribution of zeros of  a holomorphic functions $f$ on a subdomain $D$  of $n$-dimensional complex space under the restriction $|f|\leq \exp M$, where $M$ is a $\delta$-subharmonic 
function on $D$ (Sec. \ref{secH}, Theorem \ref{ThHol}). For finitely connected domains $D$ in the complex plane (Subsec. \ref{Dfd}), these descriptions are complete (Criterium \ref{crit3}) or almost complete (Theorem \ref{th:3} together with Theorem \ref{ThHol}).

Obtained and used  auxiliary results may have independent significance. It is primarily the Gluing Theorems \ref{gl:th2} and \ref{gl:th3} for subharmonic functions (Sec. \ref{GT}) with  Green's functions (Sec. \ref{GTg}, Gluing Theorems \ref{gl:th4} and \ref{gl:th_es}), properties of linear  balayage of measures and charges (Sec. \ref{Ssec_balm}, Propositions \ref{Prtr}--\ref{Pr_pol}, Examples \ref{sbhJ}--\ref{5}), an internal description for potentials  of balayage of measures (Sec. \ref{Pot}), including Duality Theorems \ref{DT1} and \ref{DTsbh}  for them together with Duality Theorems A and B from Subsections \ref{ASP} and \ref{JP} for Arens\,--\,Singer and Jensen measures and potentials, as well as our  generalized Poisson\,--\,Jensen formula   (Subsec. \ref{PJ}, Theorem \ref{PJf}). 
Because of this the auxiliaries results are often proved in a more general form than is necessary for the main purposes of this article.

\section{Definitions, notations and conventions}\label{Ss12}
\setcounter{equation}{0}

The reader may address to this Subsec. \ref{Ss12} when necessary.

\subsection{Sets, order, topology}\label{SsSets} 

As usual, $\mathbb N:=\{1,2, \dots\}$,  $\mathbb R$ and $\mathbb C$ are the sets 
of all {\it natural, real \/} and {\it complex\/} numbers, respectively; 
$\NN_0:=\{0\}\cup\NN$   is French natural series.

For $d \in \NN$, we  denote by $\mathbb R^d$ the {\it $d$-dimensional real  Euclidean  space\/} with the standard {\it Euclidean norm\/} $|x|:=\sqrt{x_1^2+\dots+x_d^2}$ for $x=(x_1,\dots ,x_d)\in \RR^d$
and the distance function $\dist (\cdot, \cdot)$.
For  the {\it real line\/} $\RR=\RR^1$ with  {\it Euclidean norm-module\/} $|\cdot |$,   
\begin{subequations}\label{df:R}
\begin{align}
\RR_{-\infty}:=\{-\infty\}\cup \RR,\; 	\RR_{+\infty}:=\RR\cup 
\{+\infty\}, \; &|\pm\infty|:=+\infty; 
\;  \RR_{\pm\infty}:=\RR_{-\infty}\cup \RR_{+\infty}
\tag{\ref{df:R}$_\infty$}\label{df:Rr}\\
\intertext{is {\it extended real line\/} in the end topology with two ends $\pm \infty$, with   the order relation $\leq$ on $\RR$ complemented by the 
inequalities $-\infty \leq x\leq +\infty$ for $x\in \RR_{\pm\infty}$, with the {\it positive real axis}}
\RR^+:= \{x\in \RR\colon x\geq 0\}, \; \RR_{+\infty}^+:=\RR^+\cup\{+\infty\}, &\; \begin{cases}
x^+&:=\max\{0,x \},\\
 x^-&:=(-x)^+,
\end{cases}
\; \text{for $x\in \RR_{\pm\infty}$},
\tag{\ref{df:R}$^+$}\label{df:R+}
\\
S^+:=\{x\geq 0\colon x\in S \}, \quad S_*:=S\setminus \{0 \} &\quad\text{for $S\subset \RR_{\pm\infty}$}, \quad \RR_*^+:=(\RR^+)_*, 
\tag{\ref{df:R}$_*^+$}\label{df:R*}\\
x\cdot (\pm\infty):=\pm\infty=:(-x)\cdot (\mp\infty)& \quad \text{for $x\in \RR_*^+\cup (+\infty)$}, 
\tag{\ref{df:R}$_\pm$}\label{{infty}+}\\
\frac{x}{\pm\infty}:=0\quad\text{for $x\in  \RR$},&\quad  \text{but $0\cdot (\pm\infty):=0$}
\tag{\ref{df:R}$_0$}\label{{infty}0}
\end{align}
\end{subequations} 
unless otherwise specified. An open connected (sub-)set of $\RR_{\pm\infty}$  is a {\it  (sub-)interval}  of $\RR_{\pm\infty}$.
The {\it  Alexandroff\/} one-point {\it compactification\/} of $\mathbb R^d$ is denoted by $\mathbb R^d_{\infty}:=\mathbb R^d \cup \{\infty\}$.

The same symbol $0$ is used, depending on the context, to denote the number zero, the origin, zero vector, zero function, zero measure, etc. The {\it positiveness\/} is everywhere understood as $\geq  0$ according to the context.
Given $x\in \RR^d$ and\footnote{A reference mark over a symbol of (in)equality, inclusion, or more general binary relation, etc. means that this relation is somehow related to this reference.}  $r\overset{\eqref{df:R+}}{\in} \RR_{+\infty}^+$, we set 
\begin{subequations}\label{B}
\begin{align}
B(x,r):=\{x'\in \RR^d \colon |x'-x|<r\},&
\quad \overline{B}(x,r):=\{x'\in \RR^d \colon |x'-x|\leq r\},
\tag{\ref{B}B}\label{{B}B}
\\
\quad B(\infty,r):=\{x\in \RR_{\infty}^d \colon |x|>1/r\},&\quad 
\overline B(\infty,r):=\{x\in \RR_{\infty}^d \colon |x|\geq 1/r\},
\tag{\ref{B}$_\infty$}\label{{B}infty}
\\
B(r):=B(0,r),\quad \BB:=B(0,1),& \quad \overline{B}(r):=\overline{B}(0,r),
\quad \overline \BB:=\overline B(0,1).
\tag{\ref{B}$_1$}\label{{B}1}
\\
B_{\circ}(x,r):=B(x,r)\setminus \{x\} ,&\quad 
 \overline{B}_{\circ}(x,r):=\overline{B}(x,r)\setminus \{x\}.
\tag{\ref{B}$_\circ$}\label{Bo}
\end{align}
\end{subequations} 
Thus, a basis of open (respectively closed) neighborhoods of the point $x \in \RR_{\infty}^d$ consists of all  {\it open\/} (respectively {\it closed\/}) {\it balls\/} 
$B(x,r)$ (respectively $\overline B(x,r)$) centered at  $x$ with radius $r>0$.

Given a subset $S$ of $\RR^d_{\infty}$, the \textit{closure\/} 
$\clos S$, the\textit{ interior\/} $\Int S$  and the \textit{boundary\/} $\partial S$ will always be taken relative $\RR^d_{\infty}$. For $S'\subset S\subset \RR^d_{\infty}$ we write  
$S'\Subset S$ if $\clos S'\subset \Int S$.   
An open connected (sub-)set of $\RR^d_{\infty}$  is a {\it  (sub-)domain}  of $\RR^d_{\infty}$. 

\subsection{Functions.}\label{Functions} Let $X,Y$ are sets. We denote by $Y^X$ the set of all functions  $f\colon X\to Y$. The value $f(x) \in Y$ of an arbitrary function $f\in Y^X$ is not necessarily defined for all $x \in X$. The restriction of a function f to $S \subset X$ is denoted by $f\bigm|_{S}$. If $F\subset Y^X$, then $F\bigm|_S:=
\{ f\bigm|_{S} \colon f\in F\}$. We set 
\begin{equation}\label{RX}
\RR_{-\infty}^X\overset{\eqref{df:Rr}}{:=}(\RR_{-\infty})^X, \quad
\RR_{+\infty}^X\overset{\eqref{df:Rr}}{:=}(\RR_{+\infty})^X,\quad
\RR_{\pm\infty}^X\overset{\eqref{df:Rr}}{:=}(\RR_{\pm\infty})^X.  
\end{equation}
A function $f\in \RR_{\pm\infty}^X$ is said to be {\it extended numerical.\/}  
For extended numerical functions $f$, we set 
\begin{equation}\label{dom}
\begin{split}
\Dom_{-\infty}:=f^{-1}(\RR_{-\infty})\subset X,& 
\quad  \Dom_{+\infty} f:=f^{-1}(\RR_{+\infty})\subset X, \\
  \Dom f:=f^{-1}(\RR_{\pm\infty})&=
\Dom_{-\infty} f\bigcup \Dom_{+\infty}f\subset X,\\
\dom f:=f^{-1}(\RR)=&\Dom_{-\infty} f\bigcap \Dom_{+\infty}f\subset X, 
\end{split}
\end{equation} 
For $f,g\in \RR_{\pm\infty}^X$  we write $f= g$ if  
$\Dom f=\Dom g=:D$ and $f(x)=g(x)$ for all $x\in D$, 
and we write $f\leq g$ if $f(x)\leq g(x)$ for all $x\in D$.
For $f\in \RR_{\pm\infty}^X$, $g\in \RR_{\pm\infty}^X$  and a set $S$, we write 
``$f = g$ {\it on\/} $S$\,'' or  ``$f \leq g$ {\it on\/} $S$\,'' if 
 $f\bigm|_{S\cap D}= g\bigm|_{S\cap D}$ or $f\bigm|_{S\cap D}\leq g\bigm|_{S\cap D}$ respectively.

For $f\in F\subset \RR_{\pm\infty}^X $, we set $f^+\colon x\mapsto \max \{0,f(x)\}$,
$x\in \Dom f$,  $F^+:=\{f\geq 0 \colon f\in F\}$. So, $f$  is \textit{positive\/} on $X$ if $f=f^+$, and  we write ``$f\geq 0$ {\it on\/} $X$''.We will use the following construction of {\it countable completion of $F$ up:}
\begin{multline}\label{Fuparrow}
F^{\uparrow}:=\bigl\{ f\in \RR_{\pm\infty}^X \colon \text{\it there is an increasing 
 sequence $(f_j)_{j\in \NN}$, $f_j\in F$,}\\
\text{\it such that $f(x)=\lim_{j\to \infty} f_j(x)$ for all $x\in X$\/ {\rm (we write $f_j\underset{j\to \infty}{\nearrow} f)$}} \bigr\}.
\end{multline}
\begin{proposition}\label{pr:up}
Let $F\subset \RR_{\pm}^X$ be a subset closed relative to the maximum.  Consider sequences
$F\ni f_{kj}\underset{j\to \infty}{\nearrow} f_k\underset{k\to \infty}{\nearrow} f$.
Then $F\ni  \max\{ f_{kj}\colon  j\leq n, k\leq n\}\underset{n\to \infty}{\nearrow}f$.
In particular, $(F^\uparrow)^{\uparrow}=F^\uparrow$.
\end{proposition}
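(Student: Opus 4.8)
The plan is to produce, from the two given families of sequences, a single increasing sequence lying in $F$ and converging pointwise to $f$, which is exactly what membership $f\in F^{\uparrow}$ requires. The natural candidate is the diagonal truncation
\[
g_n:=\max\{f_{kj}\colon j\leq n,\ k\leq n\},\qquad n\in\NN.
\]
Since each $g_n$ is a maximum of finitely many (at most $n^2$) elements of $F$ and $F$ is closed under the maximum, I get $g_n\in F$ for every $n$ (by induction from the binary maximum). Moreover $g_n\leq g_{n+1}$, because the index set defining $g_{n+1}$ contains the one defining $g_n$, so $(g_n)_{n\in\NN}$ is increasing and $\lim_{n\to\infty}g_n(x)=\sup_n g_n(x)$ exists in $\RR_{\pm\infty}$ at every point $x$.

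It then remains to verify $\sup_n g_n=f$ pointwise. Fix $x\in X$. For the upper estimate, the monotonicity $f_{kj}\underset{j\to\infty}{\nearrow}f_k$ gives $f_{kj}(x)\leq f_k(x)$, and $f_k\underset{k\to\infty}{\nearrow}f$ gives $f_k(x)\leq f(x)$; hence $f_{kj}(x)\leq f(x)$ for all $k,j$, so $g_n(x)\leq f(x)$ for every $n$ and $\sup_n g_n(x)\leq f(x)$. For the lower estimate I would interchange suprema: every pair $(k,j)$ is admitted once $n\geq\max\{k,j\}$, whence
\[
\sup_n g_n(x)=\sup_{k,j\in\NN}f_{kj}(x)=\sup_{k\in\NN}\Bigl(\sup_{j\in\NN}f_{kj}(x)\Bigr)=\sup_{k\in\NN}f_k(x)=f(x),
\]
using $\sup_j f_{kj}(x)=f_k(x)$ and $\sup_k f_k(x)=f(x)$ from the defining monotone convergences. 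Combining the two estimates gives $g_n(x)\nearrow f(x)$ for all $x$, that is $F\ni g_n\nearrow f$, which is the asserted conclusion.

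The ``in particular'' clause is then a direct unwinding of definitions. Given $f\in(F^{\uparrow})^{\uparrow}$, choose $f_k\in F^{\uparrow}$ with $f_k\nearrow f$ and, for each $k$, choose $f_{kj}\in F$ with $f_{kj}\nearrow f_k$; the main part places $f$ in $F^{\uparrow}$, so $(F^{\uparrow})^{\uparrow}\subset F^{\uparrow}$. The reverse inclusion $F^{\uparrow}\subset(F^{\uparrow})^{\uparrow}$ is immediate from $F\subset F^{\uparrow}$ (constant sequences), giving equality. I do not expect a genuine obstacle: every limit here is a monotone increasing limit, hence a supremum in $\RR_{\pm\infty}$, so all the interchanges are unconditionally valid and no care about indeterminate forms on the extended line is needed. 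The only point requiring a line of justification is the middle equality above, the diagonal interchange of the double supremum, which holds simply because both descriptions enumerate the same set of values $\{f_{kj}(x)\}_{k,j\in\NN}$.
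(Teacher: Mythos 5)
Your proof is correct, and it is exactly the argument the paper has in mind: the paper states the diagonal truncation $\max\{f_{kj}\colon j\leq n,\ k\leq n\}$ in the proposition itself and dismisses the verification as obvious, which is precisely what you carry out (closure under finite maxima, monotonicity, and the interchange of monotone suprema in $\RR_{\pm\infty}$). No gaps; the paper simply omits these details.
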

The proof is obvious.

For topological space $X$, $C(X)\subset \RR^X$  is the vector space over $\RR$ 
of all continuous functions. We denote the function identically equal to resp. $-\infty$ or $+\infty$ on a set  by the same symbols $-\infty$ or $+\infty$.

For an open set  $O\subset \RR^d_{\infty}$,
 we denote  by $\har (O)$ and  $\sbh (O)$  the classes  of all {\it harmonic\/} (locally affine for $d = 1$) and    {\it subharmonic\/} (locally convex for $d = 1$) functions on $O$, respectively.  The class $\sbh ( O)$  contains the {\it minus-infinity function\/} 
$-\infty$; 
\begin{equation}\label{sbh}
 \sbh_*(  O):=\sbh\,(  O)\setminus 
\{\boldsymbol{-\infty}\}, \quad 
	\sbh^+(  O):=( \sbh (  O))^+.
\end{equation}
Denote by $\dsbh (O):=\sbh(O)-\sbh (O)$ the class of all {\it $\delta$-subharmonic\/} functions on $O$ \cite{Arsove}, \cite[3.1]{KhaRoz18}. The class $\dsbh (  O)$  contains two trivial functions, $\boldsymbol{-\infty}$ and 
$\boldsymbol{+\infty}:=-(\boldsymbol{-\infty})$;
\begin{equation}\label{dsbh} 
\dsbh_*(O)\overset{\eqref{sbh}}{:=}\dsbh(O)\setminus  \{ \boldsymbol{\pm\infty}\}.
\end{equation} 
 If $o\notin O\ni \infty$, then we can to use the  \textit{inversion\/} in  the 
sphere $\partial B(o,1)$ centered at $o \in  \RR^d$:
\begin{subequations}\label{stK}
\begin{align}
\star_{o} \colon x\longmapsto x^{\star_{o}}&:= \begin{cases}
o\quad&\text{for $x=\infty$},\\
o+\frac{1}{|x-o|^2}\,(x-o)\quad&\text{for $x\neq o,\infty$},\\
\infty\quad&\text{for $x=o$},
\end{cases}
\qquad \star:=\star_0=:\star_{\infty}
\tag{\ref{stK}$\star$}\label{stK*}
\\
\intertext{together with the  {\it Kelvin transform\/} \cite[Ch. 2, 6; Ch. 9]{Helms}
}
u^{\star_o}(x^{\star_o})&=|x-o|^{d-2}u(x), \quad x ^{\star_o}\in   
O^{\star_o}:=\{x^{\star_o}\colon x\in
  O\}, 
\tag{\ref{stK}u}\label{stKu}
\\
&\Bigl(u\in \sbh (O)\Bigr)\Longleftrightarrow  \Bigl(u^{\star_o}\in \sbh (O^{\star_o})\Bigr).
\tag{\ref{stK}s}\label{stKs}
\end{align}
\end{subequations}

For a  subset $S\subset \RR_{\infty}^d$,  the classes $\har (S)$, $\sbh(S)$, $\dsbh (S):=\sbh(S)-\sbh (S)$, and $C^k(S)$ with $k\in \NN\cup \{\infty\}$ consist of the restrictions to $S$ of {\it harmonic, subharmonic, $\delta$-subharmonic,{\rm  and}
  k times continuously differentiable functions\/} in some (in general, its own for each function) open set $O\subset \RR_{\infty}^d$ containing $S$.   Classes 
$\sbh_*(S)$, $\dsbh_* (S)$ are defined like previous classes \eqref{sbh}, \eqref{dsbh},
\begin{equation}\label{+S}
\sbh^+(S)\overset{\eqref{sbh}}{:=}\bigl\{u\in \sbh(S)\colon u\geq 0 \text{ on } S\bigr\}.
\end{equation}

By $\const_{a_1,a_2,\dots}\in \RR$ and $\const_{a_1,a_2,\dots}^+\in \RR^+$ we denote constants, and constant functions, in general, depend on $a_1,a_2,\dots$ and, unless otherwise specified, only on them,
where the dependence on dimension $d$ of $\RR_{\infty}^d$ will be not specified and not discussed.

\subsection{Measures and charges.} 

Let $\Borel (S)$ be the class of all Borel subsets in $S\subset \in \RR_{\infty}^d$, and $S\in \Borel(\RR_{\infty}^d)$. We denote by $\Meas(S)$  the class of all Borel signed measures, or, \textit{charges\/} on $S\in{\Borel} (\RR_{\infty}^d)$;   $\Meas_{\comp}(S)$ is the class of charges $\mu \in \Meas(S)$ with a compact support $\supp \mu \Subset S$; 
\begin{subequations}\label{m}
\begin{align}
\Meas^+(S)&:=\{ \mu\in  \Meas (S)\colon \mu\geq 0\},
 \; \Meas_{\comp}^+(S):= \Meas_{\comp} (S)\cap \Meas^+(S);
\tag{\ref{m}$^+$}\label{m+}\\ 
  \Meas^{1+}(S)&:=\{\mu \in \Meas^+(S)\colon \mu(S)=1 \} \text{, \;  \it  probability measures}.
\tag{\ref{m}$^1$}\label{m1}
\end{align}
\end{subequations}
For a charge $\mu \in \Meas(S)$, $\mu^+$, $\mu^-:=(-\mu)^+$ and $|\mu| := \mu^+ +\mu^-$ are  its {\it upper, lower,\/} and {\it total variations,} respectively.  So, $\delta_x \in \Meas_{\comp}^{1+} (S)$
is the {\it Dirac measure\/} at a point $x \in S$, i.e., $\supp \delta_x = \{x\}$, $\delta_x (\{x\}) = 1$. We denote by  $\mu\bigm|_{S'}$
the restriction of $\mu$ to  $S'\in {\Borel} (\RR_{\infty}^d)$.

If the Kelvin transform \eqref{stK} translates the subharmonic function $u$ into another function $u^{\star}_o$ \eqref{stKu}, then its Riesz measure $\upsilon$ is transformed  common use  image under its own mapping-inversion of type $1$ or $2$. These rules are described in detail in L. Schwartz's  monograph \cite[Vol.~I, Ch.IV, \S~6]{Schwartz} and we do not dwell on them here.

Given $S\in{\Borel}(\RR_{\infty}^d)$ and $\mu\in \Meas (S)$,  the class $L^1_{\loc} (S, \mu)$ consists of all extended numerical locally integrable functions with respect to the measure $\mu$ on $S$. For the Lebesgue measure $\lambda_d$, we set  $L^1_{\loc} (S):=L^1_{\loc} (S, \lambda_d)$. For $ L\subset L^1_{\loc}(S,\mu )$, we define a subclass
\begin{equation}\label{MLl}
L \dd \mu:=\bigl\{\nu \in  \Meas (S)\colon 
\text{\it  there exists $g\in  L$ such that\/   $\dd \nu=g \dd \mu$} \bigr\}
\end{equation} 
 of the class of all absolutely continuous charges with respect to $\mu$. 
For $\mu \in \Meas(S)$, we set  
\begin{equation}\label{mB}
\mu(x,r):=\mu\bigl(B(x,r)\bigr) \text{ if $B(x,r)\overset{\eqref{B}}{\subset} S$}.
\end{equation}
Let ${\bigtriangleup}$  be the  the {\it Laplace operator\/}  acting in the sense of the
theory of distributions, $\Gamma$ be the \textit{gamma function,}
\begin{equation}\label{sd-1}
s_{d-1}:=\frac{2\pi^{d/2}}{\Gamma (d/2)}
\end{equation}
be the \textit{surface area\/} of  the \textit{$(d-1)$-dimensional unit sphere\/} $\partial \BB$ embedded in $\RR^d$.
For function $u\in \sbh_*(O)$, the {\it  Riesz measure of\/} $u$ is a  Borel 
(or Radon \cite[A.3]{R}) \textit{ positive measure }
\begin{equation}\label{df:cm}
\begin{split}
\varDelta_u&:= c_d {\bigtriangleup}  u\overset{\eqref{m+}}{\in} \Meas^+(  O), 
\quad \text{where} \\
c_d&\overset{\eqref{sd-1}}{:=}\frac{1}{s_{d-1}(1+( d-3)^+)}=\frac{\Gamma(d/2)}{2\pi^{d/2}\max \{1, d-2\bigr\}}.
\end{split}
\end{equation}
In particular,   $\varDelta_u(S)<+\infty$ for each subset $S\Subset   O$.
By definition, $\varDelta_{\boldsymbol{-\infty}}(S):=+\infty$ for each  $S\subset 
  O$. We use the  \textit{outer Hausdorff $p$-measure} $\varkappa_p$ with $p\in \NN_0$
\cite[A6]{Chirka}:
\begin{subequations}\label{df:sp}
  \begin{align}
\varkappa_p(S)&:=b_p \lim_{0<r\to 0} \inf \biggl\{\sum_{j\in \NN}r_j^p\,\colon  
  S\subset \bigcup_{j\in \NN}B(x_j,r_j),  0\leq r_j<r\biggr\}, 
\tag{\ref{df:sp}H}\label{df:spH}
 \\ 
b_p &\overset{\eqref{df:cm}}{:=}
\begin{cases} 1\quad&\text{if  $p=0$,}\\
2\quad&\text{if  $p=1$,}\\
\dfrac{s_{p-1}}{p}\quad&\text{ if $p\in \NN,$}
\end{cases}  \quad \text{is the {\it volume of the unit ball $\BB$ in  $\RR^p$}}.
\tag{\ref{df:sp}b}\label{df:spb}
\end{align}
\end{subequations}
Thus, for $p=0$, for any $S\subset \RR^d$, its Hausdorff $0$-measure $\varkappa_0(S)$ is the cardinality $\#S$ of $S$, for $p=d$ we see that $\varkappa_d\overset{\eqref{df:spH}}{=:}\lambda_d$ is  the 
 the {\it Lebesgue measure\/} on 
$S \subset \RR_{\infty}^d$, %%where, if $\infty \in S$, we preliminary use the %%inversion\eqref {stKu}, 
and $\sigma_{d-1}:=\varkappa_{d-1}\bigm|_{\partial \BB}$ is the $(d-1)$-dimensional   measure of surface area on the unit sphere $\partial \BB$ in the usual sense. 

\subsection{Topological concepts. Inward-filled hull of subset in  open set}\label{hullin} 

Let $O$ be a topological space, $S\subset O$, $x\in O$.  
We denote by $\Conn_O S$ a set of all connected components of $S$, and  
 $\conn_O (S,x) \in \Conn_O S$ is connected component of $S$ containing  $x$. We write $\clos_O S$, $\Int_O S$,  and $\partial_O S$ for  the \textit{closure,\/} the\textit{ interior,\/}  and the \textit{boundary\/} of $S$ in  $O$. 
The set $S$ is \textit{$O$-precompact\/} if $\clos_O S$ is a compact subset of $O$, and we write $S\Subset O$.  
\begin{definition}
\label{df:hole}
An arbitrary  $O$-precompact  connected component of $O\setminus S$ is called  a \textit{hole\/} in  $S$ with respect to\/ $O$. 
The union of a subset $K\subset O$ with all holes in it will be called the  \textit{inward-filled hull\/} of this set $K$ with respect to $O$ that is denoted further as 
\begin{equation}\label{inhull}
\inhull_O K:=K\bigcup \Bigl(\bigcup \{C\in \Conn_O (O\setminus K) \colon C\Subset O\}\Bigr). 
\end{equation}
Denote by $O_{\infty}$  the {\it  Alexandroff one-point compactification of\/}   $O$ with underlying set $O \sqcup \{\infty\}$, where $\sqcup$ is the \textit{disjoint union\/} of $O$ with a single point $\infty \notin O$. If this space $O$ is a topological subspace of some ambient topological space $T\supset O$, then this point $\infty$  can be identified with the boundary $\partial O\subset T$ , considered as a single point $\{\partial O\}$.
\end{definition} 
Throughout this article, we use these topological concepts only in  cases when  $O$ is an {\it open non-empty proper Greenian open subset} \cite[Ch.5, 2]{Helms} of $\RR_{\infty}^d=:T$, i.\,e.,   
\begin{subequations}\label{ODj}
\begin{align}
\varnothing \neq O=\Int_{\RR_{\infty}^d}O= \bigsqcup_{j\in N_O} D_j\neq \RR_{\infty}^d, \quad j\in N_O\subset \NN, \quad D_j=\conn_{\RR_{\infty}^d}(O,x_j), 
\tag{\ref{ODj}O}\label{{ODj}O}
\\
\intertext{where points  $x_j$  lie in {\it different connected components\/} $D_j$ of $O\subset \RR_{\infty}^d$, \underline{or}}
%%\neq \RR_{\infty}^d 
\quad \text{$O=D$, where
%% \varnothing \neq 
$D$ is an open connected subset, i.\,e., 
a {\it domain}}.
\tag{\ref{ODj}D}\label{{ODj}D}
\end{align}
\end{subequations}
The dependence on such an open set $O$ or such domain $D$ for constants $\const_{\dots}$ will not be indicated in the subscripts and is not discussed.
For an open set $O$ from \eqref{{ODj}O}, we often use results that are proved in our references only for domains $D$ from \eqref{{ODj}D}. This is acceptable since all such cases concern only to individual domains-components $D_j$. So, if $S\Subset O$, then $S$ meets only finite many components $D_j$. In addition, we give proofs of our statements only for cases $O,D \subset \RR^d$. If we have 
$o\notin D_j=D\ni \infty$, then we can use the  \textit{inversion\/} relative to the 
sphere $\partial B(o,1)$ centered at $o \in  \RR^d$ as in \eqref{stK}.
\begin{proposition}[{\rm \cite[6.3]{Gardiner}, \cite{Gauther_B}}]\label{KOc}
Let  $K$ be a compact  set in an open set  $O \subset \RR^d$.  Then 
\begin{enumerate}[{\rm (i)}]
\item\label{Ki} $\inhull_{O}  K$ is a compact subset in $O$;
\item\label{Kii} the set\/ $O_{\infty} \setminus \inhull_{O}  K$ is connected and locally connected subset in\/ $O_{\infty}$; 
\item\label{Kiii}   the inward-filled hull of $K$ with respect to $O$ coincides with the complement in $O_{\infty}$ of  connected component of $O_{\infty}\setminus K$ containing the point $\infty$, i.\,e., 
\begin{equation*}
\inhull_{O}  K=O_{\infty}\setminus \conn_{O_{\infty}\setminus K}(\infty);
\end{equation*}
\item\label{Kiiv} if $O'\subset \RR_{\infty}^d$ is an open subset and 
 $O\subset  O'$, then  $\inhull_{O}  K\subset \inhull_{O'}  K$;
\item\label{Kiv} $\RR^d\setminus \inhull_{O}  K$ has only finitely many components, i.\,e., $$\#\Conn_{\RR^d_{\infty}}(\RR^d\setminus \inhull_{O}  K)<\infty.$$
\end{enumerate}
\end{proposition}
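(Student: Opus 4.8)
Following the convention of the excerpt I would assume $O\subset\RR^d$ (finite part), passing to the case $\infty\in O$ by an inversion as in \eqref{stK}. The whole argument is organized inside the one-point compactification $O_\infty$, and the single geometric input that makes everything work is that, since $K\Subset O$, the number $\delta:=\dist(K,\RR^d\setminus O)$ is strictly positive. First I would record two preliminary facts about a connected component $C$ of the open set $O\setminus K$. Since $\RR^d$ is locally connected, $C$ is open, and being a component it is relatively closed in $O\setminus K$, whence $\partial_O C\subset K$. \emph{(a)} If $C$ is $O$-precompact, then $\clos_{\RR^d}C=\clos_O C\subset O$ is compact and $\partial_{\RR^d}C\subset K$, so $C$ is clopen in $\RR^d\setminus K$; thus $C$ is in fact a whole connected component of $\RR^d\setminus K$, and in particular it is bounded. \emph{(b)} Because the neighbourhoods of $\infty$ in $O_\infty$ are exactly the sets $\{\infty\}\cup(O\setminus L)$ with $L\Subset O$ compact, one has $\infty\in\clos_{O_\infty}C$ if and only if $C$ lies in no compact subset of $O$, i.e. if and only if $C$ is \emph{not} $O$-precompact.

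From \emph{(b)} the identity \eqref{inhull} of item \eqref{Kiii} is essentially formal. Each $O$-precompact component is clopen in $O_\infty\setminus K$ and avoids $\infty$, while each non-precompact $C$ satisfies $\infty\in\clos_{O_\infty}C$, so $C\cup\{\infty\}$ is connected; the union of $\{\infty\}$ with all non-precompact components is therefore connected, and its complement in $O_\infty\setminus K$ is the clopen union of the precompact components. Hence $\conn_{O_\infty\setminus K}(\infty)=\{\infty\}\cup\bigcup\{C:C\text{ non-precompact}\}$, and its complement in $O_\infty$ is $K\cup\bigcup\{C:C\text{ precompact}\}=\inhull_O K$, which is \eqref{Kiii}. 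Monotonicity \eqref{Kiiv} then follows directly from \emph{(a)}: if $O\subset O'$ and $C$ is an $O$-precompact hole, then $C$ is a component of $\RR^d\setminus K$ with $\clos_{\RR^d}C\subset O\subset O'$, hence an $O'$-precompact hole, so $\inhull_O K\subset\inhull_{O'}K$.

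The main obstacle is compactness \eqref{Ki}. Closedness of $\inhull_O K$ in $O$ is cheap: its complement in $O$ is precisely the union of the (open) non-precompact components, hence open. Relative compactness is the delicate point, namely that the filled holes cannot escape towards $\partial O$. Here I would use $\delta>0$ as follows. Given a precompact hole $C$ and $y\in C$, let $q\in\RR^d\setminus O$ be nearest to $y$; the segment $[y,q]$ must meet $K$, since otherwise it would stay in $\RR^d\setminus K$ and, containing $y\in C$, would lie in $C\subset O$, contradicting $q\notin O$. If $z\in[y,q]\cap K$ then $\dist(y,\RR^d\setminus O)=|y-q|\ge|z-q|\ge\dist(K,\RR^d\setminus O)=\delta$. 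Combining this with boundedness from \emph{(a)} (all precompact holes lie in any ball $B(0,R)\supset K$), the set $\inhull_O K$ is contained in the compact set $\{x:\dist(x,\RR^d\setminus O)\ge\delta\}\cap\overline B(0,R)\subset O$, and being closed in $O$ it is compact. Once \eqref{Ki} holds, $\inhull_O K$ is closed in the compact Hausdorff space $O_\infty$, so $O_\infty\setminus\inhull_O K$ is open and, by \eqref{Kiii}, equals the connected set $\conn_{O_\infty\setminus K}(\infty)$; this gives connectedness in \eqref{Kii}.

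It remains to finish \eqref{Kii} and to prove \eqref{Kiv}. For local connectedness of $G:=O_\infty\setminus\inhull_O K$ the only nontrivial point is $\infty$: given a basic neighbourhood $\{\infty\}\cup(O\setminus L)$, I would set $L':=\inhull_O(K\cup L)$, which is compact by \eqref{Ki}. Then $O_\infty\setminus L'$ is connected by \eqref{Kiii}, open by \eqref{Ki}, contains $\infty$, and lies both in $G$ (because $\inhull_O K\subset L'$ by \eqref{Kiiv}) and in $\{\infty\}\cup(O\setminus L)$, so it is the desired connected neighbourhood. Finally, for \eqref{Kiv}, by \emph{(a)} and \eqref{Kiii} the set $\RR^d\setminus\inhull_O K$ is a union of components of $\RR^d\setminus K$; a bounded surviving component $C$ must meet $\RR^d\setminus O$, since a bounded component contained in $O$ would be an $O$-precompact hole and hence already absorbed into $\inhull_O K$. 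Choosing $p\in C\cap(\RR^d\setminus O)$ we get $\dist(p,K)\ge\delta$, so $B(p,\delta)\subset C$; these balls are pairwise disjoint and lie in $B(0,R)$, so a volume count bounds the number of bounded components, and adding the single unbounded component yields finiteness. Thus the entire proposition rests on the distance $\delta$ and the dichotomy \emph{(b)}, the one genuinely substantial step being the relative compactness in \eqref{Ki}.
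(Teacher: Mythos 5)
Your proposal cannot be matched against an internal argument, because the paper does not prove Proposition \ref{KOc} at all: the statement is imported from \cite[6.3]{Gardiner} and \cite{Gauther_B}. Judged on its own merits, your proof is essentially correct and self-contained. Its architecture is sound: observation \emph{(a)} (an $O$-precompact hole is a full, clopen, bounded component of $\RR^d\setminus K$ with boundary in $K$), the dichotomy \emph{(b)} ($\infty\in\clos_{O_\infty}C$ iff $C$ is not $O$-precompact), the segment argument giving $\dist(y,\RR^d\setminus O)\ge\delta$ on every hole, and the ball-packing count all hold up, and they deliver \eqref{Kiii}, \eqref{Kiiv}, \eqref{Ki}, \eqref{Kii}, \eqref{Kiv} in the order you derive them.

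Two repairs are needed, one of substance. In the local-connectedness step you justify the inclusion $\inhull_O K\subset\inhull_O(K\cup L)$ ``by \eqref{Kiiv}''; but \eqref{Kiiv} is monotonicity in the \emph{open set} ($O\subset O'$ with $K$ fixed), whereas here you need monotonicity in the \emph{compact set} ($K\subset K\cup L$ with $O$ fixed), which is a different statement and is proved nowhere in your text. It is true and easy: if $x$ lies in a precompact hole $C$ of $O\setminus K$ and $x\notin K\cup L$, then the component of $O\setminus(K\cup L)$ containing $x$ is contained in $C$ (since $O\setminus(K\cup L)\subset O\setminus K$), hence is itself $O$-precompact, so $x\in\inhull_O(K\cup L)$; but this must be stated and proved rather than attributed to \eqref{Kiiv}. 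The remaining points are cosmetic: when $O=\RR^d$ there is no nearest point $q\in\RR^d\setminus O$, so that degenerate case should be split off (boundedness alone then gives \eqref{Ki}); and for $d=1$ the set $\RR\setminus K$ may have two unbounded components, so ``the single unbounded component'' in your count for \eqref{Kiv} should read ``the at most two unbounded components''. Neither affects the conclusions.
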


\section{Gluing Theorems}\label{GT}
\setcounter{equation}{0}

\begin{gluingtheoremA}[{\rm \cite[Theorem 2.4.5]{R},\cite[Corollary 2.4.4]{Klimek}}]\label{gl:th1}
Let $\mathcal O$ be an open set in $\RR^d$, 
and let $\mathcal O_0$ be a  subset of ${\mathcal O}$. If $u\in \sbh({\mathcal O})$, $u_0\in \sbh ({\mathcal O}_0)$, 
and 
\begin{equation}\label{Uu}
\limsup_{\mathcal O_0\ni x'\to x}u_0(x')=u(x) \quad\text{for each $x\in {\mathcal O}\cap \partial {\mathcal O}_0$}, 
\end{equation}
then the formula 
\begin{equation}\label{gU}
U:=\begin{cases}
\max\{u,u_0\} &\text{ on ${\mathcal O}_0$},\\
 u &\text{ on ${\mathcal O}\setminus {\mathcal O}_0$}
\end{cases}
\end{equation}
defines a subharmonic function on ${\mathcal O}$.
\end{gluingtheoremA}
\begin{gluingtheorem}\label{gl:th2}
Let $O, O_0$ be a pair of open subsets in $\RR^d$,  
$v\in \sbh(O)$ and $ v_0\in \sbh(O_0)$ be a pair of  functions such that 
\begin{subequations}\label{g01}
\begin{align}
\limsup_{\stackrel{x'\to x}{x'\in O_0\cap O}} v(x')&\leq 
v_0(x) \quad\text{for each $x\in O_0\cap \partial O$},
\tag{\ref{g01}$_0$}\label{g010}
\\
\limsup_{\stackrel{x'\to x}{x'\in O_0\cap O}} v_0(x')&\leq v(x) \quad\text{for each $x\in O\cap \partial O_0$}.
\tag{\ref{g01}$_1$}\label{g011}
\end{align}
\end{subequations}
Then the function 
\begin{equation}\label{Vv}
V:=\begin{cases}
v_0&\text{ on $O_0\setminus O$},\\
\max \{v_0,v\}\leq v_0^++v^+&\text{ on $O_0\cap O$},\\
v&\text{ on $O\setminus O_0$,}
\end{cases}
\end{equation}
is subharmonic on $O_0\cup O$.
\end{gluingtheorem}
\begin{proof} It is enough to apply  Gluing Theorem A  twice:
\begin{enumerate}
\item[{\bf [O$_0$]}] to one pair of functions  
\begin{subequations}\label{O0}
\begin{align*}
u&:=v_0\in \sbh(O_0), \quad {\mathcal O}:=O_0;
\\
u_0&:=v\bigm|_{O\cap O_0}\in \sbh(O\cap O_0) , 
\quad {\mathcal O}_0:=O\cap O_0\subset O_0, 
\end{align*}
\end{subequations} 
under condition \eqref{g010} realizing condition \eqref{Uu};
\item[{\bf [O]}] 
to another pair of functions 
\begin{subequations}\label{OO}
\begin{align*}
u&:=v\in \sbh(O), \quad {\mathcal O}:=O;
\\
u_0&:=v_0\bigm|_{ O_0\cap O}\in \sbh(O_0\cap O) , 
\quad {\mathcal O}_0:= O_0\cap O\subset O, 
\end{align*}
\end{subequations} 
under condition \eqref{g011} realizing condition \eqref{Uu}.
\end{enumerate}
These two glued subharmonic functions coincide at the open intersection
$O\cap O_0$ and give subharmonic function $V$ defined in \eqref{Vv}.
\end{proof}

\begin{gluingtheorem}[{\rm quantitative version}]\label{gl:th3}
Let $O$ and $O_0$ be a pair of open subsets in $\RR^d$, and 
$v\in \sbh(O)$ and    $g\in \sbh(O_0)$ be a pair of  functions such that 
\begin{subequations}\label{g01v}
\begin{align}
-\infty <m_v\leq & \inf_{x\in O\cap \partial O_0} v(x), 
\tag{\ref{g01v}m}\label{g01vOm}
\\ 
 \sup_{x\in O_0\cap \partial O}\limsup_{\stackrel{x'\to x}{x'\in O_0\cap O}} v(x')&\leq M_v<+\infty, 
\tag{\ref{g01v}M}\label{g01vOM}
\\
-\infty < \sup_{x\in O\cap \partial O_0}
\limsup_{\stackrel{x'\to x}{x'\in O\cap O_0}} g(x')\leq m_g&
< M_g\leq  \inf_{x\in O_0\cap \partial O} g(x) <+\infty.
\tag{\ref{g01v}g}\label{g01g}
\end{align}
\end{subequations}
If we choose the function 
\begin{equation}\label{v0g}
v_0:=\frac{M_v^++m_v^-}{M_g-m_g} (2g-M_g-m_g)\in \sbh(O_0), 
\end{equation} 
then the function $V$ from \eqref{Vv} is subharmonic on $O_0\cup O$. 
\end{gluingtheorem}
\begin{proof}
The function $v_0$ from definition \eqref{v0g} is subharmonic on $O_0$ since this function $v_0$  has a form $\const^+g+\const$ with $\const^+\in \RR^+$, $\const \in \RR$. 
In addition, by construction \eqref{v0g}, for each $x\in O_0\cap \partial O$, we obtain
\begin{multline*}
\limsup_{\stackrel{y\to x}{y\in O_0\cap O}} v(y)\overset{\eqref{g01vOm}-\eqref{g01vOM}}{\leq}
 M_v^++m_v^-= \frac{M_v^++m_v^-}{M_g-m_g} (2M_g-M_g-m_g)\\
 \overset{\eqref{g01g}}{\leq}\frac{M_v^++m_v^-}{M_g-m_g} \Bigl(2 \inf_{x\in O_0\cap \partial O} g(x)  -M_g-m_g\Bigr)
=\inf_{x\in O_0\cap \partial O} \frac{M_v^++m_v^-}{M_g-m_g} \Bigl(2  g(x)  -M_g-m_g\Bigr)\\
\overset{\eqref{v0g}}{=} \inf_{O_0\cap\partial O} v_0\leq 
v_0(x), \quad \forall x \in O_0\cap \partial O.
\end{multline*} 
Thus,  we have  \eqref{g010}. 
Besides, by construction \eqref{v0g}, for each $x\in O\cap \partial O_0$, we obtain
\begin{multline*}
\limsup_{\stackrel{x'\to x}{x'\in O_0\cap O}} v_0(x')
\overset{\eqref{v0g}}{\leq}
\frac{M_v^++m_v^-}{M_g-m_g} \biggl(2\limsup_{\stackrel{x'\to x}{x'\in O_0\cap O}} g(x')-M_g-m_g\biggr)
\\
\overset{\eqref{g01g}}{\leq}
\frac{M_v^++m_v^-}{M_g-m_g} (2m_g-M_g-m_g)
=-(M_v^++m_v^-)\leq  -m_v^-\leq m_v
\\
\overset{\eqref{g01vOm}}{\leq}
 \inf_{x\in O\cap \partial O_0} v(x)\leq v(x), 
 \quad \forall x\in  O\cap \partial O_0.
\end{multline*} 
Thus,  we have  \eqref{g011}, and  Gluing Theorem \ref{gl:th3} follows from 
Gluing Theorem \ref{gl:th2}. 
\end{proof}

\begin{remark} 
Theorems of this section can be easily transferred to the cone of plurisubharmonic functions
\cite[Corollary 2.9.15]{Klimek}.
We sought to formulate our theorems and their proofs with the possibility of their fast transport to the plurisubharmonic functions and to abstract potential theories with  more general constructions based on the theories of harmonic spaces and sheaves  in the spirit 
of books \cite{BB66},  \cite{ConCor}, \cite{BB80}, \cite{BBC81}, \cite{BH}, \cite{LNMS}, \cite{AL},   etc.
\end{remark}

\section{Gluing with Green's Function}\label{GTg}
\setcounter{equation}{0}

\begin{definition}[{\rm \cite{R}, \cite{HK}, \cite{Landkoff}}]\label{df:kK} 
 For $q\in \RR$ and $d\in \NN$, we set  
\begin{subequations}\label{kK}
\begin{align}
k_q(t)& := \begin{cases}
\ln t  &\text{ if $q=0$},\\
 -\sgn (q)  t^{-q} &\text{ if $q\in \RR_*$,} 
\end{cases}
\qquad  t\in \RR_*^+,
\tag{\ref{kK}k}\label{{kK}k}
\\
K_{d-2}(x,y)&:=\begin{cases}
k_{d-2}\bigl(|x-y|\bigr)  &\text{ if $x\neq y$},\\
 -\infty &\text{ if $x=y$ and $d\geq 2$},\\
0 &\text{ if $x=y$ and  $d=1$},\\
\end{cases}
\quad  (x,y) \in \RR^d\times \RR^d.
\tag{\ref{kK}K}\label{{kK}K}
\end{align}
\end{subequations}
 \end{definition}

Recall that a set $E\subset \RR^d$ is called {\it polar\/} if there is a function $u\in \sbh_*(\RR^d)$ such that 
\begin{equation}\label{E}
\Bigl(E\subset (-\infty)_u:=\{ x\in \RR^d\colon u(x)=-\infty \} \Bigr)
\Longleftrightarrow \Bigl(\text{Cap}^* E=0\Bigr),
\end{equation}
where the set $(-\infty)_u$ is {\it minus-infinity\/} $G_{\delta}$-set for the function 
$u$,   
\begin{equation*}
\text{Cap}^*(S):=
\inf_{S\subset O=\Int O}  
\sup_{\stackrel{C=\clos C\Subset O}{\mu\in \Meas^{1+}(C)}} 
 k_{d-2}^{-1}\left(\iint K_{d-2} (x,y)\dd \mu (x) \dd \mu(y) \right)
\end{equation*}
is the {\it outer capacity\/} of $S\subset \RR^d$.

Let $\mathcal O$ be an \textit{open  proper} subset in $\RR_{\infty}^d$. 
Consider a point $o\in \RR^d$ and subsets $S_o, S\subset \RR_{\infty}^d $  such that  
\begin{equation}\label{x0S}
\RR^d \ni o \in \Int S_o\subset S_o\Subset S \subset \Int \mathcal O
=\mathcal O\subset \RR^d_{\infty}\neq \mathcal O. 
\end{equation} 
Let $D$ be a \textit{domain\/} in $\RR_{\infty}^d$ such that
\begin{equation}\label{Dg}
 o\overset{\eqref{x0S}}{\in} \Int  S_o\subset S_o\Subset D \Subset S \subset \mathcal O.
\end{equation}
Such domain $D$ possesses the extended  Green's function $g_D (\cdot, o)$\/  {\rm (see \cite[5.7.2]{HK}, \cite[Ch.~5, 2]{Helms})} with pole at the point 
$o\overset{\eqref{Dg}}{\in} D$ 
 described by the following properties:  
\begin{subequations}\label{gD}
\begin{align}
g_D(\cdot , o)&\in \sbh^+ \bigl(\RR_{\infty}^d\setminus \{o\}\bigr)\subset \sbh^+\bigl(\mathcal O\setminus \{o\}\bigr) , 
\tag{\ref{gD}s}\label{gDs}\\
g_D(\cdot ,o)&= 0\text{ on $\RR_{\infty}^d\setminus \clos D \supset \mathcal O\setminus \clos D\supset \mathcal O\setminus S$}, 
\tag{\ref{gD}$_0$}\label{gD0}\\
g_D(\cdot , o)&\in \har \bigl(D\setminus \{o\}\bigr)\subset 
\har\bigl(S_o\setminus\{o\}\bigr)\overset{\eqref{Bo}}{\subset} \har\bigl(B_{\circ}(o,r_o)\bigr)
\tag{\ref{gD}h}\label{gDh}\\
\intertext{with a number $r_o\in \RR_*^+$, $g_D(o,o):=+\infty$,}
g_D(x,o)&\overset{\eqref{{kK}K}}{=}-K_{d-2}(x,o)+O(1) \quad\text{as $o\neq x\to o$}.
\tag{\ref{gD}o}\label{gD0a}\\
\intertext{and the following strictly positive number}
0<M_g&:=\inf_{x\in \partial S_o} g_D (x,o)=
\const^+_{o, S_o, D,S} 
 \tag{\ref{gD}M}\label{Mg}\\
\intertext{depends only on $S_o,S,D$ and the pole $o$, and, by the minimum principle, we have}
g_D(x,o)-M_g&\overset{\eqref{Mg}}{\geq} 0\quad\text{for all $x\in S_o\setminus \{o\}$}.
\tag{\ref{gD}M+}\label{Mg+}
\end{align}
\end{subequations}
Properties \eqref{gD} for the extended Green's function 
$g_D (\cdot, o)$\/  from \eqref{x0S}--\eqref{Dg} are well known \cite[4.4]{R}, \cite[5.7]{HK}, and 
property \eqref{x0S} follows from $0< g(\cdot ,o)\in C\bigl(D\setminus \{o\}\bigr)$ on $D\setminus \{o\}$.  

\begin{gluingtheorem}\label{gl:th4}
Under conditions  \eqref{x0S}
suppose that a function $v \in \sbh(\mathcal O\setminus S_o)$ satisfies constraints from above and below  in the form
\begin{equation}\label{vabS}
-\infty<m_v\overset{\eqref{g01vOm}}{\leq} \inf_{S\setminus S_o} v\leq  
\sup_{S\setminus  S_o} v\overset{\eqref{g01vOM}}{\leq}  M_v<+\infty. 
\end{equation}
Every domain $D$ with inclusions  \eqref{Dg} possesses  the extended Green's function $g_D(\cdot , o)$ with pole $o\in \Int S_o$, properties  \eqref{gD} and constant $M_g$ of   \eqref{Mg} such that the choice of function 
\begin{subequations}\label{gDV}
\begin{align}
v_0&\overset{\eqref{v0g}}{:=}\frac{M_v^++m_v^-}{M_g} \bigl(2g_D(\cdot , o) -M_g\bigr)\in 
\sbh\bigl(\RR_{\infty}^d\setminus \{o\}\bigr)\subset 
\sbh\bigl(\Int S\setminus \{o\}\bigr),
\tag{\ref{gDV}v}\label{gDVv}
\\
\intertext{defines the subharmonic function}
V&\overset{\eqref{Vv}}{:=}\begin{cases}
v_0&\text{ on $S_o$},\\
\sup \{v_0,v\}\leq v_0^++v^+&\text{ on $S\setminus S_o$},\\
v&\text{ on $\mathcal O\setminus S$,}
\end{cases} \qquad \text{from $\sbh_*\bigl(\mathcal O\setminus \{o\}\bigr)$,}
\tag{\ref{gDV}V}\label{gDVV}\\
\intertext{satisfying the conditions}
 V&\overset{\eqref{gDh}}{\in} \har \bigl(S_o\setminus \{o\}\bigr)\overset{\eqref{Bo}}{\subset} \har\bigl(B_{\circ}(o,r_o)\bigr)
\quad\text{with a number $r_o\in \RR_*^+$},
\tag{\ref{gDV}h}\label{gDhV}\\
v(x)&\overset{\eqref{gDVV}}{\leq} V(x)\overset{\eqref{gDVv}}{\leq}
 M_v^++ 2\frac{M_v^++m_v^-}{M_g} g_{D} (x,o)
	 \quad\text{for all $x\in  S\setminus S_o$,}
\tag{\ref{gDV}+}\label{gDhV+}\\
0& \leq V(x)\overset{\eqref{Mg+}}{\leq}
 2\frac{M_v^++m_v^-}{M_g} g_{D} (x,o)
\quad\text{for all $x\in  S_o\setminus \{o\}$,}
\tag{\ref{gDV}$_0^+$}\label{gDhV++}\\
V(x)&\overset{\eqref{gD0a}}{=}-2\frac{M_v^++m_v^-}{M_g} K_{d-2} (x,o)+O(1) \quad\text{as $o\neq x\to o$}.
\tag{\ref{gDV}o}\label{gD0aV}
\end{align}
\end{subequations}
\end{gluingtheorem}
\begin{proof} It is enough to apply Gluing Theorem \ref{gl:th3}
 with 
\begin{equation*}
O:=\mathcal O\setminus \clos S_o, 
\quad O_0:=\Int S\setminus \{o\}, \quad 
g:=g_D(\cdot,o), \quad  m_g:=0 
\end{equation*} 
in accordance with the reference marks indicated over relationships in 
 \eqref{vabS}--\eqref{gDV}.
\end{proof}

\begin{remark}\label{MgS} The choice of the domain $D$, and hence the constant $M_g$ in \eqref{Mg} and \eqref{gD0aV}, is entirely determined by the mutual arrangement of the sets $S_o\Subset S$.
\end{remark}

Given $S\subset \RR^d$ and $r\in \RR_*^+$, a set 
\begin{equation}\label{Scup}
S^{\cup r}\overset{\eqref{{B}B}}{:=}\bigcup\limits_{x\in S} B(x,r).
\end{equation}
is called a {\it outer $r$-parallel open set\/ {\rm \cite[Ch.~I,\S~4]{Santalo}}  for\/} $S$. 
Easy to install the following

\begin{proposition}\label{llemmaD} 
Let a subset $S_o\Subset \RR^d$
 be connected, and $r\in \RR_*^+$. Then $S_o^{\cup r}$, $S_o^{\cup (2r)}$,
$S_o^{\cup (3r)}$ are domains,  and there is a  Dirichlet regular domain $D_r\subset S_o^{\cup (3r)}$ such that
\begin{equation}\label{SDS}
S_o^{\cup r}\Subset D_r\Subset S_o^{\cup (2r)}. 
\end{equation}

\end{proposition}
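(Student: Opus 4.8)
The plan is to prove the two assertions in turn, writing $\rho:=\dist(\cdot,\clos S_o)$ for the ($1$-Lipschitz) distance function to the compact set $\clos S_o$, so that
$S_o^{\cup t}\overset{\eqref{Scup}}{=}\{y\in\RR^d\colon \rho(y)<t\}=\bigcup_{x\in\clos S_o}B(x,t)$ for every $t\in\RR_*^+$.

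First I would check that $S_o^{\cup r}$, $S_o^{\cup (2r)}$, $S_o^{\cup (3r)}$ are domains. Each is open, being a union of open balls, and bounded, since $\clos S_o$ is compact (as $S_o\Subset\RR^d$) and $\{\rho<t\}\subset\overline{B}(0,R+t)$ once $\clos S_o\subset\overline{B}(0,R)$. For connectedness I would use that the open $t$-neighborhood of a connected set is connected: if $\{\rho<t\}=U\sqcup V$ with $U,V$ open, disjoint and nonempty, then the connected set $\clos S_o$ is covered by the relatively open disjoint sets $U\cap\clos S_o$ and $V\cap\clos S_o$, so one is empty, say $\clos S_o\subset U$; for each $x\in\clos S_o$ the connected ball $B(x,t)\subset\{\rho<t\}$ contains $x\in U$, hence $B(x,t)\subset U$, and taking the union over $x$ gives $\{\rho<t\}\subset U$, i.e. $V=\varnothing$, a contradiction. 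Applying this with $t=r,2r,3r$ produces the three domains; moreover, since $\clos\{\rho<t\}\subset\{\rho\le t\}$ is compact and contained in $\{\rho<t'\}$ whenever $t<t'$, we also record the nestings $S_o^{\cup r}\Subset S_o^{\cup (2r)}\Subset S_o^{\cup (3r)}$.

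The substantive step is to produce the Dirichlet regular $D_r$. Here I would regularize $\rho$: fix a mollifier $\phi_\delta\ge 0$ supported in $\overline{B}(0,\delta)$ with $\delta:=r/4$ and $\int\phi_\delta=1$, and set $\psi:=\rho*\phi_\delta\in C^\infty(\RR^d)$, so that $|\psi-\rho|\le\delta$ everywhere and the sublevel sets $\{\psi\le c\}$ are compact. By Sard's theorem almost every $c$ is a regular value of $\psi$, and I would pick such a $c$ in the open interval $(5r/4,7r/4)=(r+\delta,\,2r-\delta)$. Then $\Sigma:=\{\psi=c\}$ is a $C^\infty$ hypersurface and $\partial\{\psi<c\}\subset\Sigma$. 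The bounds $\psi\le\rho+\delta$ and $\psi\ge\rho-\delta$ give, via $r+\delta<c$ and $c+\delta<2r$, the inclusions $\{\rho\le r\}\subset\{\psi<c\}$ and $\{\psi\le c\}\subset\{\rho<2r\}$, whence
\[
S_o^{\cup r}\ \Subset\ \{\psi<c\}\ \Subset\ S_o^{\cup (2r)}\ \subset\ S_o^{\cup (3r)} .
\]
I would then let $D_r$ be the connected component of $\{\psi<c\}$ containing the connected set $S_o^{\cup r}$; it is a bounded domain with $S_o^{\cup r}\Subset D_r\Subset S_o^{\cup (2r)}$.

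It remains to see that $D_r$ is Dirichlet regular, which I expect to be the main obstacle. Since open subsets of $\RR^d$ are locally connected, $D_r$ is open and $\partial D_r\subset\partial\{\psi<c\}\subset\Sigma$; near any boundary point $y$ the smooth hypersurface $\Sigma$ cuts a small ball into two pieces, and the connected inner piece meets $D_r$ and hence lies in $D_r$, so locally $D_r$ coincides with the $C^\infty$-smooth domain $\{\psi<c\}$. A $C^2$ boundary point satisfies Poincar\'e's exterior ball condition, a classical sufficient condition for regularity of the Dirichlet problem; thus every point of $\partial D_r$ is regular and $D_r$ is Dirichlet regular, which completes the argument. (An alternative avoiding Sard is to take $D_r:=\bigcup_{i=1}^N B(x_i,3r/2)$ for a fine finite net $\{x_i\}\subset\clos S_o$ and invoke the exterior cone condition at each boundary point; this works but forces one to control tangencies between the finitely many equal spheres, which the smooth level-set construction sidesteps.)
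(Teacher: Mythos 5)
Your proof is correct, and in fact the paper gives no proof at all to compare against: Proposition \ref{llemmaD} is introduced there with the words ``Easy to install the following'' and left as an exercise, so your argument supplies exactly the details the authors omitted. The three parts of your write-up are each sound: the identification $S_o^{\cup t}=\{\dist(\cdot,\clos S_o)<t\}$ together with the ball-by-ball connectedness argument settles that the parallel sets are bounded domains; the mollification $\psi=\rho*\phi_\delta$ with $\delta=r/4$, Sard's theorem, and the choice of a regular value $c\in(r+\delta,2r-\delta)$ give a smooth level set pinched strictly between $S_o^{\cup r}$ and $S_o^{\cup(2r)}$; and taking $D_r$ to be the component of $\{\psi<c\}$ containing the connected set $S_o^{\cup r}$, noting $\partial D_r\subset\{\psi=c\}$ and that $D_r$ locally coincides with $\{\psi<c\}$ near its boundary, reduces Dirichlet regularity to the classical exterior-ball (Poincar\'e) criterion at $C^2$ boundary points. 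The two places where care was genuinely needed --- that the closure $\clos S_o^{\cup r}$ lies in the \emph{same} component $D_r$ (it does, being connected and meeting $D_r$), and that the exterior tangent ball is disjoint from $D_r$ globally and not merely locally (it is, since it sits inside $\{\psi>c\}$ while $D_r\subset\{\psi<c\}$) --- are both handled correctly. Your parenthetical alternative via a finite union of balls would also work but, as you note, requires fussing over sphere tangencies, which the level-set construction avoids; the version you wrote out is the cleaner one.
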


For $v\in L^1\bigl(\partial B(x,r)\bigr)$, we define the averaging value of $v$ at the point $x$ on the sphere $\partial B(x,r)$ as
\begin{equation}\label{v0}
v^{\circ r}(x)\overset{\eqref{sd-1}}{:=}\frac{1}{s_{d-1}}\int_{\partial \BB} v (x+rs)\dd \sigma_{d-1}(s),
\end{equation}
 where $\sigma_{d-1}$   measure of surface area on the unit sphere $\partial \BB$.

\begin{gluingtheorem}\label{gl:th_es}
Let $\mathcal O\subset \RR^d$ be an open subset, and  $S_o\subset \RR^d$ be a connected set such that there is a point
\begin{equation}\label{S0}
 o\overset{\eqref{x0S}}{\in} \Int  S_o\subset S_o \Subset \mathcal O.
\end{equation}
Let  $r\in \RR^+$ be a  number such that
\begin{equation}\label{posr}
0<3r<\dist(S_o, \partial \mathcal O),
\end{equation}
and $D_r$ be a domain from Proposition\/ {\rm \ref{llemmaD}} satisfying \eqref{SDS}.
Let $v \in \sbh_*(\mathcal O\setminus S_o)$ be a function satisfying constraints from above and below  in the form
\begin{subequations}\label{avv}
\begin{align}  
v&\leq M_v <+\infty \quad\text{on $S_o^{\cup (3r)}\setminus S_o$},
\tag{\ref{avv}M}\label{avvM}
\\
m_v&:=\inf \bigl\{ v^{\circ r}(x)\colon x\in S_o^{\cup (2r)}\setminus S_o^{\cup (r)}\bigr\}.
\tag{\ref{avv}m}\label{avvm}
\end{align}
\end{subequations}
Then $m_v>-\infty $, and   there is a function $V\in \sbh_*(\mathcal{O}\setminus \{o\}) $ satisfying \eqref{gDhV}--\eqref{gDhV+}, i.\,e., 
\begin{subequations}\label{VK}
\begin{align}
0<V&\in \har^+\bigl(S_o\setminus \{o\}\bigr) \quad\text{on $S_o\setminus \{o\}$},
\tag{\ref{VK}h}\label{gDVh}\\
V&=v \quad \text{on $\mathcal O\setminus S_o^{\cup (3r)}$},
\tag{\ref{VK}=}\label{gDV=}\\
v(x)\leq V(x)&\leq M_v^++2\frac{M_v^++m_v^-}{M_g} g_{D_r}(x,o) \quad \text{for all $x\in  S^{\cup (3r)}\setminus S_o$},
\tag{\ref{VK}+}\label{gDVleq}
\\
0< V(x)&\leq 2\frac{M_v^++m_v^-}{M_g} g_{D_r}(x,o) \quad \text{for all $x\in  S_o\setminus \{o\}$},
\tag{\ref{VK}$_0^+$}\label{gDVleq+}
\\
\intertext{and such that}
V(x)&\overset{\eqref{gD0aV}}{=}- 2\frac{M_v^++m_v^-}{M_g}K_{d-2} (x,o)+O(1) \quad\text{as\quad  $o\neq x\to o$}
\tag{\ref{VK}o}\label{gDVo},\\
\intertext{where}
0<M_g&:=\inf_{x\in \partial S_o^{\cup r}} g_{D_r} (x,o)=
\const^+_{o, S_o, r,{D_r}}=\const^+_{o,S_o,r}
\tag{\ref{VK}g}\label{gDVg}
\end{align}
\end{subequations}

\end{gluingtheorem}
\begin{proof}
 We have $m_v>-\infty$ since the function  
$v^{\circ r}$  is continuous in $ S_o^{\cup (2r)}\setminus S_o^{\cup r}$
 \cite[Theorem 1.14]{Helms}. 
The function  $v$ can be transformed using the Perron\,--\,Wiener\,--\,Brelot method (into the open ``layer'' $S_o^{\cup (3r)}\setminus \clos S_o$ from 
boundary of this layer) to a new \textit{subharmonic function $\widetilde v\geq v$ on} $\mathcal O \setminus S_o$ such that $\widetilde v\in \har (S_o^{\cup (3r)}\setminus \clos S_o)$ and 
$\widetilde v=v$ on $\mathcal O\setminus S_o^{\cup (3r)}$.
It follows from the principle of subordination (domination) for harmonic continuations
and  the  maximum principle that 
\begin{equation}\label{estwv}
-\infty <m_v\leq \widetilde v\quad\text{on $S_o^{\cup (2r)}\setminus S_o^{\cup r}$}, \quad
\quad \widetilde v\leq M_v\quad\text{on $S_o^{\cup (3r)}\setminus S_o$}.
\end{equation}
If we choose in Gluing  Theorem \ref{gl:th4} for the role a set $S_o$ the set $S_o^{\cup r}$, and instead of $S$ the set $S_o^{\cup (2r)}$, then, by construction \eqref{gDVv}--\eqref{gDVV} and conditions \eqref{gDhV}--\eqref{gD0aV} , we get 
series of conclusions \eqref{VK} of Theorem \ref{gl:th_es} in view of \eqref{estwv} since 
\eqref{estwv} gives \eqref{vabS} for $\widetilde v$ instead of $v$.

The possibility of replacing a constant $\const^+_{o, S_o, r,{D_r}}$ with $\const^+_{o,S_o,r}$ depending only on $o,S_o,r$  in\eqref{gDVg} follows from the 
Remark \ref{MgS}. 
\end{proof}

\section{Linear  balayage of charges and measures}\label{Ssec_balm}
\setcounter{equation}{0}

In this section \ref{Ssec_balm} we discuss conventional linear  balayage that is particular case of  \eqref{b0}. Next, we call linear balayage simply balayage.

\begin{definition}\label{df:1} Let $\vartheta, \mu  \in \Meas(S)$, $S\subset \Borel (\RR^d_{\infty})$.  
 Let $H\subset \RR_{\pm\infty}^{S}$ be a class of Borel-me\-a\-s\-u\-r\-a\-ble functions on $S$. 
 Let us assume that the integrals $\int h \dd{\vartheta}$ and  $\int h \dd{\mu}$ are well defined with values in $\RR_{\pm\infty}$ for each function $h\in H$. We write ${\vartheta} \preceq_H \mu$ and say that the charge  $\mu$ is a {\it balayage,\/} or, sweeping (out), of the charge ${\vartheta}$ {\it for\/} $H$, or, briefly, $\mu$ is a  $H$-balayage of $\vartheta$,   if 
\begin{equation}\label{balnumu}
\int h \dd {\vartheta} \overset{\eqref{b0}}{\leq} \int h\dd \mu \quad\text{\it for all\/ $h\in H$.}
\end{equation} 
The relation $\preceq_H$ is a \textit{preorder}  on a part of $\mathcal M_H\subset \Meas(S)$, where the integrals $\int h\dd \mu$ are well defined for all $\mu \in \mathcal M_H$. If integrals $\int h\dd \mu$ are finite for each $h\in H$ and $\mu \in \mathcal M_H$, and  
\begin{equation}\label{H}
H=-H,
\end{equation} 
then this relation is \textit{symmetric} on $\mathcal M_H$, i.\,e., the inequality in  \eqref{balnumu} becomes the equality
\begin{equation}\label{balnumu=}
\int h \dd {\vartheta} \overset{\eqref{H}}{=} \int h\dd \mu \quad\text{\it for all\/ $h\in H$.}
\end{equation}
\end{definition}

 In this article, we consider only balayage  for 
$H\subset \sbh(S)\subset \RR_{-\infty}^S$.
In this case,  the integrals from \eqref{balnumu} are well  defined for all measures 
${\vartheta}, \mu \in \Meas_{\comp}^+(S)$ with values in $\RR_{-\infty}$,
and for all  absolutely continuous with respect to the  Lebesgue measure $\lambda_d$ charges  
${\vartheta},\mu\overset{\eqref{MLl}}{\in} L_{\loc}^1(S)\dd \lambda_d$ with values in $\RR$, etc.

\begin{proposition}\label{Prtr}  Let  $O\subset \RR^d$ be an open set, $\mu \in \Meas(O)$ be a  $H$-balayage of ${\vartheta}\in  \Meas(O)$, $O'\subset \RR^d$ be an open set, and $H'\subset \RR_{\pm\infty}^{O'}$. 
\begin{enumerate}[{\rm 1.}]
\item\label{b1} If  $1\in H$, then ${\vartheta} (O)\leq \mu(O)$. 
\item\label{b2}  If $\pm 1\in H$, then ${\vartheta} (O)= \mu(O)$.
\item\label{b3} If $H'\subset H$, then  $\mu$  is a $H'$-balayage of ${\vartheta}$.
 \item If  $O'\subset O$  and $supp \vartheta\cup \supp \mu\subset O'$, then $\mu\bigm|_{O'}$  is a balayage of ${\vartheta}\bigm|_{O'}$ for  $H\bigm|_{O'}$.
 \end{enumerate}
\end{proposition}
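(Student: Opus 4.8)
The plan is to derive all four assertions directly from the defining balayage inequality \eqref{balnumu} of Definition \ref{df:1} by testing it against suitably chosen functions $h$; no tool beyond the definition itself is required.

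For part \ref{b1}, I would take $h:=1\in H$ in \eqref{balnumu}. Since $\int 1\dd\vartheta=\vartheta(O)$ and $\int 1\dd\mu=\mu(O)$, the inequality $\int 1\dd\vartheta\leq \int 1\dd\mu$ becomes exactly $\vartheta(O)\leq\mu(O)$. For part \ref{b2}, I would in addition test \eqref{balnumu} against $h:=-1\in H$, obtaining $-\vartheta(O)\leq -\mu(O)$, i.e.\ $\mu(O)\leq\vartheta(O)$; together with part \ref{b1} this gives the equality $\vartheta(O)=\mu(O)$.

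For part \ref{b3}, the point is simply that the inequality in \eqref{balnumu} holds for every $h\in H$ by hypothesis, hence a fortiori for every $h$ in the smaller class $H'\subset H$; by Definition \ref{df:1} this is exactly the statement that $\mu$ is an $H'$-balayage of $\vartheta$. One checks only that the integrals $\int h\dd\vartheta$, $\int h\dd\mu$ stay well defined for $h\in H'$, which is automatic from $H'\subset H$.

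The last part is equally immediate. Since $\supp\vartheta\cup\supp\mu\subset O'$, both charges are carried by $O'$, so for each $h\in H$ we have $\int_{O'} h\bigm|_{O'}\dd\bigl(\vartheta\bigm|_{O'}\bigr)=\int_O h\dd\vartheta$ and likewise for $\mu$, while $h\bigm|_{O'}$ ranges over $H\bigm|_{O'}$ as $h$ ranges over $H$. Hence \eqref{balnumu} on $O$ transfers verbatim to the restricted charges tested against $H\bigm|_{O'}$, which is the asserted balayage relation on $O'$. The only thing needing (trivial) care is the support bookkeeping that makes these integrals coincide; there is thus no genuine obstacle, the whole proposition being a routine unwinding of the definition.
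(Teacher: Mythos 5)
Your proof is correct and is precisely the routine unwinding of Definition \ref{df:1} that the paper has in mind: the paper itself offers no argument, stating only that all four assertions are obvious. Your explicit verification (testing against $\pm 1$, monotonicity in $H$, and the support bookkeeping for restriction) fills in exactly those details and nothing more.
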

All statements of Proposition \ref{Prtr} are obvious.

\begin{remark} Balayage of  charges and measures with a non-compact support is also occur frequently and are used  in Analysis. So, a bounded domain $D\subset \RR^d$ is called a {\it quadrature domain\/} (for harmonic functions)  if there is a charge $\mu \in \Meas_{\comp} (D)$ such that the restriction $\lambda_d\bigm|_D$  is a balayage of $\mu$ for the class $\har (D)\cap L^1(D)$. In connection with the quadrature domains, see very informative overview  \cite[3]{quad} and bibliography in it.
\end{remark}

\begin{proposition}\label{pr:diff} If $\mu \in \Meas^+_{\comp}(O)$   is a balayage of $\vartheta\in \Meas_{\comp}^+(O)$ for $\sbh(O)\cap C^{\infty}(O)$, then  $\mu$   is a balayage of $\vartheta$ for $\sbh(O)$. 
 \end{proposition}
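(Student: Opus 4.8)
The plan is to approximate an arbitrary $u\in\sbh(O)$ from above by \emph{smooth} subharmonic functions defined on \emph{all} of $O$, and then to pass to the limit, exploiting that $\vartheta$ and $\mu$ have compact support. Fix $u\in\sbh(O)$, the case $u\equiv\boldsymbol{-\infty}$ being trivial. For $\varepsilon>0$ write $O_\varepsilon:=\{x\in O\colon \overline B(x,\varepsilon)\Subset O\}$ and let $u_\varepsilon:=u*\phi_\varepsilon$ be the mollification by a radial $\phi_\varepsilon\in C^\infty$, $\phi_\varepsilon\geq 0$, $\supp\phi_\varepsilon\subset B(0,\varepsilon)$, $\int\phi_\varepsilon\dd\lambda_d=1$. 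Then $u_\varepsilon\in\sbh(O_\varepsilon)\cap C^\infty(O_\varepsilon)$ and, by the sub-mean value property, $u_\varepsilon\searrow u$ pointwise on $O$ as $\varepsilon\to 0$. Since $K:=\supp\vartheta\cup\supp\mu\Subset O$, there is $\varepsilon_0>0$ with $K\Subset O_{\varepsilon_0}$. The key reduction is this: \emph{if} for each small $\varepsilon$ I can produce a function $w_\varepsilon\in\sbh(O)\cap C^\infty(O)$ with $w_\varepsilon=u_\varepsilon$ on a neighbourhood of $K$, then the hypothesis applied to $w_\varepsilon$ yields $\int u_\varepsilon\dd\vartheta=\int w_\varepsilon\dd\vartheta\leq\int w_\varepsilon\dd\mu=\int u_\varepsilon\dd\mu$. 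Letting $\varepsilon\to 0$ and applying the monotone convergence theorem to the decreasing sequences $u_\varepsilon\big|_K$ (whose first integrals are finite because $u_{\varepsilon_0}$ is bounded on the compact set $K$) then gives $\int u\dd\vartheta\leq\int u\dd\mu$, which is exactly $\vartheta\preceq_{\sbh(O)}\mu$. So everything reduces to the construction of $w_\varepsilon$.

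To build $w_\varepsilon$ I would glue $u_\varepsilon$, available on $O_\varepsilon$, with a smooth subharmonic \emph{barrier} $\psi$ on $O$ by means of a smoothed maximum. Choose a relatively compact open $\Omega$ with $K\Subset\Omega\Subset O_\varepsilon$, and a $\psi\in\sbh(O)\cap C^\infty(O)$ with $\psi<u_\varepsilon$ on a neighbourhood of $K$ and $\psi>u_\varepsilon$ near $\partial\Omega$. Concretely, for $d\geq 2$ one may take $\psi=A\bigl(|x|^2+\sum_i K_{d-2}^{\,\vee}(x,p_i)\bigr)+B$, where $K_{d-2}^{\,\vee}(x,p_i):=-K_{d-2}(x,p_i)$ is the (harmonic, hence subharmonic and smooth on $O$) Newtonian kernel with one pole $p_i$ placed in each bounded hole of $O$; such $\psi$ tends to $+\infty$ toward the boundary parts bounding $K$, so after scaling $A$ and shifting $B$ it lies below $u_\varepsilon$ on $K$ while exceeding it near $\partial\Omega$ (the case $d=1$ is analogous and simpler, with convex barriers). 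Now put $w_\varepsilon:=\max_\eta\{u_\varepsilon,\psi\}$ on $\Omega$ and $w_\varepsilon:=\psi$ on $O\setminus\Omega$, where $\max_\eta$ is the regularized (smoothed) maximum with $\eta$ smaller than the gaps fixed above. Since $\max_\eta\{u_\varepsilon,\psi\}\geq\psi$ and equals $\psi$ near $\partial\Omega$, Gluing Theorem \ref{gl:th2} (applied with the pair $\psi\in\sbh(O)$ and $\max_\eta\{u_\varepsilon,\psi\}\in\sbh(\Omega)$, for which \eqref{g010} is vacuous as $\Omega\Subset O$ and \eqref{g011} holds with equality) shows $w_\varepsilon$ is subharmonic on $O$; it is smooth because $\max_\eta$ is smooth, and it equals $u_\varepsilon$ wherever $u_\varepsilon>\psi+\eta$, in particular on a neighbourhood of $K$. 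Thus $w_\varepsilon\in\sbh(O)\cap C^\infty(O)$ with $w_\varepsilon=u_\varepsilon$ near $K$, as required.

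The monotonicity $u_\varepsilon\searrow u$ and the final limiting argument are routine. \textbf{The main obstacle} is the global extension step: mollification only yields smooth subharmonic functions on the shrunken set $O_\varepsilon$, whereas the hypothesis supplies test functions subharmonic on \emph{all} of $O$, so $u_\varepsilon$ must be re-extended to $O$ without being disturbed near $K$. The delicate ingredient here is the barrier $\psi$, which must simultaneously be smooth and subharmonic on the whole of $O$, lie below $u_\varepsilon$ on $K$, and exceed $u_\varepsilon$ near $\partial\Omega$; securing all three at once forces one to respect the topology of $O$ through the placement of the poles $p_i$ in its holes. Once $\psi$ is in hand, the smoothed gluing through Gluing Theorem \ref{gl:th2} and the regularized maximum is standard.
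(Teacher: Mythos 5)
Your overall strategy (mollify, extend to a globally smooth subharmonic function agreeing with $u_\varepsilon$ near $K:=\supp\vartheta\cup\supp\mu$, apply the hypothesis, pass to the limit by monotone convergence) is the natural ``by hand'' version of what the paper does: the paper's entire proof is a one-line citation of the Approximation Theorem in \cite[Ch.~4, 10]{Doob}, which supplies precisely the global smooth monotone approximation you are trying to manufacture, after which your reduction and limiting steps are correct and routine. The genuine gap is exactly where you flagged ``the main obstacle'': the existence of the barrier $\psi$. Your concrete recipe $\psi=A\bigl(|x|^2+\sum_i(-K_{d-2}(x,p_i))\bigr)+B$ with one pole in each bounded hole of $O$ does not do what you claim. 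First, $-K_{d-2}(\cdot,p_i)$ tends to $+\infty$ only at the point $p_i$ itself, not along the boundary component of $O$ surrounding it, so the assertion that $\psi$ ``tends to $+\infty$ toward the boundary parts bounding $K$'' fails whenever a hole is not a single point. Second, and decisively, already for $d=2$ take $O=\{1<|x|<4\}$, $K=\{|x|=2\}$ (with, say, $\vartheta=\mu$ equal to arc length on $K$, so the hypothesis of the proposition holds), and the test function $u=-\ln|x|\in\har(O)\subset\sbh(O)$, for which $u_\varepsilon=u$ near $K$. Whatever the pole $p_1\in\overline B(0,1)$, set $\Phi:=|x|^2-\ln|x-p_1|$ and $G:=u_\varepsilon-A\Phi$; along the radial segment $x_t:=-t\,p_1/|p_1|$, $1<t\le 2$ (or along any radius if $p_1=0$), one computes $\frac{d}{dt}G(x_t)=-\tfrac1t-2At+\tfrac{A}{t+|p_1|}\le-\tfrac1t<0$ for every $A\ge 0$, so $G$ strictly increases as one moves from $K$ toward the inner boundary. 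Since any open $\Omega$ with $K\Subset\Omega\Subset O$ must have a boundary point $x_{t^*}$ on such a segment with $t^*<2$, we get $\sup_{\partial\Omega}G\ge G(x_{t^*})>G(x_2)\ge\inf_K G$, whereas your gluing needs a constant $B$ with $\sup_{\partial\Omega}G<B<\inf_K G$. So no choice of $A$, $B$, pole position, or admissible $\Omega$ makes your construction work in this example.

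A barrier does exist there --- for instance $\psi=c(\ln|x|-\ln 2)^2+b$, which is smooth and subharmonic on the annulus --- but it lies outside your family, and this is symptomatic of the real difficulty: the barrier must be adapted both to $u_\varepsilon$ and to the position of $K$ inside $O$ (in general to the inward-filled hull $\inhull_O K$, which your argument never invokes, although it is exactly the obstruction to subharmonic extension used by the paper in Propositions \ref{Pr:munuh} and \ref{pr:4}). Producing, for arbitrary $O$, $K$ and $u$, a function that is smooth and subharmonic on \emph{all} of $O$, lies below $u_\varepsilon$ near $K$ and above $u_\varepsilon$ on the boundary of some neighbourhood of $K$, is a Runge/Walsh-type separation statement of essentially the same depth as the Doob and Gardiner theorems the paper relies on; any naive construction runs into the same circularity you identified (smoothing is only available on shrunken domains). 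Thus your proof reduces the proposition to an unproved lemma, and the explicit construction offered for that lemma is refuted by the annulus example; the remaining ingredients (mollification, monotonicity $u_\varepsilon\searrow u$, the regularized maximum, Gluing Theorem \ref{gl:th2}, and the final monotone convergence) are all sound.
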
 
Proposition \ref{pr:diff} follows from \cite[Ch. 4, 10, Approximation Theorem]{Doob}.

\begin{example}[{\rm \cite[3]{Gamelin}, \cite{Koosis}, \cite{Koosis96}, \cite{C-R}, \cite{C-RJ}, \cite{Schachermayer}, \cite{HN}, \cite{Kha91}--\cite{KhaKha19}, \cite{KhaTalKha15}, \cite{BaiTalKha16}}]\label{sbhJ} 
If a measure $\mu\in \Meas_{\comp}^+(O)$  is  a balayage of the Dirac measure $\delta_{x}$ for $\sbh(O)$, where $x\in O$, then this measure $\mu$ is called a {\it Jensen measure for\/} $x$. The class of such measures is denoted by $J_x(O)$. 

\end{example}

\begin{example}\label{sbhom} 
 We denote by   $\omega_D\colon D\times{\Borel}(\partial D)\to [0,1]$ the \textit{harmonic measure for\/} $D$ with non-polar boundary $\partial D\subset \RR_{\infty}^d$. Measures $\omega_D(x,\cdot)\overset{\eqref{m1}}{\in} \Meas^{1+}(\partial D)$, $x\in D$, will also be called a harmonic measure  for $D$, but with specification,  \textit{at\/} $x\in D$.  If $D\Subset O$, then measures 
\begin{equation}\label{om}
a\delta_x+b\omega_D(x,\cdot)\in J_x(O), \quad a,b\in \RR^+, \quad a+b=1.
\end{equation}
Likewise, if \begin{equation*}
\sum_{k\in \NN}b_k=1,\quad  b_k \in \RR^+, \quad D_k, D:=\bigcup_kD_k\Subset O
\quad\text{ are Greenian, then $\sum_k b_k\omega_{D_k}(x,\cdot)\in J_x(O)$.}  
\end{equation*}
So, the surface measure  $\sigma_{d-1}$ in the unit sphere $\partial \BB$ belongs to  $J_0(r\BB)$ for each  $r>1$. 
\end{example}

\begin{example}\label{aJ} 
Useful examples of Jensen measures from $J_0(B(r))$ are \textit{probability\/} measures
\begin{equation}\label{eps}
 \alpha_r^{\infty}\overset{\eqref{MLl}}{\in} \left(C_0^{\infty}(r\BB)\right)^+\dd \lambda_d\overset{\eqref{m1}}{\in} \Meas^{1+}\bigl(B(r)\bigr),\; \alpha_r^{\infty}(S)=\alpha_1^{\infty}( S/r), \; S\in \Borel(\RR^d), 
\end{equation}
$ r\in \RR_*^+$,
invariant under the action of the orthogonal group on $\RR^d$.
\end{example}

\begin{example}[{\rm \cite[3]{Gamelin}, \cite{Kha96}, \cite{Kha03}, \cite{Kha01II}, \cite{Kha07}, \cite{KudKha09}}]\label{sbhAS} Let $x\in O$.   If $\mu\in \Meas_{\comp}^+(O)$ is a balayage of $\delta_{x}$ for $\har(O)$, then such measure $\mu$ is called an {\it Arens\,--\,Singer  measure for\/} $x$. The class of such measures is denoted by $AS_x(O)\supset J_x(O)$. Arens\,--\,Singer measures are often referred to as representing measures.
\end{example}

\begin{proposition}\label{pr:ii} For  $H\subset \sbh(O)$ (resp.,
$H =-H\subset \har(O)$), let a measure $\mu\in \Meas_{\comp}(O)$ be a  balayage of a measure  $\vartheta\in \Meas_{\comp}$ for $H$. 

Let $\iota_x\in J_x(O)\overset{\eqref{m1}}{\subset} \Meas^{1+}_{\comp}(O) $ 
(resp., $\iota_x\in AS_x(O)\subset \Meas^{1+}_{\comp}(O) $)
with 
\begin{equation}\label{sxi}
s_x:=\supp \iota_x\Subset B\Bigl (x,\frac{1}{2}\dist (\supp \mu, \partial O)\Bigr) \quad\text{for all $x\in \supp \mu$}
\end{equation} 
be a family Jensen  (resp., Arens\,--\,Singer) measures for points $x\in O$. The measure $\mu$  and probability measures $\iota_x$ are bounded  in aggregate, and we can to define the integral  of $\iota_x$ with respect to $\mu$ {\rm \cite{Landkoff}, \cite{Bourbaki}}
\begin{equation}\label{iimu}
\beta:=\int \iota_x\dd\mu(x)\colon h\longmapsto \int\Bigl(\int h\dd\iota_x\Bigr ) \dd \mu(x).
\end{equation}   
In particular, if every Jensen (resp., Arens\,--\,Singer) measure $\iota_x$ is a parallel shift to a point $x\in O$ of the same Jensen (resp., Arens\,--\,Singer) measure $\iota_0 $ for $0$ with the diameter $\diam \supp \iota_0$ of $\supp \iota_0$ fewer than
$\frac{1}{2}\dist (\supp \mu, \partial O)\bigr)$, then  integral $\beta$ from \eqref{iimu} 
is a classical  convolution $\beta =\iota_0*\mu$ of two measures $\iota_0$ and $\mu$:
\begin{equation}\label{sv}
\beta:=\iota_0 \ast \mu =\mu\ast \iota_0 \colon h\longmapsto \iint h (x+y) \dd \iota_0 \dd \mu,
\quad s_x\overset{\eqref{sxi}}{=}x+\supp \iota_0.
\end{equation}
In these cases both  measures $\beta$ from \eqref{iimu}--\eqref{sv} also a balayage of $\vartheta$ for $H$  with 
\begin{equation*}
\supp \beta\subset  \clos \Bigl((\supp \mu) \bigcup \bigcup_{x\in \supp \mu}s_x\Bigr)
\Subset O. 
\end{equation*}
\end{proposition}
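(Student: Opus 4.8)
The plan is to sandwich $\beta$ between $\vartheta$ and $\mu$: since $\mu$ is already a balayage of $\vartheta$, giving $\int h\dd\vartheta\le\int h\dd\mu$ for $h\in H$, it suffices to compare $\int h\dd\mu$ with $\int h\dd\beta$ using the defining property of the inner measures $\iota_x$. First I would dispose of the bookkeeping. Each $\iota_x\in\Meas^{1+}(s_x)$ is a probability measure whose support $s_x$ lies, by \eqref{sxi}, within distance $\tfrac12\dist(\supp\mu,\partial O)$ of $\supp\mu$; hence $K:=\clos\bigl(\bigcup_{x\in\supp\mu}s_x\bigr)$ is a compact subset of $O$, and $\int\bigl|\int h\dd\iota_x\bigr|\dd|\mu|(x)\le\bigl(\sup_K|h|\bigr)\,|\mu|(O)<+\infty$ for $h\in C(O)$. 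This ``boundedness in aggregate'' shows that the functional \eqref{iimu} is a bounded, compactly supported Radon measure $\beta$; and if $h$ vanishes on $K$ then $\int h\dd\iota_x=0$ for every $x\in\supp\mu$, whence $\supp\beta\subset K\subset\clos\bigl((\supp\mu)\cup\bigcup_x s_x\bigr)\Subset O$, the asserted support bound.

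Next I would establish the balayage inequality. Writing $\widehat h(x):=\int h\dd\iota_x$, the definition \eqref{iimu} reads $\int h\dd\beta=\int\widehat h\dd\mu$. In the subharmonic case $H\subset\sbh(O)$, the Jensen property of $\iota_x\in J_x(O)$ (Example \ref{sbhJ}, $\iota_x$ a balayage of $\delta_x$ for $\sbh(O)$) gives $\widehat h(x)\ge h(x)$ for every $x$; integrating this pointwise inequality against the positive measure $\mu$ and invoking $\vartheta\preceq_H\mu$ yields
\begin{equation*}
\int h\dd\vartheta\le\int h\dd\mu=\int h(x)\dd\mu(x)\le\int\widehat h(x)\dd\mu(x)=\int h\dd\beta,
\end{equation*}
so $\vartheta\preceq_H\beta$. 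In the harmonic case $H=-H\subset\har(O)$, the Arens\,--\,Singer property of $\iota_x\in AS_x(O)$ gives the \emph{equality} $\widehat h(x)=h(x)$ for all $h\in H$, while $H=-H$ turns $\vartheta\preceq_H\mu$ into the equality \eqref{balnumu=}; hence $\int h\dd\beta=\int h\dd\mu=\int h\dd\vartheta$ for every $h\in H$, and $\beta$ is again a (symmetric) balayage of $\vartheta$ for $H$. I would stress that positivity of $\mu$ enters only in the subharmonic case, where the inequality $\widehat h\ge h$ has to survive integration.

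The convolution special case is then a direct substitution. If $\iota_x=\delta_x\ast\iota_0$ is the parallel shift of a single Jensen (resp. Arens\,--\,Singer) measure $\iota_0$ for $0$, so that $s_x=x+\supp\iota_0$ as in \eqref{sv}, then $\widehat h(x)=\int h(x+y)\dd\iota_0(y)$, and
\begin{equation*}
\int h\dd\beta=\iint h(x+y)\dd\iota_0(y)\dd\mu(x)=\int h\dd(\iota_0\ast\mu),
\end{equation*}
so $\beta=\iota_0\ast\mu=\mu\ast\iota_0$; the hypothesis $\diam\supp\iota_0<\tfrac12\dist(\supp\mu,\partial O)$ is exactly what makes \eqref{sxi} hold with $s_x\Subset B\bigl(x,\tfrac12\dist(\supp\mu,\partial O)\bigr)$.

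The step I expect to cost the most effort is justifying the identity $\int h\dd\beta=\int\widehat h\dd\mu$ and the measurability of $x\mapsto\widehat h(x)$ when $h\in\sbh(O)$ is merely upper semicontinuous and may equal $-\infty$; for the harmonic case $h$ is continuous and nothing is needed. My remedy would be to fix a decreasing sequence $(h_n)\subset C(O)$ with $h_n\downarrow h$ on a neighborhood of $K$. For each continuous $h_n$ the map $x\mapsto\int h_n\dd\iota_x$ is measurable, bounded above by $\sup_K h_1$, and $\int h_n\dd\beta=\int(\int h_n\dd\iota_x)\dd\mu(x)$ holds by definition; monotone convergence (inner integrals decrease to $\widehat h(x)$, whence $\widehat h$ is measurable, and the $\mu$-integrals decrease with an integrable majorant) transfers both the identity and the Jensen inequality to the limit, closing the argument.
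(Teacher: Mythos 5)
Your proposal is correct and takes essentially the same route as the paper's own proof: the sandwich chain $\int h\dd\vartheta \leq \int h\dd\mu \leq \int\bigl(\int h\dd\iota_x\bigr)\dd\mu(x) = \int h\dd\beta$, using the Jensen (resp.\ Arens\,--\,Singer) property of each $\iota_x$ pointwise and the positivity of $\mu$ to integrate it, with all inequalities becoming equalities in the harmonic case $H=-H$. Your additional verifications (the support bound for $\beta$, the convolution identity, and the monotone-convergence argument extending $\int h\dd\beta=\int\widehat h\dd\mu$ from continuous to upper semicontinuous $h$) only make explicit what the paper leaves implicit in its one-line chain \eqref{ins}.
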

\begin{proof} Under condition \eqref{sxi}, for subharmonic function $h\in H\subset \sbh(O)$, we have
\begin{equation}\label{ins}
\int h\dd \vartheta \leq \int h\dd \mu \leq 
\int_{\supp \mu }\Bigl(\int_{s_x}h \dd \iota_x\Bigr)\dd \mu (x)
\overset{\eqref{iimu}}{=}\int h \dd \beta 
\end{equation}
by definitions  \eqref{iimu}--\eqref{sv}. For $H\subset \har (O)$ and $\iota_x\in AS_x$, by analogy with \eqref{ins}, we have equalities in \eqref{ins}.
\end{proof}
\begin{remark}\label{remgl}
If we choose parallel shifts to $x$ of measures (Example \ref{aJ}, \eqref{eps}) 
$$\alpha_{r(x)}^{\infty}
\in \left(C_0^{\infty}(r(x)\BB)\right)^+\dd \lambda_d\in \Meas^{1+}B(r(x))
$$ as measures $\iota_x$ for  Proposition  \ref{pr:ii} with a function $r\in C^{\infty}(O)$ and with condition  \eqref{sxi}, then both measures $\beta$ from   \eqref{iimu}--\eqref{sv}  belong to the class $C_0^{\infty}(O)\dd \lambda_d$ and still $\vartheta \preceq_{H} \beta $, i.e., 
the measure $\beta \in \Meas_{\comp}(O) $ is a balayage of the measure $\vartheta$ for $H$.
\end{remark}

\begin{proposition}\label{Pr:munuh}  Let  $\mu \in \Meas_{\comp}(O)$ be a  balayage of ${\vartheta}\in  \Meas_{\comp}(O)$ for $\har (O)$.  Then 
\begin{equation}\label{bh}
\int h \dd {\vartheta}=\int h\dd \mu \quad \text{for every $h \in  \har \bigl({\inhull_O (\supp \mu \cup \supp {\vartheta}})\bigr)$}.
\end{equation}
\end{proposition}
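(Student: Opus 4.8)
The plan is to first record the desired identity on \emph{globally} harmonic test functions and then to spread it to $K:=\inhull_O(\supp\mu\cup\supp{\vartheta})$ by harmonic approximation. Since $\har(O)$ is a real vector space we have $\har(O)=-\har(O)$, and the charges ${\vartheta},\mu$ are compactly supported in $O$, so $\int h\dd{\vartheta}$ and $\int h\dd\mu$ are finite for every $h\in\har(O)$ (harmonic functions are continuous, hence bounded on $\supp{\vartheta}\cup\supp\mu$). Thus the balayage hypothesis ${\vartheta}\preceq_{\har(O)}\mu$ is in fact \emph{symmetric} in the sense of \eqref{H}--\eqref{balnumu=}, which gives
\[
\int h\dd{\vartheta}=\int h\dd\mu\qquad\text{for all }h\in\har(O).
\]
The whole point of \eqref{bh} is then to enlarge this class from functions harmonic on all of $O$ to functions merely harmonic on a neighbourhood of $K$.

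Second, I would fix $K=\inhull_O(\supp\mu\cup\supp{\vartheta})$ and extract the two topological facts that make approximation possible. By Proposition~\ref{KOc}\eqref{Ki} the set $K$ is a compact subset of $O$, while by Proposition~\ref{KOc}\eqref{Kii}--\eqref{Kiii} the complement $O_\infty\setminus K$ is connected; equivalently, $K$ has no holes with respect to $O$, i.e. $K=\inhull_O K$. This ``no hole'' condition is precisely the hypothesis under which a Runge\,--\,Walsh type harmonic approximation theorem applies: every function harmonic on an open neighbourhood of $K$ can be approximated uniformly on $K$ by functions from $\har(O)$ (see \cite{Gardiner}). The hard part of the argument is exactly this step, and it is here that passing to the inward-filled hull rather than using the bare support $\supp\mu\cup\supp{\vartheta}$ is indispensable: without filling the relatively compact holes one could not promote neighbourhood-harmonicity to global harmonicity, and the approximation would fail.

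Finally, given $h\in\har(K)$ — that is, $h$ harmonic on some open neighbourhood of $K$ — I would take a sequence $h_n\in\har(O)$ with $h_n\to h$ uniformly on $K$, supplied by the approximation theorem. Since $\supp{\vartheta}\cup\supp\mu\subset K$ and the total variations $|{\vartheta}|(O)$, $|\mu|(O)$ are finite, uniform convergence on $K$ yields $\int h_n\dd{\vartheta}\to\int h\dd{\vartheta}$ and $\int h_n\dd\mu\to\int h\dd\mu$. Combining this with the equality $\int h_n\dd{\vartheta}=\int h_n\dd\mu$ for each $n$ from the first step gives \eqref{bh} in the limit. The only routine points remaining are the finiteness of the variations and the elementary bound $\bigl|\int(h_n-h)\dd{\vartheta}\bigr|\le \|h_n-h\|_{C(K)}\,|{\vartheta}|(O)$, together with its analogue for $\mu$.
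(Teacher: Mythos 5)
Your proposal is correct and follows essentially the same route as the paper's own proof: both invoke Proposition~\ref{KOc} together with the Runge\,--\,Walsh type harmonic approximation theorem from \cite{Gardiner} on the inward-filled hull, and then pass to the limit in the balayage relation using uniform convergence and finiteness of the total variations. The only cosmetic difference is that you symmetrize to an equality on $\har(O)$ \emph{before} approximating (via $\har(O)=-\har(O)$ and \eqref{balnumu=}), whereas the paper takes the limit in the inequality \eqref{balnumu} and then applies the result to $-h$; these are interchangeable steps.
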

See Subsec. \ref{hullin}, Definition \ref{df:hole} 
of inward-filled hull of compact subset $\supp \mu \cup \supp {\vartheta}$
 in $O$.
\begin{proof}  We set 
\begin{equation}\label{K}
K:=\supp \vartheta \cup \supp \mu \Subset O.
\end{equation} 
By Proposition \ref{KOc}\eqref{Kii} and \cite[Theorem 1.7]{Gardiner}, if $h\in \har\bigl({\inhull_O K}\bigr)$, then there are functions $h_k\in \har (O)$, $k\in \NN$, such that the sequence 
 $(h_k)_{k\in \NN}$ converges to $h$ in $C \bigl( {\inhull_O K}\bigr)$, and 
\begin{multline*}
\int_{{\inhull_O K}} h \dd {\vartheta}= \int_{{\inhull_O K}}  \lim_{k\to \infty}
h_k \dd {\vartheta} = \lim_{k\to \infty} \int_{O} h_k \dd {\vartheta}
\\
\overset{\eqref{balnumu}}{\leq} 
\lim_{k\to \infty} \int_{O} h_k \dd \mu= 
 \int_{{\inhull_O K}} \lim_{k\to \infty} h_k \dd \mu=
 \int_{{\inhull_O K}} h \dd {\vartheta}.
\end{multline*}
Using the opposite function $-h\in  \har\bigl({\inhull_O K}\bigr)$, we have the inverse inequality.  
\end{proof}

\begin{proposition}\label{pr:vstm} Suppose  that measures  $\varsigma, \vartheta, \mu \in \Meas_{\comp}^+(O)$ satisfy the conditions
\begin{equation}\label{var}
\begin{cases}
\varsigma &\preceq_{\har(O)}\vartheta,\\ 
\varsigma &\preceq_{\sbh(O)} \mu,
\end{cases}
\qquad \text{and }\inhull (\supp \varsigma \cup \supp \vartheta )\subset O',
\end{equation}
where $O'\Subset O$ is an open subset such that $O'\cap \supp \mu=\varnothing$. Then
$\vartheta\preceq_{\sbh(O)} \mu$.	
\end{proposition}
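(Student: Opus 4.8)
The plan is to fix an arbitrary test function $u\in\sbh(O)$ and prove the single inequality $\int u\dd\vartheta\leq\int u\dd\mu$, which is precisely the assertion $\vartheta\preceq_{\sbh(O)}\mu$. The two hypotheses act on different function classes, so the whole idea is to manufacture from $u$ a companion function $\widetilde u$ that is \emph{harmonic} where $\varsigma$ and $\vartheta$ live and \emph{unchanged} where $\mu$ lives. First I would extract the consequence of the harmonic relation $\varsigma\preceq_{\har(O)}\vartheta$. Put $\widehat K:=\inhull_O(\supp\varsigma\cup\supp\vartheta)$; by Proposition \ref{KOc}\eqref{Ki} this is a compact subset of $O$, and by hypothesis $\widehat K\subset O'$. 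Applying Proposition \ref{Pr:munuh} to the balayage $\varsigma\preceq_{\har(O)}\vartheta$ (so that $\vartheta$ is the swept-to measure and $\varsigma$ the swept-out one) gives
\begin{equation*}
\int h\dd\varsigma=\int h\dd\vartheta\qquad\text{for every }h\in\har\bigl(\widehat K\bigr).
\end{equation*}

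Next I would build the companion function by \emph{harmonic modification}, the same Perron\,--\,Wiener\,--\,Brelot device already used in the proof of Gluing Theorem \ref{gl:th_es}. Choose a Dirichlet regular open set $D$ with $\widehat K\subset D\Subset O'$; such $D$ exists since $\widehat K$ is compact in the open set $O'$ (one may take an outer parallel set as in Proposition \ref{llemmaD}). Let $\widetilde u$ be the upper PWB solution in $D$ with boundary data $u\bigm|_{\partial D}$, extended by $\widetilde u:=u$ on $O\setminus D$. Because $u$ is subharmonic, hence in the lower Perron family and bounded above on $\clos D$, the maximum principle gives $\widetilde u\geq u$ on $D$, so $\widetilde u\geq u$ on all of $O$; and $\widetilde u$ is harmonic on $D\supset\widehat K$, i.e. $\widetilde u\in\har(\widehat K)$. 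Crucially, since $\supp\mu\cap O'=\varnothing$ and $D\subset O'$, we have $\supp\mu\subset O\setminus D$, whence $\widetilde u=u$ on $\supp\mu$.

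With these four facts I would close the estimate in a single chain: using $\widetilde u\geq u$ and $\vartheta\geq 0$; then $\widetilde u\in\har(\widehat K)$ together with $\supp\vartheta,\supp\varsigma\subset\widehat K$ and the displayed harmonic equality; then $\widetilde u\in\sbh(O)$ with $\varsigma\preceq_{\sbh(O)}\mu$; and finally $\widetilde u=u$ on $\supp\mu$, I obtain
\begin{equation*}
\int u\dd\vartheta\leq\int\widetilde u\dd\vartheta=\int\widetilde u\dd\varsigma\leq\int\widetilde u\dd\mu=\int u\dd\mu.
\end{equation*}
As $u\in\sbh(O)$ was arbitrary, this is exactly $\vartheta\preceq_{\sbh(O)}\mu$.

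The main obstacle is the harmonic modification in the second step: one must be sure that $\widetilde u$ is genuinely subharmonic across $\partial D$ and harmonic on an open neighbourhood of $\widehat K$, even though $u$ is only upper semicontinuous on $\partial D$. The clean way around this is to verify the inequality $\limsup_{D\ni x'\to x}\widetilde u(x')\leq u(x)$ for $x\in\partial D$ — which follows by approximating the u.s.c. data $u\bigm|_{\partial D}$ from above by continuous functions on the regular domain $D$ — and then to glue $\widetilde u$ with $u$ by Gluing Theorem \ref{gl:th2} (whose hypotheses \eqref{g010}--\eqref{g011} require only such $\limsup$ inequalities, here $\widetilde u$ on $D$ against $u$ on $O$). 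Everything else is bookkeeping of supports, and the role of the hypothesis $\supp\mu\cap O'=\varnothing$ together with $\inhull(\supp\varsigma\cup\supp\vartheta)\subset O'$ is exactly to let the one function $\widetilde u$ serve double duty — harmonic on the supports of $\varsigma,\vartheta$ yet untouched on the support of $\mu$.
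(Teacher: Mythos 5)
Your proof is correct and follows essentially the same route as the paper's: both construct the harmonic modification $\widetilde u\geq u$ of an arbitrary test function $u$ on a Dirichlet-regular neighbourhood $D''$ of $\inhull_O(\supp\varsigma\cup\supp\vartheta)$ inside $O'$, invoke Proposition \ref{Pr:munuh} to trade $\dd\vartheta$ for $\dd\varsigma$ against the harmonic $\widetilde u$, apply $\varsigma\preceq_{\sbh(O)}\mu$ to $\widetilde u$, and use $\supp\mu\cap O'=\varnothing$ to conclude $\int\widetilde u\dd\mu=\int u\dd\mu$. The chain of (in)equalities you display is exactly the paper's chain \eqref{chain}, so no further comment is needed.
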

\begin{proof}  According to representation \eqref{{ODj}O}, it suffices to consider the case when $D:=O$ and $D':=O'$ are domains. There exists a regular for the Dirichlet problem domain $D''$ such that 
\begin{equation}\label{inss}
\inhull (\supp \varsigma \cup \supp \vartheta ) \overset{\eqref{var}}{\subset} D''\Subset D'\subset D
\end{equation}
since $\inhull (\supp \varsigma \cup \supp \vartheta )$ is compact subset in $D'$ by Proposition \ref{KOc}\eqref{Ki}.

Let $u\in \sbh_*(D)$. Then we can build a new subharmonic function $\widetilde u \in \sbh_*(D)$ such that \begin{equation}\label{tu}
\widetilde u\bigm|_{D''}\in \har (D''), \quad \widetilde u=u\quad\text{on $D\setminus D''$}, \quad 
u\leq \widetilde u\quad \text{on $D$}.
\end{equation} 
By Proposition \ref{Pr:munuh}, in view of the inclusion in \eqref{inss}, we have
\begin{equation}\label{chain}
\int_{D} u\dd \vartheta 
\overset{\eqref{var}}{=}\int_{D''}  u\dd \vartheta
\overset{\eqref{tu}}{\leq}\int_{D''} \widetilde u\dd \vartheta
\overset{\eqref{bh},\eqref{inss}}{=} \int_{D''} \widetilde u\dd \varsigma=
\int_{D} \widetilde u\dd \varsigma
\overset{\eqref{var}}{\leq} \int_{D} \widetilde{u} \dd \mu.
\end{equation}
Since $\supp \mu \subset D\setminus D' $, we can continue this chain of (in)equalities \eqref{chain} as 
\begin{equation*}
\int_{D} u\dd \vartheta \overset{\eqref{chain}}{\leq} \int_{D} \widetilde{u} \dd \mu
=\int_{D\setminus D'} \widetilde{u} \dd \mu
\overset{\eqref{tu}}{=}\int_{D\setminus D'} u  \dd \mu=\int_{D} u  \dd \mu.
\end{equation*}

\end{proof}	

\begin{proposition} Let $\mu\in \Meas^+_{\comp}(O)$ be a  $\har(O)$-balayage of  measure $\vartheta \in \Meas^+_{\comp}(O)$, and  $\varsigma\in \Meas^+_{\comp}(O)$ also  be a  $\har(O)$-balayage  of the same measure $\vartheta$. If 
\begin{equation*}
\inhull_{O}(\supp \vartheta \cup \supp \varsigma)\subset \inhull_{O}(\supp \vartheta \cup \supp \mu),
\end{equation*}
then the measure $\mu$ is a $\har (O)$-balayage of the measure $\varsigma$.
\end{proposition}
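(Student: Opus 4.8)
The plan is to reduce the assertion to an equality of harmonic integrals and then to transport it through the inward-filled hulls by means of Proposition~\ref{Pr:munuh}. First I would record that, for the class $H=\har(O)$, the balayage relation $\preceq_{\har(O)}$ is \emph{symmetric} on $\Meas^+_{\comp}(O)$: indeed $\har(O)=-\har(O)$, and for $h\in\har(O)$ (hence continuous) and a compactly supported positive measure the integral $\int h\dd\mu$ is finite, so the criterion for symmetry stated after Definition~\ref{df:1} applies. Consequently the two hypotheses mean precisely
$$
\int h\dd\vartheta=\int h\dd\mu,\qquad \int h\dd\vartheta=\int h\dd\varsigma\qquad\text{for all }h\in\har(O),
$$
and the target ``$\mu$ is a $\har(O)$-balayage of $\varsigma$'' amounts to $\int h\dd\mu=\int h\dd\varsigma$ for every $h\in\har(O)$.

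Next I would set $K_\mu:=\inhull_O(\supp\vartheta\cup\supp\mu)$ and $K_\varsigma:=\inhull_O(\supp\vartheta\cup\supp\varsigma)$, which are compact subsets of $O$ by Proposition~\ref{KOc}(i). Applying Proposition~\ref{Pr:munuh} to the balayage $\vartheta\preceq_{\har(O)}\mu$ gives $\int h\dd\vartheta=\int h\dd\mu$ for every $h\in\har(K_\mu)$, and applying it to $\vartheta\preceq_{\har(O)}\varsigma$ gives $\int h\dd\vartheta=\int h\dd\varsigma$ for every $h\in\har(K_\varsigma)$. Here the hypothesis enters decisively: the inclusion $K_\varsigma\subset K_\mu$ reverses upon passing to function classes, so that $\har(K_\mu)\subset\har(K_\varsigma)$ (a function harmonic on a neighborhood of the larger set $K_\mu$ restricts to an element of $\har(K_\varsigma)$). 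Hence for an arbitrary $h\in\har(K_\mu)$ both equalities are available, and subtracting them yields $\int h\dd\mu=\int h\dd\varsigma$ for all $h\in\har(K_\mu)$.

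Finally, since $K_\mu\subset O$, every $h\in\har(O)$ restricts to a member of $\har(K_\mu)$, so the displayed equality holds in particular for all $h\in\har(O)$; by the symmetry noted above this is exactly the statement that $\mu$ is a $\har(O)$-balayage of $\varsigma$. (In fact the argument proves more, namely that the equality $\int h\dd\mu=\int h\dd\varsigma$ persists for every $h\in\har(K_\mu)$, i.e.\ for test functions merely harmonic on the inward-filled hull of $\supp\vartheta\cup\supp\mu$.) I expect the only genuine obstacle to be the domain-matching step: one must be careful that the inclusion of compacta $K_\varsigma\subset K_\mu$ produces the \emph{opposite} inclusion $\har(K_\mu)\subset\har(K_\varsigma)$ of test classes, and that both $\supp\mu$ and $\supp\varsigma$ indeed lie inside $K_\mu$ (the latter because $\supp\varsigma\subset K_\varsigma\subset K_\mu$). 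Without the hypothesized inclusion one could not evaluate the $\varsigma$-equality on functions harmonic only on $K_\mu$, and the two extended equalities would fail to overlap.
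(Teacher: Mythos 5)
Your proof is correct, and it is exactly the argument the paper has in mind: the paper omits the proof as an ``easy corollary of Proposition~\ref{Pr:munuh}'', which is what you carry out by applying Proposition~\ref{Pr:munuh} to both hypotheses, using the inclusion of inward-filled hulls to make the two classes of test functions overlap, and then restricting functions from $\har(O)$.

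One observation is worth adding. Your opening paragraph already proves the proposition by itself and shows that the hypothesis on hulls is redundant for the conclusion as stated: once you note that $\preceq_{\har(O)}$ is symmetric on $\Meas_{\comp}^+(O)$ (Definition~\ref{df:1}, since $\har(O)=-\har(O)$ and the integrals are finite), the two hypotheses read $\int h\dd\vartheta=\int h\dd\mu$ and $\int h\dd\vartheta=\int h\dd\varsigma$ for all $h\in\har(O)$, and transitivity of equality gives $\int h\dd\varsigma=\int h\dd\mu$ for all $h\in\har(O)$, which is precisely the conclusion. Consequently your closing claim --- that without the hypothesized inclusion ``the two extended equalities would fail to overlap'' --- is not accurate: they always overlap on $\har(O)$, and that is all the stated conclusion requires. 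What the inclusion and the passage through Proposition~\ref{Pr:munuh} genuinely buy is the stronger fact you record parenthetically, namely that the equality $\int h\dd\mu=\int h\dd\varsigma$ persists for every $h$ harmonic merely on a neighborhood of $\inhull_O(\supp\vartheta\cup\supp\mu)$, a class which in general is strictly larger than the restrictions of functions from $\har(O)$; for that refinement the inclusion of hulls is indeed essential, since otherwise the $\varsigma$-equality cannot be evaluated on such $h$.
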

We omit the proof of this easy corollary of Proposition \ref{Pr:munuh}.

\begin{proposition}\label{pr:4}  Let  $\mu \in \Meas_{\comp}(O)$ be a  balayage of ${\vartheta}\in  \Meas_{\comp}(O)$ for $\sbh (O)$.  Then 
\begin{equation}\label{bhs}
\int  u \dd {\vartheta}\leq \int  u\dd \mu \quad \text{ for each  $u\in \sbh \bigl({\inhull_O (\supp \mu \cup \supp {\vartheta})}\bigr)$},
\end{equation}
i.\,e.,  if $O'\supset\inhull_O (\supp \mu \cup \supp {\vartheta})$ is an open set, then $\mu$ is a    $\sbh(O')$-balayage of $\vartheta$. 
\end{proposition}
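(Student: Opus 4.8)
The plan is to reduce the subharmonic estimate \eqref{bhs} to the harmonic equality \eqref{bh} already proved in Proposition \ref{Pr:munuh}, together with the elementary observation that the Newtonian/logarithmic kernel $K_{d-2}(\cdot,y)$ is itself a member of $\sbh(O)$ for every fixed $y$. First I would set $K:=\supp\vartheta\cup\supp\mu\Subset O$ and $E:=\inhull_O K$, which is a compact subset of $O$ by Proposition \ref{KOc}\eqref{Ki} and contains both $\supp\vartheta$ and $\supp\mu$. I also record at the outset that, since both $h$ and $-h$ lie in $\sbh(O)$ whenever $h\in\har(O)$, the hypothesis that $\mu$ is a $\sbh(O)$-balayage of $\vartheta$ forces $\mu$ to be a $\har(O)$-balayage of $\vartheta$; hence Proposition \ref{Pr:munuh} applies and gives $\int h\dd\vartheta=\int h\dd\mu$ for every $h\in\har(E)$.

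Given $u\in\sbh(E)$, by definition $u\in\sbh(W)$ for some open $W\supset E$. I would choose an open set $W'$ with $E\subset W'\Subset W$ and apply the Riesz decomposition theorem (see, e.g., \cite{R}, \cite{HK}) on $W'$: with the finite positive Riesz measure $\nu:=\varDelta_u\bigm|_{W'}$ and the potential
\begin{equation*}
U^{\nu}(x):=\int K_{d-2}(x,y)\dd\nu(y),
\end{equation*}
there is a function $h\in\har(W')$ such that $u=U^{\nu}+h$ on $W'$. Since $E\subset W'$, the harmonic part satisfies $h\in\har(E)$, so the first step yields $\int h\dd\vartheta=\int h\dd\mu$. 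Because $\supp\vartheta\cup\supp\mu\subset E\subset W'$, the decomposition holds $\vartheta$- and $\mu$-almost everywhere, and it remains only to control the potential $U^{\nu}$.

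For the potential part I would use that, for each fixed $y$, the function $x\mapsto K_{d-2}(x,y)$ defined in \eqref{{kK}K} is subharmonic on all of $\RR^d$ (convex for $d=1$), hence lies in $\sbh(O)$; the balayage relation \eqref{balnumu} therefore gives $\int K_{d-2}(x,y)\dd\vartheta(x)\leq\int K_{d-2}(x,y)\dd\mu(x)$ for every $y$. Integrating this inequality against the positive measure $\nu$ and interchanging the order of integration yields
\begin{equation*}
\int U^{\nu}\dd\vartheta=\int\Bigl(\int K_{d-2}(x,y)\dd\vartheta(x)\Bigr)\dd\nu(y)\leq\int\Bigl(\int K_{d-2}(x,y)\dd\mu(x)\Bigr)\dd\nu(y)=\int U^{\nu}\dd\mu,
\end{equation*}
and adding the harmonic equality $\int h\dd\vartheta=\int h\dd\mu$ gives exactly \eqref{bhs}. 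The final assertion about $\sbh(O')$-balayage for open $O'\supset\inhull_O K$ is then immediate, since each $u\in\sbh(O')$ restricts to a member of $\sbh(\inhull_O K)$. The main obstacle is the Fubini/Tonelli interchange above: the kernel $K_{d-2}$ is unbounded below and takes the value $-\infty$ on the diagonal, so I must first justify joint measurability and the applicability of Tonelli's theorem for the positive measure $\nu$, exploiting that $K_{d-2}$ is bounded above on the compact product $E\times\clos W'$ so that the iterated integrals are well defined in $\RR_{-\infty}$, and, for genuinely signed $\vartheta,\mu$, invoke the standing integrability assumption built into Definition \ref{df:1}.
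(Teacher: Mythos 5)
Your argument is essentially correct, but it takes a genuinely different route from the paper's. Writing $K:=\supp\vartheta\cup\supp\mu$, the paper's proof is a three-line affair resting on a subharmonic \emph{extension} theorem: since $O_{\infty}\setminus \inhull_O K$ is connected and locally connected by Proposition \ref{KOc}\eqref{Kii}, every $u\in \sbh\bigl(\inhull_O K\bigr)$ extends to a function $U\in\sbh(O)$ with $U=u$ on $\inhull_O K$ (Gardiner, Gauthier), and the balayage hypothesis \eqref{balnumu} is then applied directly to $U$. You avoid that extension theorem entirely: Riesz decomposition $u=U^{\nu}+h$ on a neighborhood of the hull, Proposition \ref{Pr:munuh} for the harmonic part (this is where the topology of the hull enters your proof, through harmonic approximation), and subharmonicity of the kernel for the potential part; so both proofs ultimately rest on Proposition \ref{KOc}\eqref{Kii}, yours indirectly. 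The trade-off: the paper buys brevity at the price of quoting a nontrivial extension theorem, while you stay within tools already proved in the paper plus classical potential theory, at the price of measure-theoretic bookkeeping. That bookkeeping --- the Fubini\,/\,Tonelli interchange --- is the one fragile point of your write-up: the positivity argument you sketch (kernel bounded above on compacts, values in $\RR_{-\infty}$) runs smoothly only for \emph{positive} measures, which is exactly how the paper justifies the analogous interchange in Theorem \ref{PJf}, whereas for genuinely signed $\vartheta,\mu$ the inner integral $y\mapsto \int K_{d-2}(x,y)\dd\vartheta(x)$ and its recombination from the four pieces $\vartheta^{\pm},\mu^{\pm}$ require extra justification. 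You can bypass this entirely: the potential $U^{\nu}={\pt}_{\nu}$ of the finite positive compactly supported measure $\nu=\varDelta_u\bigm|_{W'}$ is itself subharmonic on all of $\RR^d$ (convex for $d=1$), so $U^{\nu}\bigm|_{O}\in\sbh(O)$ and the balayage hypothesis \eqref{balnumu} applies to $U^{\nu}$ \emph{directly}, giving $\int U^{\nu}\dd\vartheta\leq \int U^{\nu}\dd\mu$ with no interchange of integrals at all; adding the harmonic equality from Proposition \ref{Pr:munuh} (finite, since $h$ is continuous on the compact hull) then finishes the proof cleanly for charges as well.
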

\begin{proof} We use  the notation \eqref{K}. By Proposition \ref{KOc}\eqref{Kii},  if $ u\in \sbh\bigl({\inhull_O K}\bigr)$, then there is  a function $ U \in \sbh \bigl(O)$ such that $u=U$ on  $ {\inhull_O K}$ \cite[Theorem 6.1]{Gardiner},
\cite[Theorem 1]{Gauther_C}, \cite[Theorem 16]{Gauther_B}, and we have
\begin{equation*}
\int_{{\inhull_O K}}  u \dd {\vartheta}= \int_{{\inhull_O K}}  
U \dd {\vartheta} = \int_{O} U \dd {\vartheta}
\overset{\eqref{balnumu}}{\leq} 
 \int_{O} U \dd \mu= 
 \int_{{\inhull_O K}} U \dd \mu=
 \int_{{\inhull_O K}}  u \dd {\mu}
\end{equation*}
that   gives \eqref{bhs}.
\end{proof}

\begin{proposition}\label{Pr_pol}
If  $\mu\in \Meas_{\comp}^+(O)$ is  a\/  $\sbh(O)$-balayage of a measure ${\vartheta} \in \Meas_{\comp}^+(O)$, and  a set $E$ is polar, then  $\mu (O\cap E\setminus \supp {\vartheta})=0$.
\end{proposition}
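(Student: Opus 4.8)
The plan is to argue by contradiction, producing a single subharmonic test function that equals $-\infty$ on a set of positive $\mu$-measure yet stays integrable against $\vartheta$, and then to feed this function into the balayage inequality \eqref{balnumu}. The whole point is to make the polarity of $E$ interact with the $\sbh(O)$-balayage, and the separation of $E\setminus\supp\vartheta$ from $\supp\vartheta$ is what keeps the $\vartheta$-side of that inequality finite.

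First I would dispose of $d=1$, where a locally convex function on an interval is either finite or $\equiv-\infty$, so the only polar set is $\varnothing$ and the claim is trivial; thus assume $d\ge2$. Using the definition \eqref{E}, pick $u\in\sbh_*(\RR^d)$ with $E\subset P:=\{x: u(x)=-\infty\}$; the set $P$ is a Borel ($G_\delta$) polar set containing $E$, so it suffices to prove $\mu\bigl(P'\bigr)=0$ for the Borel set $P':=O\cap P\setminus\supp\vartheta$. Suppose, for contradiction, that $\mu(P')>0$. Since $\mu\in\Meas_{\comp}^+(O)$ is a finite Radon measure, it is inner regular, so I would extract a compact $K\subset P'$ with $\mu(K)>0$. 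As a subset of $P$ this compact $K$ is polar, and by construction $K\cap\supp\vartheta=\varnothing$.

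The key step is to replace $u$ by a test function sharply adapted to $K$. By the classical Evans theorem for compact polar sets (see \cite{Landkoff}, and \cite{R} for $d=2$) there is a measure $\nu\in\Meas^+(K)$, $\nu\neq0$, whose potential
\[
p:=\int K_{d-2}(\cdot,y)\dd\nu(y)\in\sbh_*(\RR^d)
\]
satisfies $p\equiv-\infty$ on all of $K$ while $p\in\har(\RR^d\setminus K)$, so $p$ is finite and continuous off $K$. This is exactly the feature required: because $K\cap\supp\vartheta=\varnothing$, the compact set $\supp\vartheta$ lies in $\RR^d\setminus K$, where $p$ is bounded, whence $\int p\dd\vartheta$ is finite. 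On the other hand $\supp\mu\Subset O$ is compact and $p$ is upper semicontinuous, so $p\le M<+\infty$ on $\supp\mu$; since $p\equiv-\infty$ on $K$ and $\mu(K)>0$, this forces $\int p\dd\mu=\int_K p\dd\mu+\int_{\supp\mu\setminus K}p\dd\mu=-\infty$.

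Finally, since $p\bigm|_O\in\sbh(O)$, the balayage hypothesis \eqref{balnumu} yields $\int p\dd\vartheta\le\int p\dd\mu=-\infty$, forcing $\int p\dd\vartheta=-\infty$ and contradicting the finiteness just established. Hence $\mu(P')=0$, and therefore $\mu\bigl(O\cap E\setminus\supp\vartheta\bigr)\le\mu(P')=0$. The main obstacle, and the only nonelementary ingredient, is the existence of the Evans potential $p$ that descends to $-\infty$ at every point of $K$ while remaining harmonic away from $K$; a generic defining function of a polar set would be $-\infty$ on an uncontrolled $G_\delta$ set possibly meeting $\supp\vartheta$, which would make $\int p\dd\vartheta=-\infty$ as well and collapse the argument, so both the harmonicity of $p$ off $K$ and the disjointness $K\cap\supp\vartheta=\varnothing$ are essential.
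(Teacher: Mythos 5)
Your proof is correct, but it takes a genuinely different route from the paper's. The paper argues directly, not by contradiction: it keeps the defining function $u\in\sbh_*(O)$ with $u\equiv-\infty$ on $E$, surrounds $\supp\vartheta$ by a decreasing sequence of Dirichlet-regular open neighborhoods $O_k\Supset\supp\vartheta$ with $\bigcap_k O_k=\supp\vartheta$, and replaces $u$ inside each $O_k$ by its harmonic extension from $\partial O_k$; the resulting functions $U_k\in\sbh_*(O)$ are bounded below on $\supp\vartheta$ (so $\int U_k\dd\vartheta>-\infty$) while still equal to $-\infty$ on $E\setminus O_k$, and feeding $U_k$ into \eqref{balnumu} forces $\mu(E\setminus O_k)=0$ for every $k$, which exhausts $E\setminus\supp\vartheta$ as $k\to\infty$. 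You instead extract, via inner regularity, a compact $K$ of positive $\mu$-measure inside the $G_\delta$ hull $\{u=-\infty\}\setminus\supp\vartheta$ and invoke Evans' theorem to manufacture a potential $p={\pt}_{\nu}$ with $\supp\nu\subset K$ that is $-\infty$ on $K$ and harmonic off $K$, so that finiteness of $\int p\dd\vartheta$ comes for free from $K\cap\supp\vartheta=\varnothing$, and then contradict \eqref{balnumu}. The mechanism is the same in both proofs --- a function of $\sbh(O)$ equal to $-\infty$ on a set of positive $\mu$-measure yet $\vartheta$-integrable --- but the key devices differ: the paper's Poisson modification on regular neighborhoods is elementary and self-contained, whereas your Evans potential imports a deeper classical result which, in exchange, removes the need to construct regular neighborhoods and to pass to a limit, and, by working with the Borel hull of $E$, also settles the measurability of a possibly non-Borel polar set, a point the paper leaves implicit. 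Your separate dismissal of $d=1$ (where nonempty polar sets do not exist) is also correct, though the paper does not need it.
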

This Proposition \ref{Pr_pol} is apparently well-known fact, but nevertheless we give its full 
\begin{proof} There is $k_0\in \NN$ such that $B(x,1/k_0)\Subset O$ for all 
$x\in \supp {\vartheta}$.  For any $k\in k_0+\NN_0$ 
there exists an finite cover of $\supp {\vartheta}$ by balls $B(x_j,1/k)\Subset O$ such that the open subsets 
\begin{equation*}
O_k:=\bigcup_j B(x_j,1/k)\Subset O,\quad
 \supp {\vartheta} \Subset O_k\supset O_{k+1}, \quad k\in k_0 +\NN_0, \quad \supp {\vartheta} =\bigcap_{k\in k_0+\NN_0} O_k,  
\end{equation*}
have complements $\RR_{\infty}^d \setminus O_k$ in $\RR_{\infty}^d$ \textit{without isolated points.\/} Then 
 every open set  $O_k\Subset O$ is regular for the Dirichlet problem.
It suffices to prove that the equality $\mu ( O_k\cap E)=0$ holds for every number  
 $k\in k_0+\NN_0$. By definition of polar sets, there is a  function $u\in \sbh_*(O)$ such that $u(E)=\{-\infty\}$. Consider functions 
\begin{equation*}
U_k=\begin{cases}
u \text{ \it  on $O\setminus O_k$},\\
\text{\it the harmonic extension of $u$ from $\partial O_k$ into $O_k$}\text{ on $O_k$},
\end{cases}   
\qquad k\in k_0+\NN_0.
\end{equation*}
We have  $U_k\in \sbh_*(O)$, and $U_k$ is bounded from below in $\supp {\vartheta} \Subset O_k$. Hence
\begin{multline*}\label{<U}
-\infty <\int_O U_k \dd {\vartheta}
\overset{\eqref{balnumu}}{\leq}
\int_O U_k \dd \mu=
\left(\int_{O\setminus (O_k\cap E)}+\int_{O_k\cap E}\right) U_k \dd \mu
\\
=\int_{O\setminus (O_k\cap E)} U_k \dd \mu+(-\infty)\cdot \mu(O_k\cap E)
\overset{\eqref{{infty}0}}{\leq} \mu(O) \sup_{\supp \mu} U_k+(-\infty)\cdot \mu(O_k\cap E).
\end{multline*}
Thus, we have  $\mu(O_k\cap E)=0$.
\end{proof}

Generally speaking, Proposition \ref{Pr_pol} is not true for $\har(O)$-balayage.  An implicit example built in \cite[Example]{MenKha19}. 
We will indicate in Example \ref{5} one more constructive and general way of building in this direction.

\begin{example}[{\rm development of one example  of T. Lyons \cite[XIB2]{Koosis}}] \label{5} Consider 
\begin{equation}\label{Eas}
O=\BB,  \quad 0<r_0<r<1,  \quad \vartheta\overset{\eqref{df:spb}}{:=}
\frac1{b_dr_0^d}\lambda_d\bigm|_{r_0\BB}, 
\quad \mu \overset{\eqref{df:spb}}{:=}
\frac1{b_dr^d}\lambda_d\bigm|_{r\BB}  
\end{equation} 
Easy to see that $\vartheta \preceq_{\sbh(\BB)} \mu$. Let $E=(e_j)_{j\in \NN}\Subset r\BB\setminus r_0\overline \BB$ be a polar countable set without limit point in $r\BB\setminus r_0\overline \BB$.  Surround each point $e_j\in E$ with a ball $B(e_j,r_j)$ of such a small radius  $r_j>0$ that the union of all these balls is contained in
$r\BB\setminus r_0\overline \BB$. Consider a measure 
\begin{align*}
\mu_E&\overset{\eqref{mB}}{:=}\mu-\frac{1}{b_dr^d}\sum_{j\in \NN} \lambda_d\bigm|_{B(e_j,r_j)}+\frac{1}{b_dr^d}\sum  \lambda_d(e_j,r_j)\delta_{e_j}\\
&\overset{\eqref{Eas}}{=}
\frac1{b_dr^d}\lambda_d\bigm|_{r\BB}-\frac{1}{b_dr^d}\sum_{j\in \NN} \lambda\bigm|_{B(e_j,r_j)}+\frac{1}{r^d}\sum_{j\in\NN} r_j^d \delta_{e_j} .
\end{align*}
By construction, the measure $\mu_E$ is $\har(\BB)$-balayage of measure $\vartheta$, 
but \begin{equation*}
\mu_E(E)=\frac{1}{r^d}\sum_{j\in \NN} r_j^d >0.
\end{equation*}
in direct contrast to Proposition \ref{Pr_pol}. 
\end{example}

\section{Potentials of  charges and measures}\label{Pot}
\setcounter{equation}{0}

Further everywhere we will assume for simplicity and brevity that 
\begin{equation}\label{OD}
(O\subset \RR^d)\Leftrightarrow(\infty \notin O), \quad (D\subset \RR^d)\Leftrightarrow(\infty \notin D)
\end{equation}
in addition to \eqref{ODj}. If  $\infty \in O$, $o\in \RR_{\infty}^d\setminus O$,   
we can always easily go to cases  \eqref{OD} using a  inversion ${\star_{o}}$, 
and the Kelvin transforms \eqref{stK}.

\begin{definition}[{\rm \cite{R}, \cite[Definition 2]{Kha03}, \cite[3.1, 3.2]{KhaRoz18}, \cite{Chi18}}]\label{df:pot} 
Let ${\mu} \in \Meas_{\comp}(\RR^d)$ be charge with compact support. Its {\it potential\/} 
 is the  function $\pt_{\mu}\in \dsbh_*(\RR^d)$
defined by      
\begin{equation}\label{{pmu}p} 
\pt_{\mu}(y)\overset{\eqref{{kK}K}}{:=}\int K_{d-2}(x,y) \dd \mu (x), 
\end{equation}
where the kernel  $K_{d-2}$ is defined in Definition \ref{df:kK}
by the function $k_q$ from \eqref{{kK}k}. The values of potential 
${\pt}_{\mu}(y)\in \RR_{\pm\infty}$ is well defined for 
all 
\begin{equation*}
\begin{split}
y\overset{\eqref{dom}}{\in} \Dom_{-\infty} {\pt}_{\mu}&=
\left\{y\in \RR^d\colon \int_{0}\frac{\mu^-(y,t)}{t^{m-1}} \dd t\overset{\eqref{mB}}{<}+ \infty 
\right\},
\\
y\overset{\eqref{dom}}{\in} \Dom_{+\infty} {\pt}_{\mu}&= 
\left\{y\in \RR^d\colon \int_{0}\frac{\mu^+(y,t)}{t^{m-1}} \dd t\overset{\eqref{mB}}{<}+ \infty 
\right\},
\\
y\overset{\eqref{dom}}{\in} \Dom_{\pm \infty} {\pt}_{\mu}&= \Dom_{-\infty} {\pt}_{\mu}\bigcup  \Dom_{+\infty} {\pt}_{\mu},
\\
y\overset{\eqref{dom}}{\in} \dom {\pt}_{\mu}&=\Dom_{-\infty} {\pt}_{\mu}\bigcap \Dom_{+\infty}{\pt}_{\mu},
\end{split}
\end{equation*}
and their complements $\RR^d\setminus \Dom_{-\infty} {\pt}_\mu$ and $\RR^d\setminus \Dom_{+\infty} {\pt}_{\mu}$ are  {\it polar sets\/} in $\RR^d$.
 \end{definition}

If $\mu\in \Meas_{\comp}^+(O)$ be a $H$-balayage of a measure ${\vartheta} \in \Meas_{\comp}^+(O) $, then we consider the potential
\begin{equation}\label{pmunu}
{\pt}_{\mu-{\vartheta}}\overset{\eqref{{pmu}p}}{:=}{\pt}_\mu-{\pt}_{\vartheta} \in \dsbh(\RR^d)
\end{equation}
where under the conditions $d>1$ and $1\in H$ it is natural to set ${\pt}_{\mu-{\vartheta}}(\infty) :=0$. 
The latter is based on the following
\begin{proposition}\label{prO} Let $\mu \in \Meas_{\comp}(\RR^d)$. Then
\begin{equation}\label{pnu0}
{\pt}_{\mu}(x)\overset{ \eqref{{kK}k}}{=}\mu (\RR^d)k_{d-2}\bigl(|x|\bigr)+O\bigl(1/|x|^{d-1}\bigr),  
\quad x\to \infty.
\end{equation}
\end{proposition}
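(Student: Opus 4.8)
The plan is to compare $\pt_\mu$ with its leading radial term, exploiting that $\mu$ has compact support, say $\supp\mu\subset\overline B(0,R)$. Since \eqref{pnu0} uses $x$ for the evaluation point while \eqref{{pmu}p} uses it for the variable of integration, I write $y$ for the evaluation point throughout. First I would record the trivial identity $\mu(\RR^d)\,k_{d-2}(|y|)=\int k_{d-2}(|y|)\dd\mu(x)$, which turns the claim into an estimate on a difference:
\begin{equation*}
\pt_\mu(y)-\mu(\RR^d)\,k_{d-2}(|y|)\overset{\eqref{{pmu}p}}{=}\int\bigl(k_{d-2}(|x-y|)-k_{d-2}(|y|)\bigr)\dd\mu(x).
\end{equation*}
Thus everything reduces to a bound, uniform for $x\in\supp\mu$, on the increment of the radial profile $\phi:=k_{d-2}$ between the arguments $|x-y|$ and $|y|$.

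The key computation is that, by \eqref{{kK}k}, the profile $\phi$ has derivative $\phi'(t)=\max\{1,d-2\}\,t^{-(d-1)}$ on $\RR_*^+$ in \emph{every} dimension (this reads $(d-2)t^{-(d-1)}$ for $d\geq3$, $t^{-1}$ for $d=2$, and the constant $1$ for $d=1$); the decisive point is that the exponent is $-(d-1)$ in all three cases. By the reverse triangle inequality $\bigl||x-y|-|y|\bigr|\leq|x|\leq R$, and for $|y|>2R$ both $|x-y|$ and $|y|$ exceed $|y|/2$; hence the mean value theorem yields a $\tau>|y|/2$ with
\begin{equation*}
\bigl|\phi(|x-y|)-\phi(|y|)\bigr|=\phi'(\tau)\,\bigl||x-y|-|y|\bigr|\leq \max\{1,d-2\}\,(|y|/2)^{-(d-1)}\,R,
\end{equation*}
uniformly in $x\in\supp\mu$, which is exactly $O\bigl(|y|^{-(d-1)}\bigr)$.

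Integrating this pointwise bound against the total variation $|\mu|$ then gives
\begin{equation*}
\Bigl|\pt_\mu(y)-\mu(\RR^d)\,k_{d-2}(|y|)\Bigr|\leq |\mu|(\RR^d)\,\sup_{x\in\supp\mu}\bigl|\phi(|x-y|)-\phi(|y|)\bigr|=O\bigl(|y|^{-(d-1)}\bigr),
\end{equation*}
which is \eqref{pnu0}; writing $\mu=\mu^+-\mu^-$ shows the signed (charge) case is covered by the same estimate. The only thing that needs genuine care is the uniformity across dimensions: for $d\geq2$ the decay $|y|^{-(d-1)}$ comes from $\phi'(\tau)$, whereas for $d=1$ the derivative $\phi'\equiv1$ does \emph{not} decay and the bound $O(1)=O(|y|^{0})$ is instead produced by the bounded increment $\bigl||x-y|-|y|\bigr|\leq R$; verifying that these two mechanisms combine into the single exponent $-(d-1)$ is the main (and mild) subtlety. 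A routine side check is that $|y|>2R$ forces $y\notin\supp\mu$, so $\pt_\mu(y)$ is finite and the integrand bounded, legitimizing the interchange of the difference with the integral.
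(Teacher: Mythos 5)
Your proof is correct, and it takes a genuinely different route from the paper's. The paper handles the three regimes separately: for $d=1$ it uses the reverse triangle inequality directly ($\bigl||x-y|-|x|\bigr|\leq|y|$, giving $O(1)$); for $d=2$ it simply cites Ransford's Theorem 3.1.2; and for $d>2$ it works with the algebraic factorization of $|x-y|^{d-2}-|x|^{d-2}$, estimating $\bigl|\,|x|^{-(d-2)}-|x-y|^{-(d-2)}\bigr|$ with explicit constants $2^{d-2}$ and $3^{d-2}$ on the region $|x|\geq 2\sup\{|y|\colon y\in\supp\mu\}$. Your argument instead runs all dimensions through one mechanism: the observation that the radial profile $\phi=k_{d-2}$ from \eqref{{kK}k} satisfies $\phi'(t)=\max\{1,d-2\}\,t^{-(d-1)}$ in every dimension, followed by the mean value theorem on the segment between $|x-y|$ and $|y|$ (both exceeding $|y|/2$ once $|y|>2R$), with monotonicity of $\phi'$ supplying the uniform bound. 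This buys you a self-contained proof (no external citation for $d=2$), a shorter text, and a conceptual explanation of why the exponent $-(d-1)$ appears uniformly — it is literally the exponent of $\phi'$; the one subtlety you correctly flag is that for $d=1$ the decay comes from the bounded increment rather than from $\phi'$, and your formula absorbs both mechanisms. What the paper's route buys in exchange is purely algebraic explicit constants for $d>2$ without invoking calculus, at the price of a case split and an appeal to the literature in the planar case. Your side checks (finiteness of $\pt_\mu(y)$ for $|y|>2R$, reduction of the signed case to $|\mu|=\mu^++\mu^-$, which has finite total mass for a compactly supported charge) are exactly what is needed to justify pulling the difference inside the integral, and match the paper's implicit use of $\int|y|\dd|\mu|(y)<+\infty$.
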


\begin{proof} For $d=1$, we have
\begin{equation*}
\left|\pt_{\mu}(x)-\mu(\RR)|x|\right|\leq \int \bigl||x-y|-|x|\bigr|\dd |\mu|(y)\leq \int |y|\dd |\mu|(y)
=O(1), \quad |x|\to +\infty. 
\end{equation*} 
 
See \eqref{pnu0} for $d=2$ in \cite[Theorem 3.1.2]{R}. 

For $d>2$ and $|x|\geq 2\sup\bigl\{|y|\colon y\in \supp \mu\bigr\}$, we have 
\begin{multline*}
\left|{\pt}_{\mu}(x)-\mu (\RR^d)k_{d-2}\bigl(|x|\bigr)\right|
=\left|\int \left(\frac{1}{|x|^{d-2}}-\frac{1}{|x-y|^{d-2}}\right)
\dd \mu (y)\right|\\
\leq\int \left|\frac{1}{|x|^{d-2}}-\frac{1}{|x-y|^{d-2}}\right|
\dd |\mu|(y)
\leq \frac{2^{d-2}}{|x|^{2d-4}}\int \left||x-y|^{d-2}
-|x|^{d-2}\right| \dd |\mu|(y)
\\
\leq \frac{2^{d-2}}{|x|^{2d-4}}
\int |y||x|^{d-3} \sum_{k=0}^{d-3}\Bigl(\frac32\Bigr)^k 
\dd |\mu|(y)\leq 2\frac{3^{d-2}}{|x|^{d-1}}
\int |y|\dd |\mu|(y)=O\Bigl(\frac{1}{|x|^{d-1}}\Bigr).
\end{multline*}
\end{proof}

\begin{proposition}\label{pt_below} If 
\begin{equation}\label{muLo}
\mu \in \Meas_{\comp}^+(\RR^d),\quad
L\Subset \RR^d, \quad o\in \RR^d\setminus L,
\end{equation} 
then 
\begin{subequations}\label{pmul}
\begin{align}
\inf_{x\in L} {\pt}_{\mu}(x)&\overset{\eqref{{kK}k}}{\geq} \mu(\RR^d)k_{d-2}\bigl(\dist(L,\supp \mu)\bigr),
\tag{\ref{pmul}i}\label{{pmul}i}
\\
\inf_{x\in L} {\pt}_{\mu-\delta_o}(x)&\overset{\eqref{pmunu}}{\geq} \mu(\RR^d)k_{d-2}\bigl(\dist(L,\supp \mu)\bigr)-
k_{d-2}\left(\sup_{x\in L}|x-o|\right)
\tag{\ref{pmul}o}\label{{pmul}o}
\end{align}
\end{subequations}
\end{proposition}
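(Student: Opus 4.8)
The plan is to reduce both estimates to a single elementary property of the radial kernel: the function $t\mapsto k_{d-2}(t)$ is nondecreasing on $\RR_*^+$ for every dimension $d\in \NN$. First I would verify this monotonicity case by case from \eqref{{kK}k}: for $d=1$ one has $k_{-1}(t)=t$, for $d=2$ one has $k_0(t)=\ln t$, and for $d\geq 3$ one has $k_{d-2}(t)=-t^{-(d-2)}$; in each regime the function strictly increases in $t$. This is the only structural feature of the kernel that the bounds \eqref{{pmul}i} and \eqref{{pmul}o} actually exploit, and it is the reason the positivity hypothesis $\mu\in \Meas_{\comp}^+(\RR^d)$ is imposed.

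For \eqref{{pmul}i}, fix $x\in L$ and set $\rho:=\dist(L,\supp \mu)$. For every $y\in \supp \mu$ we have $|x-y|\geq \rho$, whence $K_{d-2}(x,y)\overset{\eqref{{kK}K}}{=}k_{d-2}(|x-y|)\geq k_{d-2}(\rho)$ by monotonicity. Integrating this pointwise inequality against the positive measure $\mu$ and using $\mu(\supp \mu)=\mu(\RR^d)$ gives
\[
\pt_\mu(x)\overset{\eqref{{pmu}p}}{=}\int k_{d-2}(|x-y|)\dd \mu(y)\geq \mu(\RR^d)\,k_{d-2}(\rho).
\]
As the right-hand side is independent of $x$, taking the infimum over $x\in L$ yields \eqref{{pmul}i}. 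When $\rho>0$ all quantities are finite, since $|x-y|$ then ranges over a compact subset of $\RR_*^+$ and $\mu$ is a finite measure; when $\rho=0$ the right-hand side equals $-\infty$ for $d\geq 2$ (and $0$ for $d=1$, consistent with $\pt_\mu\geq 0$ there), so the inequality holds trivially under the conventions of \eqref{{infty}0}.

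For \eqref{{pmul}o}, I would split the potential of the charge $\mu-\delta_o$ as in \eqref{pmunu}, noting that $\pt_{\delta_o}(x)\overset{\eqref{{pmu}p}}{=}K_{d-2}(o,x)=k_{d-2}(|x-o|)$, which is finite for $x\in L$ since $o\notin L$. Because $|x-o|\leq \sup_{x'\in L}|x'-o|$, monotonicity furnishes the upper bound $k_{d-2}(|x-o|)\leq k_{d-2}\bigl(\sup_{x'\in L}|x'-o|\bigr)$. Combining this with the lower bound for $\pt_\mu(x)$ already established,
\[
\pt_{\mu-\delta_o}(x)=\pt_\mu(x)-k_{d-2}(|x-o|)\geq \mu(\RR^d)\,k_{d-2}(\rho)-k_{d-2}\Bigl(\sup_{x'\in L}|x'-o|\Bigr),
\]
and an infimum over $x\in L$ delivers \eqref{{pmul}o}.

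There is no genuine obstacle here: both estimates are immediate consequences of kernel monotonicity together with $\mu\geq 0$. The only points requiring care are the uniform verification of monotonicity across the three regimes $d=1,2,\geq 3$, the finiteness of the potentials on $L$ in the nondegenerate case $\dist(L,\supp \mu)>0$, and the correct interpretation of the bounds in the degenerate case $\dist(L,\supp \mu)=0$, where the right-hand side collapses to $-\infty$ (for $d\geq 2$) and the claim is vacuous.
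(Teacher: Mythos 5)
Your proof is correct and follows essentially the same route as the paper's: the monotonicity of the kernel $k_{d-2}$ from \eqref{{kK}k} together with positivity of $\mu$ gives \eqref{{pmul}i}, and splitting $\pt_{\mu-\delta_o}=\pt_\mu-k_{d-2}(|\cdot-o|)$ with $\inf(f-g)\geq \inf f-\sup g$ gives \eqref{{pmul}o}. Your explicit treatment of the degenerate case $\dist(L,\supp\mu)=0$ and of $d=1$ matches the paper's disposal of those cases as trivial.
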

\begin{proof} The case $d=1$ is trivial.  Consider the cases $d\geq 2$.
   If $\dist(L,\supp \mu)=0$, 
then the right-hand sides in the inequalities \eqref{pmul}
are equal to $-\infty$, and the inequalities \eqref{pmul} are true.  Otherwise, by Definition \ref{df:pot}, we obtain
\begin{multline}\label{est:pinf}
 {\pt}_{\mu}(x)=\int k_{d-2}\bigl(|x-y|\bigr)\dd \mu(y)\geq
\inf_{y\in \supp \mu} k_{d-2}\bigl(|x-y|\bigr)
\mu (\RR^d) \\
\geq \inf_{y\in \supp \mu} k_{d-2}\left(\inf_{y\in \supp \mu} |x-y|\right) \mu (\RR^d)=
\mu(\RR^d)k_{d-2}\bigl(\dist(x,\supp \mu)\bigr),
\end{multline}
since the function $k_q$ from \eqref{{kK}k} is \textit{increasing,\/}
which implies the inequality \eqref{{pmul}i}
 after applying the operation $\inf_{x\in L}$ to both sides of inequality \eqref{est:pinf}. Using \eqref{{pmul}i}, we have
\begin{multline*}
\inf_{x\in L} {\pt}_{\mu-\delta_o}(x) \overset{\eqref{{pmu}p}}{=}
\inf_{x\in L} \bigl({\pt}_{\mu}(x)-k_{d-2}
\bigl(|x-o|\bigr)\bigr)\geq 
\inf_{x\in L} {\pt}_{\mu}(x)- \sup_{x\in L}k_{d-2}
\bigl(|x-o|\bigr)
\\
\overset{\eqref{{pmul}i}}{\geq}\mu(\RR^d)k_{d-2}\bigl(\dist(L,\supp \mu)\bigr)-
k_{d-2}\left(\sup_{x\in L}|x-o|\right)
\end{multline*}
which gives the inequality \eqref{{pmul}o}. 

\end{proof}

\subsection{Duality Teorem for $\har(O)$-balayage}

\begin{dualtheorem}[{\rm for $\har(O)$-balayage}]\label{DT1}
If  a measure $\mu \in \Meas_{\comp}^+(O)$ is a\/ $\har (O)$-ba\-la\-ya\-ge of a measure ${\vartheta}\in \Meas_{\comp}^+ (O)$,  then 
\begin{subequations}\label{pmu0}
\begin{align}
{\pt}_{\mu}&\in \sbh_*(\RR^d)\cap 
\har(\RR^d\setminus \supp \mu), 
\tag{\ref{pmu0}p}\label{{pmu0}p}
\\ 
{\pt}_{\mu}&=  {\pt}_{\vartheta} \text{ on $\RR^d \setminus \inhull_O (\supp {\vartheta} \cup \supp \mu \cup)$}.
\tag{\ref{pmu0}=}\label{{pmu0}o}
\end{align}
\end{subequations}

\underline{Conversely}, suppose that there is a subset $S\Subset O$,
and  a function $p$ such that 
\begin{subequations}\label{p}
\begin{align}
p&\overset{\eqref{{pmu0}p}}{\in} \sbh (O)\cap  \har (O\setminus S),
\tag{\ref{p}p}\label{{pmu0+}p}
\\
p&\overset{\eqref{{pmu0}o}}{=} {\pt}_\vartheta \quad\text{on $O\setminus S$}
\quad\text{for a measure $\vartheta \in \Meas^+{\comp}(O)$.}
\tag{\ref{p}=}\label{{pmu0+}o}.
\end{align}
\end{subequations}
Then the  Riesz measure
\begin{equation}\label{mu}
\mu:=\varDelta_p \overset{\eqref{df:cm}}{:=}c_d\bigtriangleup p \overset{\eqref{{pmu0+}p}}{\in} \Meas^+(\clos S)\subset \Meas_{\comp}^+(O)
\end{equation} 
of this function $p$ is a $\har(O)$-balayage of the measure $\vartheta\in  \Meas^+{\comp}(O)$. 
\end{dualtheorem}
\begin{proof}  The first  property \eqref{{pmu0}p}  is  evidently. 
For each $y\in \RR^d$, the kernel $K_{d-2}(\cdot,  y)$
is harmonic on $\RR^d\setminus \{y\}$. By Proposition \ref{Pr:munuh},
for $h:=K_{d-2}(\cdot , y)$ in \eqref{bh}, we have  
\begin{equation}\label{p0}
{\pt}_{{\vartheta}}(y)=\int K_{d-2}(x, y) \dd {\vartheta} (x)
\overset{ \eqref{bh}}{=}\int K_{d-2}(x, y) \dd \mu (x)={\pt}_{\mu}(y)
\end{equation}
for all  $y\in \inhull_O (\supp \mu \cup \supp {\vartheta})$. This gives \eqref{{pmu0}o}.

In the \underline{opposite direction}, 
we can extend the function $p$ to $\RR^d$ so that 
$p={\pt}_{\vartheta}$ on $\RR^d\setminus S$. In view of \eqref{p}, 
 we have $p\in \sbh(\RR^d)\cap \har(\RR^d\setminus S)$, and, by Proposition \ref{prO},
\begin{equation}\label{asp}
p(x)-\vartheta (O)k_{d-2}(|x|)\overset{\eqref{pnu0}}{=}
p(x)-{\pt}_{\vartheta}(x)+O\bigl(1/ |x|^{d-1}\bigr) \overset{\eqref{{pmu0+}o}}{=} 
O\bigl(1/ |x|^{d-1}\bigr), \quad x\to  \infty. 
\end{equation}
Hence the function $p$ is a potential with the Riesz measure \eqref{mu}, 
and $\mu(O)=\vartheta (O)$, i.\,e., $p={\pt}_\mu$ (see \cite[Theorem 16]{Arsove53p} and \cite[6.7.2]{HaymanII} for $d=2$, and \cite[3.10]{HK} for $d>2$).  
Further, we can use the following
\begin{lemma}[{\rm \cite[Lemma 1.8]{Gardiner}}]\label{l1} Let $F$ be a compact subset of $\RR^d$, let $h\in \har (F)$, and $b \in \RR_*^+$. Then  there are points $y_1,y_2,\dots, y_m$ in $\RR^d\setminus F$ and constants  $a_1, a_2, \dots , a_m\in \RR$ such that
\begin{equation}\label{hk}
\Bigl|h(x)-\sum_{j=1}^{m} a_j k_{d-2}\bigl(|x-y_j|\bigr)\Bigr|<b
%%\varepsilon 
\quad\text{for all $x\in F$}.
\end{equation}
\end{lemma}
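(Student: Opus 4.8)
The plan is to prove the lemma by a duality (Hahn\,--\,Banach) argument rather than by an explicit construction. Since $h\in \har(F)$ extends to a function harmonic on some open neighborhood $\Omega\supset F$, in particular $h$ is continuous on the compact set $F$, so $h\bigm|_F\in C(F)$. It therefore suffices to show that $h\bigm|_F$ lies in the closure, in the Banach space $C(F)$ with the uniform norm, of the $\RR$-linear span
\[
V:=\Bigl\{\textstyle\sum_{j=1}^{m} a_j\, k_{d-2}\bigl(|\,\cdot\,-y_j|\bigr)\bigm|_F \colon m\in\NN,\ a_j\in\RR,\ y_j\in\RR^d\setminus F\Bigr\}.
\]
Indeed, once $h\bigm|_F\in\clos V$ is established, picking an element of $V$ within uniform distance $b$ of $h$ on $F$ produces exactly the points $y_1,\dots,y_m\in\RR^d\setminus F$ and constants $a_1,\dots,a_m\in\RR$ of \eqref{hk}. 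By the Hahn\,--\,Banach theorem together with the Riesz representation of $C(F)^*$ as the space $\Meas(F)$ of finite signed Borel measures on $F$, the membership $h\bigm|_F\in\clos V$ is equivalent to the implication: \emph{if} $\nu\in\Meas(F)$ annihilates every generator of $V$, \emph{then} $\int h\dd\nu=0$.

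So fix $\nu\in\Meas(F)$ with $\int k_{d-2}\bigl(|\,\cdot\,-y|\bigr)\dd\nu=0$ for all $y\in\RR^d\setminus F$. Viewing $\nu\in\Meas_{\comp}(\RR^d)$, Definition \ref{df:pot} gives the potential $\pt_\nu\in\dsbh_*(\RR^d)$, and by symmetry of the kernel $k_{d-2}(|x-y|)$ the annihilation condition reads precisely $\pt_\nu(y)=0$ for every $y\in\RR^d\setminus F$. Because $F$ is compact, $\dist(y,F)>0$ for each such $y$, so the defining integral converges absolutely and $\pt_\nu$ vanishes identically (not merely a.e.) on the open set $\RR^d\setminus F$; hence the locally integrable function $\pt_\nu$ is supported in $F$. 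Recalling $\varDelta_{\pt_\nu}=\nu$, i.e. $c_d\bigtriangleup\pt_\nu=\nu$ in the sense of distributions (cf. \eqref{df:cm}), I now have a compactly supported distribution $\pt_\nu$ with $\supp\pt_\nu\subset F$ whose Laplacian is a multiple of $\nu$.

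The crux is then a single integration by parts. Choose a cutoff $\chi\in C^\infty(\Omega)$ with compact support in $\Omega$ and $\chi\equiv1$ on a neighborhood of $F$. Then
\[
\int h\dd\nu=\langle\nu,\chi h\rangle=c_d\,\langle\bigtriangleup\pt_\nu,\ \chi h\rangle=c_d\,\langle\pt_\nu,\ \bigtriangleup(\chi h)\rangle=0,
\]
where the first equality uses $\chi\equiv1$ on $\supp\nu\subset F$, the middle two use $c_d\bigtriangleup\pt_\nu=\nu$ and the definition of the distributional Laplacian, and the last holds because $\supp\pt_\nu\subset F$ lies inside the region where $\chi h=h$ and hence $\bigtriangleup(\chi h)=\bigtriangleup h=0$. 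This is exactly the implication required above, so $h\bigm|_F\in\clos V$ and the lemma follows. The argument is uniform in the dimension: for $d=1$ the kernel is $|x-y|$, for $d=2$ it is $\ln|x-y|$, and for $d>2$ it is the Newtonian kernel, all covered by $k_{d-2}$.

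The main obstacle is the distributional bookkeeping of the third paragraph, namely verifying rigorously that $\pt_\nu$ is genuinely supported in $F$ (so that pairing against $\bigtriangleup(\chi h)$, which vanishes on a neighborhood of $F$, gives zero) and that the pairing is legitimate even though $h$ is only smooth on $\Omega$; the cutoff $\chi$ is what makes this precise. If one prefers an explicit construction, I would instead take a smooth domain $G$ with $F\Subset G\Subset\Omega$, represent $h(x)$ for $x\in F$ by Green's formula as the sum of a single- and a double-layer potential of $h$ and $\partial h/\partial n$ over $\partial G$, and approximate both boundary integrals by Riemann sums. These converge uniformly on $F$ since $\dist(F,\partial G)>0$; the single layer already produces linear combinations $\sum a_j k_{d-2}(|\,\cdot\,-y_j|)$ with $y_j\in\partial G\subset\RR^d\setminus F$, while each double-layer term $\partial_{n_y}k_{d-2}(|x-y|)$ is replaced by a finite difference $\tfrac1\varepsilon\bigl(k_{d-2}(|x-(y+\varepsilon n)|)-k_{d-2}(|x-y|)\bigr)$ whose poles remain outside $F$ for small $\varepsilon$. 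This yields the same conclusion constructively.
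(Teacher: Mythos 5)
Your proof is correct. Note first that the paper does not actually prove Lemma \ref{l1}: it is imported verbatim as \cite[Lemma 1.8]{Gardiner}, so there is no in-paper argument to conflict with; your write-up supplies a self-contained proof of the cited fact. The route you take — Hahn\,--\,Banach plus the Riesz representation of $C(F)^*$ by $\Meas(F)$, reducing everything to showing that a measure $\nu\in\Meas(F)$ annihilating all kernels $k_{d-2}(|\cdot-y|)$, $y\in\RR^d\setminus F$, must annihilate $h$ — is the standard duality proof of such Runge-type lemmas in potential theory, and all the delicate points are handled: the annihilation hypothesis is exactly $\pt_{\nu}\equiv 0$ pointwise on $\RR^d\setminus F$ (kernel symmetry plus absolute convergence off $F$), so the locally integrable function $\pt_{\nu}$ has distributional support in $F$; the identity $c_d\bigtriangleup \pt_{\nu}=\nu$ holds in $\mathcal{D}'(\RR^d)$ (a routine Fubini argument you leave implicit, legitimately, since $|\nu|$ is finite with compact support and the kernel is locally integrable); and the cutoff $\chi$ makes the pairing $\langle \pt_{\nu},\bigtriangleup(\chi h)\rangle$ well defined and equal to zero because $\bigtriangleup(\chi h)$ vanishes on a neighborhood of $F\supset\supp\pt_{\nu}$. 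It is worth remarking that this argument uses crucially that the poles are allowed anywhere in $\RR^d\setminus F$ (including in holes of $F$), which is why no topological hypothesis on $\RR^d_\infty\setminus F$ is needed — consistent with the statement being proved. The constructive alternative you sketch at the end (Green's representation over a smooth intermediate boundary, Riemann sums, difference quotients for the double layer) is essentially the proof given in Gardiner's book, so either route is a faithful substitute for the citation.
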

Applying Lemma \ref{l1} to the compact set 
$F\overset{\eqref{{pmu0+}p}}{:=}\clos S\cup \supp \vartheta \Subset O$
and a function $h\in \har (O)$, we obtain
\begin{multline*}
\Bigl|\int_{F} h \dd  (\mu -\vartheta)\Bigl|\overset{\eqref{{pmu0+}o}}{=}
\Bigl|\int_{F} h \dd (\mu -\vartheta)-\sum_{j=1}^{m}a_j \bigl({\pt}_{\mu} (y_j)- {\pt}_{\vartheta}(y_j)\bigr) \Bigl|
\\
\leq \sup_{x\in F} \Bigl| h (x)-\sum_{j=1}^{m} a_jk_{d-2}\bigl(|x-y_j|\bigr)\Bigr| \bigl(\mu(O)+\vartheta (O)\bigr)
\leq b %%\varepsilon 
\bigl(\mu(O)+\vartheta (O)\bigr)
\end{multline*}
for each  $b %%\varepsilon 
\in \RR_*^+$. Hence the measure $\mu$  is a $\har(O)$-balayage of $\vartheta$.   
\end{proof}

\begin{corollary}
Let  $\vartheta, \mu \in \Meas_{\comp}(O)$, $\supp  \vartheta \cup \supp  \mu \subset S \Subset O$. If $\mu$ is  a  balayage of $\vartheta$ for the class
\begin{equation}\label{Hk}
H=\bigl\{\pm k_{d-2} \bigl(|y-\cdot |\bigr)\colon y\in \RR^d \setminus \clos S \bigr\},
\end{equation}
then  $\mu$ is  a  $\har(O)$-balayage of $\vartheta$.
\end{corollary}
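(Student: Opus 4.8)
The plan is to reduce the statement to the Lemma \ref{l1} approximation argument already used in the converse part of Duality Theorem \ref{DT1}, exploiting that the class $H$ from \eqref{Hk} is symmetric, $H=-H$.

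First I would record what the hypothesis gives on the level of potentials. For every $y\in \RR^d\setminus \clos S$ the function $x\mapsto k_{d-2}(|y-x|)=K_{d-2}(x,y)$ is harmonic in a neighbourhood of $\clos S\supset \supp \vartheta\cup \supp \mu$, so both $\pm k_{d-2}(|y-\cdot|)$ lie in $\sbh(S)$ and the integrals $\int k_{d-2}(|y-x|)\dd\vartheta(x)=\pt_{\vartheta}(y)$ and $\int k_{d-2}(|y-x|)\dd\mu(x)=\pt_{\mu}(y)$ are finite. Since $\mu$ is a balayage of $\vartheta$ for $H$ and $H=-H$, the inequality \eqref{balnumu} applied to $h$ and to $-h$ yields the equality \eqref{balnumu=}, so
\begin{equation*}
\pt_{\vartheta}(y)=\pt_{\mu}(y)\qquad\text{for all }y\in \RR^d\setminus \clos S .
\end{equation*}

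Next I would fix an arbitrary $h\in \har(O)$ and transfer this pointwise equality of potentials into an integral identity against $h$. Set $F:=\clos S\Subset O$; then $h\in \har(F)$ and $\supp(\mu-\vartheta)\subset S\subset F$, so $\int h\dd(\mu-\vartheta)=\int_F h\dd(\mu-\vartheta)$. For a given $b\in \RR_*^+$, Lemma \ref{l1} supplies points $y_1,\dots,y_m\in \RR^d\setminus F$ and constants $a_1,\dots,a_m\in\RR$ with the uniform estimate \eqref{hk} on $F$. Because each $y_j\notin \clos S$, we have $\pt_{\mu}(y_j)=\pt_{\vartheta}(y_j)$, hence $\sum_{j=1}^m a_j\bigl(\pt_{\mu}(y_j)-\pt_{\vartheta}(y_j)\bigr)=0$, and exactly as in the proof of Duality Theorem \ref{DT1},
\begin{equation*}
\Bigl|\int_F h\dd(\mu-\vartheta)\Bigr|=\Bigl|\int_F\Bigl(h(x)-\sum_{j=1}^m a_j k_{d-2}(|x-y_j|)\Bigr)\dd(\mu-\vartheta)(x)\Bigr|\leq b\,|\mu-\vartheta|(F).
\end{equation*}
Since $|\mu-\vartheta|(F)\leq |\mu|(O)+|\vartheta|(O)<+\infty$ is a fixed finite constant and $b\in\RR_*^+$ is arbitrary, letting $b\to 0$ forces $\int h\dd\vartheta=\int h\dd\mu$. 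As $h\in\har(O)$ was arbitrary, $\mu$ is a $\har(O)$-balayage of $\vartheta$.

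The argument is essentially routine once it is set up; the only point worth care is that, unlike the converse direction of Duality Theorem \ref{DT1}, no positivity of $\vartheta$ or $\mu$ is needed here. Indeed I never reconstruct $\mu$ as a Riesz measure of a subharmonic potential (which would require $\mu\geq 0$); instead the hypothesis hands me the equality of potentials off $\clos S$ directly, and only the finiteness of the total variations $|\mu|,|\vartheta|$ enters the Lemma \ref{l1} estimate, so the proof goes through verbatim for signed charges. The main (mild) obstacle is simply to notice that $H=-H$ converts the one-sided balayage inequality into the two-sided potential equality that feeds the approximation step.
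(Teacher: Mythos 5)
Your proof is correct and follows essentially the same route as the paper: the symmetry $H=-H$ of the class \eqref{Hk} upgrades the balayage inequality to the potential equality ${\pt}_{\vartheta}={\pt}_{\mu}$ on $\RR^d\setminus\clos S$ (the paper's \eqref{p0}), and the conclusion then comes from the Lemma \ref{l1} approximation argument used in the converse part of Duality Theorem \ref{DT1}. The only difference is presentational, and it is to your credit: the paper simply cites Duality Theorem \ref{DT1}, whose converse part is formally stated for \emph{positive} measures, whereas you inline the relevant approximation step and thereby make explicit that the argument applies verbatim to signed charges $\vartheta,\mu\in\Meas_{\comp}(O)$, with the bound $b\bigl(\mu(O)+\vartheta(O)\bigr)$ replaced by $b\,|\mu-\vartheta|(\clos S)$.
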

\begin{proof} We have \eqref{p0}  for all $y\in \RR^d\setminus \clos S$. 
 By Duality Theorem \ref{DT1}, $\vartheta \preceq_{\har(O)} \mu$.   
\end{proof}

\subsection{Arens\,--\,Singer measures and their potentials}\label{ASP} 

If $x\in O$ and $\vartheta:=\delta_x\preceq_{\har(O)}\mu\in \Meas^+_{\comp}(O)$, i.\,e., $\mu $ is a Arens\,--\,Singer measure for $x\in O$ from  Example \ref{sbhAS},  then potential 
\begin{equation}\label{ASp}
{\pt}_{\mu-\delta_x}(y)={\pt}_{\mu}(y)-K_{d-2}(x, y), \quad y\in \RR^d\setminus \{x\}, 
\end{equation}
satisfies conditions 
\begin{equation}\label{ASpc}
\begin{split}
{\pt}_{\mu-\delta_x}&\in \sbh(\RR^d_{\infty}),\quad  {\pt}_{\mu-\delta_x}(\infty):=0,
\\ 
 {\pt}_{\mu-\delta_x}&\equiv 0 \quad\text{on $\RR^d_{\infty} \setminus \inhull_O \bigl(\{x\} \cup \supp \mu \bigr)$}\\
{\pt}_{\mu-\delta_x}(y)&\leq -K_{d-2}(x,y)+O(1)\quad\text{as $x\neq y\to x$}. 
\end{split}
\end{equation}
Recall that the function $V\in \sbh_*\bigl(\RR^d_{\infty}\setminus \{x\}\bigr)$  
is called a {\it Arens\,--\,Singer potential  on $O$ with pole at\/} $x\in O$  \cite[3]{Gamelin}, \cite{Sarason}, \cite{Kha96}, \cite{Kha01II}, \cite{Kha03}, \cite[Definition 6]{Kha07},  \cite[\S~4]{KudKha09}, if this function $V$ satisfies conditions
\begin{equation}\label{ASpc+}
\begin{split}
V&\equiv 0 \quad\text{on $\RR^d_{\infty} \setminus S(V))$ for a subset $S(V)\Subset O$},\\
V(y)&\leq -K_{d-2}(x,y)+O(1)\quad\text{for $x\neq y\to x$}. 
\end{split}
\end{equation}
The class of all Arens\,--\,Singer potential  on $O$ with pole at $x\in O$ denote by $PAS_x(O)$. In this class $PAS_x(O)$ we will consider a special subclass 
\begin{equation}\label{PAS1}
PAS_x^1(O):=\bigl\{ V\in PAS_x(O)\colon V(y)
=-K_{d-2}(x,y) +O(1) \text{ for $x\neq y\to x$}\bigr\}
\end{equation}

By Duality Theorem \ref{DT1}, we have
\begin{dualtheoA}[{\rm \cite[Proposition 1.4, Duality Theorem]{Kha03}}] The mapping 
\begin{equation}\label{mcP}
\mathcal P_x \colon \mu \longmapsto {\pt}_{\mu-\delta_x}
\end{equation}
is the affine bijection from $AS_x(O)$ onto $PAS_x(O)$ with inverse mapping
\begin{equation}\label{P-1}
\mathcal P_x^{-1} \colon V
\overset{\eqref{df:cm}}{\longmapsto} 
c_d {\bigtriangleup}V \bigm|_{\RR^d\setminus \{x\}}+\left(1-\limsup_{x\neq y\to x} \frac{V(y)}{-K_{d-2}(x,y)}\right)\cdot \delta_x.
\end{equation}
Let $x\in \Int Q=Q\Subset O$. The restriction of $\mathcal P_x$ to the class 
\begin{equation}\label{interAi0}
\bigl\{\mu \in AS_x(O)\colon \supp \mu \cap Q=\varnothing \bigr\}
\end{equation}
define a bijection from class \eqref{interAi0}  onto class\/
{\rm (see \eqref{PAS1})}
\begin{equation}\label{PVl0}
PAS_x^1(O) \bigcap \har \bigl(Q\setminus \{x\}\bigr). 
\end{equation}
The restriction of $\mathcal P_x$ to the class 
\begin{equation}\label{interAi}
\bigl\{\mu \in AS_x(O)\colon \supp \mu \cap Q=\varnothing \bigr\}
\bigcap  \bigl(C^{\infty}(O) \dd \lambda_d \bigr)
\end{equation}
define also  a bijection from class \eqref{interAi}  onto class
\begin{equation}\label{PVl}
PAS_x^1(O) \bigcap \har \bigl(Q\setminus \{x\}\bigr) \bigcap C^{\infty} \bigl(O\setminus \{x\}\bigr). 
\end{equation}
\end{dualtheoA}
 This transition from the main bijection $\mathcal P_x$ to the bijection
from \eqref{interAi0} onto \eqref{PVl0} or 
 from \eqref{interAi} onto  \eqref{PVl} by restriction of $\mathcal P_x$  to \eqref{interAi0} or \eqref{interAi}  is quite obvious.

\subsection{A generalization of Poisson\,--\,Jensen formula}\label{PJ} 

\begin{theorem}[{\rm generalized Poisson\,--\,Jensen formula for $\har(O)$-balayage}]\label{PJf}
Let a measure $\mu\in \Meas_{\comp}^+(O)$ be a $\har(O)$-balayage of  a measure ${\vartheta} \in \Meas_{\comp}^+(O) $.  If $u\in \sbh (O)$ is a function with  the Riesz measure $\varDelta_u\overset{\eqref{df:cm}}{:=}c_d\bigtriangleup u \in \Meas^+(O)$, 
then 
\begin{equation}\label{fPJ}
\int u\dd \vartheta+\int_{K} {\pt}_{\mu} \dd \varDelta_u=
\int_{K} {\pt}_{\vartheta} \dd \varDelta_u +\int u\dd \mu, 
\quad K:=\inhull_{O}(\supp \vartheta \cup \supp \mu).
\end{equation}
In particular, if 
\begin{equation}\label{tfin}
\int u\dd \vartheta > -\infty, 
\end{equation} 
then \eqref{fPJ} can be written as
\begin{equation}\label{fPJ+}
\int u\dd \vartheta=\int u\dd \mu-\int_{K} {\pt}_{\mu-\vartheta} \dd \varDelta_u.
\end{equation}

\end{theorem}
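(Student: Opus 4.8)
The plan is to reduce \eqref{fPJ} to a reciprocity (symmetry) identity for mutual potentials, exploiting two facts: the $\har(O)$-balayage annihilates the harmonic part of $u$, and Duality Theorem \ref{DT1} forces $\pt_\mu$ and $\pt_\vartheta$ to coincide off $K$. Since $K=\inhull_O(\supp\vartheta\cup\supp\mu)$ is a compact subset of $O$ by Proposition \ref{KOc}\eqref{Ki} and contains $\supp\mu\cup\supp\vartheta$, I first fix an open set $\omega$ with $K\Subset\omega\Subset O$. On $\omega$ the Riesz decomposition theorem gives a splitting $u=\pt_{\varDelta_u|_\omega}+h$ with $h\in\har(\omega)$, where $\pt_{\varDelta_u|_\omega}(x)=\int_\omega K_{d-2}(x,y)\dd\varDelta_u(y)$ and $\varDelta_u(\omega)<+\infty$; here $c_d\bigtriangleup\pt_{\varDelta_u|_\omega}=\varDelta_u|_\omega$, matching the normalization of Definition \ref{df:cm}.

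Next I insert this decomposition into the two $u$-integrals of \eqref{fPJ}. Because $h$ is harmonic on the open set $\omega\supset K$, its restriction lies in $\har(K)$, so Proposition \ref{Pr:munuh} gives $\int h\dd\vartheta=\int h\dd\mu$, a finite quantity which cancels on the two sides. It then remains to establish the purely potential-theoretic identity
\[
\int\pt_{\varDelta_u|_\omega}\dd\vartheta+\int_K\pt_\mu\dd\varDelta_u=\int_K\pt_\vartheta\dd\varDelta_u+\int\pt_{\varDelta_u|_\omega}\dd\mu .
\]
Applying Tonelli's theorem to the symmetric kernel $K_{d-2}(x,y)=K_{d-2}(y,x)$ (to $-K_{d-2}\geq 0$ when $d\geq 3$, directly when $d=1$, and after subtracting an upper bound of $K_0$ on the relevant compact set when $d=2$), I obtain $\int\pt_{\varDelta_u|_\omega}\dd\vartheta=\int_\omega\pt_\vartheta\dd\varDelta_u$ and likewise $\int\pt_{\varDelta_u|_\omega}\dd\mu=\int_\omega\pt_\mu\dd\varDelta_u$. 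The displayed identity thereby collapses to $\int_{\omega\setminus K}\pt_\vartheta\dd\varDelta_u=\int_{\omega\setminus K}\pt_\mu\dd\varDelta_u$, which is immediate from Duality Theorem \ref{DT1}\eqref{{pmu0}o}, since $\omega\setminus K\subset\RR^d\setminus K$ and there $\pt_\mu=\pt_\vartheta$. This proves \eqref{fPJ} as an equality in $\RR_{-\infty}$ (every term being bounded above, as $\pt_\mu,\pt_\vartheta\in\sbh_*(\RR^d)$ are bounded above on $\clos\omega$ and $u$ is bounded above on $\supp\mu\cup\supp\vartheta$).

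For the concluding form \eqref{fPJ+}, assume \eqref{tfin}. The decomposition gives $\int\pt_{\varDelta_u|_\omega}\dd\vartheta=\int u\dd\vartheta-\int h\dd\vartheta$, hence $\int_\omega\pt_\vartheta\dd\varDelta_u$ is finite; since $\pt_\vartheta$ is bounded above on the compact set $\clos\omega$, both $\int_K\pt_\vartheta\dd\varDelta_u$ and $\int_{\omega\setminus K}\pt_\vartheta\dd\varDelta_u$ are then finite. Finiteness of $\int_K\pt_\vartheta\dd\varDelta_u$ legitimizes subtracting it when passing from \eqref{fPJ} to \eqref{fPJ+}, and it also lets me identify $\int_K\pt_\mu\dd\varDelta_u-\int_K\pt_\vartheta\dd\varDelta_u$ with $\int_K\pt_{\mu-\vartheta}\dd\varDelta_u$ for the $\delta$-subharmonic potential $\pt_{\mu-\vartheta}=\pt_\mu-\pt_\vartheta$ of \eqref{pmunu}. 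Rearranging \eqref{fPJ} then yields \eqref{fPJ+} in $\RR_{-\infty}$.

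I expect the main obstacle to be the measure-theoretic bookkeeping rather than the structure: justifying the Fubini/Tonelli exchange uniformly across dimensions (the sign of $K_{d-2}$ depends on $d$) and carefully tracking finiteness in $\RR_{\pm\infty}$ so that no illegitimate $\infty-\infty$ enters, especially when $\varDelta_u$ may charge the polar sets where the potentials equal $-\infty$. The structural core — the harmonic remainder killed by Proposition \ref{Pr:munuh}, the reciprocity of mutual potentials, and the vanishing of $\pt_{\mu-\vartheta}$ off $K$ via Duality Theorem \ref{DT1} — is otherwise short.
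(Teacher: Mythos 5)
Your proof is correct and follows essentially the same route as the paper's: Riesz decomposition of $u$ on an open neighborhood $O'$ with $K\Subset O'\Subset O$, annihilation of the harmonic remainder via Proposition \ref{Pr:munuh}, the Fubini/Tonelli exchange for the symmetric kernel $K_{d-2}$, and the coincidence ${\pt}_\mu={\pt}_\vartheta$ off $K$ from Duality Theorem \ref{DT1}. The only (minor) difference is organizational: you establish \eqref{fPJ} directly as an identity in $\RR_{-\infty}$, thereby avoiding the paper's separate treatment of the case where \eqref{tfin} fails, which the paper needs because it argues via the difference \eqref{if} rather than the additive form.
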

\begin{proof} 
Consider first the case \eqref{tfin}. Choose an  open set $O'$ such that $K\Subset O'\Subset O$. By the Riesz decomposition theorem
$u={\pt}_{\nu'}+h$ on $O'$, where $\nu':=\varDelta_u\bigm|_{O'}$ and  $h\in \har (O' )$.
Integrating this representation with respect to $\dd \vartheta$ and $\dd \mu$, we obtain 
\begin{subequations}\label{utm}
\begin{align}
\int u \dd \mu&= \int {\pt}_{\nu'} \dd \mu +\int h \dd \mu,
\tag{\ref{utm}$\mu$}\label{{utm}m}
\\
\int u \dd \vartheta &= \int {\pt}_{\nu'} \dd \vartheta  +\int h \dd \vartheta, 
\tag{\ref{utm}$\vartheta$}\label{{utm}o}
\end{align}
\end{subequations}
where the three integrals in \eqref{{utm}o}  are finite, although in the equality \eqref{{utm}m} the first two integrals can take simultaneously the value of $-\infty$, but the last integral in \eqref{{utm}m} is finite.  Therefore, the difference \eqref{{utm}m}$-$\eqref{{utm}o} of these two equalities is well defined:
\begin{equation}\label{if}
\int u \dd \mu-\int u \dd \vartheta= \int {\pt}_{\nu'}\dd \mu-
 \int {\pt}_{\nu'} \dd \vartheta  +\int h \dd (\mu-\vartheta),
\end{equation}
 where the first and third integrals can simultaneously take the value of $-\infty$, and the remaining integrals are finite. By Proposition \ref{Pr:munuh}, the\textit{ last integral\/} in \eqref{if} \textit{vanishes.\/}  
Using Fubini's theorem, in view of the symmetry property of kernel  
in \eqref{{pmu}p},  we have
\begin{multline}\label{ct}
\int {\pt}_{\nu'}\dd \vartheta=\int \int 
K_{d-2}(y,x) \dd \nu' (y)\dd \vartheta (x)\\
=\int  \int K_{d-2}(x,y) \dd \vartheta (x) \dd \nu' (y)=\int_{O'}{\pt}_{\vartheta} \dd \varDelta_u.
\end{multline}
and the same way
\begin{multline}\label{chmu}
\int {\pt}_{\nu'}\dd \mu
=\int \int K_{d-2}(y,x) \dd \nu' (y)\dd \mu (x)\\
=\int  \int K_{d-2}(x,y) \dd \mu (x) \dd \nu' (y)=
\int_{O'}{\pt}_{\mu} \dd \varDelta_u
\end{multline}
even if the integral on the left side of equalities \eqref{chmu} takes the value  $-\infty$ because the integrand $K_{d-2}(\cdot, \cdot )$ is bounded from above on the compact set  $\clos O'\times \clos O'$ \cite[Theorem 3.5]{HK}. Hence  equality  \eqref{if} can be rewritten as
\begin{equation*}
\int u \dd \mu-\int u \dd \vartheta= \int_{O'}{\pt}_{\mu} \dd \varDelta_u-
 \int_{O'}{\pt}_{\vartheta} \dd \varDelta_u 
=\int_{K}{\pt}_{\mu} \dd \varDelta_u-
 \int_{K}{\pt}_{\vartheta} \dd \varDelta_u 
\end{equation*}
since ${\pt}_{\mu}={\pt}_{\vartheta}$ on $O'\setminus K$. This gives equality \eqref{fPJ}
 in the case \eqref{tfin}. 

If condition\eqref{tfin}  is not fulfilled, then from the representation \eqref{{utm}o} it follows that the integral on the left-hand side of \eqref{ct} also takes the value $-\infty$. 
The equalities \eqref{ct} is still true  \cite[Theorem 3.5]{HK}. Hence, the first integral on the right side of the
formula \eqref{fPJ} also takes the value $-\infty$ and this formula \eqref{fPJ} remains true. 
\end{proof}
\begin{remark}
If $\vartheta :=\delta_x$ and $\mu:= \omega_D(x, \cdot)$ for $x\in D\Subset O$, then 
the formula  \eqref{fPJ+} is the classical  Poisson\,--\,Jensen formula \cite[Theorem 5.27]{HK}
\begin{multline*}\label{clPJ}
u(x)=\int u\dd \delta_x\overset{\eqref{fPJ+}}{=}
\int_{\partial D} u\dd \omega_D(x, \cdot)-
\int_{\clos D} {\pt}_{\omega_D(x, \cdot)-\delta_x} \dd \varDelta_u
\\=\int_{\partial D} u\dd \omega_D(x, \cdot)-
\int_{\clos D} g_{D}(\cdot, x) \dd \varDelta_u, \quad x\in D,
\end{multline*}
since 
$\delta_x\preceq_{\sbh(O)} \omega_D(x, \cdot)$ according to Example \ref{sbhAS}, 
and  the equalities 
\begin{equation*}
{\pt}_{\omega_D(x, \cdot)-\delta_x}={\pt}_{\omega_D(x, \cdot)}-{\pt}_{\delta_x}\overset{\eqref{{pmu}p}}{=}{\pt}_{\omega_D(x, \cdot)}-K_{d-2}(x,\cdot)=g_D(\cdot , x), \quad x\in D,
\end{equation*}  
are well known \cite[Ch.~4,\S~1,2]{Landkoff}. 
\end{remark}

\subsection{Duality Theorem for $\sbh(O)$-balayage}

\begin{dualtheorem}[{\rm for $\sbh(O)$-balayage}]\label{DTsbh}
If  a measure $\mu \in \Meas_{\comp}^+(O)$ is a\/ $\sbh (O)$-bala\-ya\-ge of a measure ${\vartheta}\in \Meas_{\comp}^+ (O)$,  then we have \eqref{pmu0}, and 
\begin{equation}\label{pmu0+}
{\pt}_{\mu}\geq {\pt}_{\vartheta} \quad \text{on $\RR^d$.}
\end{equation}
\underline{Conversely}, suppose that there is a subset $S\Subset O$,
and  a function $p$ such that we have \eqref{p}, and $p\geq {\pt}_{\vartheta}$ on $\clos S$.
Then the  Riesz measure \eqref{mu} of  $p$ is a $\sbh(O)$-balayage of $\vartheta$.
\end{dualtheorem} 
\begin{proof} If $\vartheta \preceq_{\sbh(O)}\mu$, then 
$\vartheta \preceq_{\har(O)}\mu$ and we have properties \eqref{pmu0} by Duality Theorem \ref{DT1}. For each $y\in \RR^d$, the function $K_{d-2}(\cdot, y)$
is subharmonic on $\RR^d$ and  \eqref{pmu0+} follows from  Definitions \ref{df:1} and \ref{df:pot}.  Conversely, if  a function $p$ is such as in \eqref{p},  then, by Duality Theorem \ref{DT1}, this function is a potential ${\pt}_{\mu}=p$ with the Riesz measure  \eqref{mu}, this measure $\mu\in \Meas^+_{\comp}(O)$ is a $\har(O)$-balayage for $\vartheta$, and $K:=\inhull (\supp \vartheta\cup \supp \mu)\subset  \clos S$. 
 Let $u\in \sbh_*(O)$. It follows from ${\pt}_\mu\geq {\pt}_{\vartheta}$ on $K$ that 
$\int_K {\pt}_{\vartheta} \dd \varDelta_u  \leq \int_K {\pt}_{\mu} \dd \varDelta_u$. Hence, by  the generalized Poisson\,--\,Jensen formula \eqref{fPJ}  from Theorem \ref{PJf},  we obtain
 $\int u \dd \vartheta \leq \int u\dd \mu$. 
\end{proof}

\subsection{Jensen measures and their potentials}\label{JP} 

By Example \ref{sbhJ}, if we choose $x\in O$ and $\vartheta:=\delta_x\preceq_{\sbh(O)}\mu\in \Meas^+_{\comp}(O)$, i.\,e., $\mu $ is a Jensen  measure for $x\in O$, then potential 
${\pt}_{\mu-\delta_x}(y)={\pt}_{\mu}(y)-K_{d-2}(x, y)$, $y\in \RR^d\setminus \{x\}$ ,
satisfies conditions \eqref{ASpc} and ${\pt}_{\mu-\delta_x}\geq 0$ on $\RR^d_{\infty}\setminus \{x\}$.

Recall  that a \textit{positive}  function $V\in \sbh^+\bigl(\RR^d_{\infty}\setminus \{x\}\bigr)$  is called a {\it Jensen potential  on $O$ with pole at\/} $x\in O$   if this function $V$ satisfies conditions \eqref{ASpc+} \cite[3]{Gamelin},  \cite{Anderson}, \cite{Kha03}, 
\cite{MOS}, \cite[Definition 8]{Kha07}, \cite[IIIC]{Koosis96}, \cite{Kha12}, \cite{KhaTalKha15}, \cite{BaiTalKha16}.
The class of all Jensen potential  on $O$ with pole at $x\in O$ denote by $PJ_x(O)\subset AS_x(O)$. In this class $J_x(O)$ we will consider a special subclass 
\begin{equation}\label{PJ1}
PJ_x^1(O)\overset{\eqref{PAS1}}{:=}
PJ_x(O)\bigcap PAS_x^1(O)\subset PAS_x^1(O).
\end{equation} 
By Duality Theorem \ref{DTsbh}, we have
\begin{dualtheoB}[{\rm \cite[Proposition 1.4, Duality Theorem]{Kha03}}] The mapping 
$\mathcal P_x$ defined  in \eqref{mcP} is the aff\-i\-ne bijection from $J_x(O)$ onto $PJ_x(O)$ with inverse mapping \eqref{P-1}.  

Let $x\in \Int Q=Q\Subset O$. The restriction of $\mathcal P_x$ to the class\/ {\rm (cf. \eqref{interAi0})}
\begin{equation}\label{interJi}
\bigl\{\mu \in J_x(O)\colon \supp \mu \cap Q=\varnothing \bigr\}
\end{equation}
define a bijection from class \eqref{interJi}  onto class\/
{\rm (see \eqref{PJ1}, cf. \eqref{PVl0})}
\begin{equation}\label{PVJ}
PJ_x^1(O) \bigcap \har \bigl(Q\setminus \{x\}\bigr). 
\end{equation}
The restriction of $\mathcal P_x$ to the class\/ {\rm (cf. \eqref{interAi})} 
\begin{equation}\label{interJio}
\bigl\{\mu \in J_x(O)\colon \supp \mu \cap Q=\varnothing \bigr\}
\bigcap  \bigl(C^{\infty}(O) \dd \lambda_d \bigr)
\end{equation}
define a bijection from class \eqref{interJio}  onto class\/
{\rm (cf. \eqref{PVl})} 
\begin{equation}\label{PVJo}
PJ_x^1(O)\bigcap \har \bigl(Q\setminus \{x\}\bigr) \bigcap C^{\infty} \bigl(O\setminus \{x\}\bigr). 
\end{equation}
\end{dualtheoB}
 This transition from the main bijection $\mathcal P_x$ to the bijection from \eqref{interJi} onto  \eqref{PVJ} or from \eqref{interJio} onto  \eqref{PVJo} by restriction of $\mathcal P_x$  to \eqref{interJi} or to \eqref{interJio} is quite obvious.

\section{Affine balayage of measures}\label{afm} The following definition is a special case of the general concept of affine balayage \eqref{ba}.
\setcounter{equation}{0}

\begin{definition}[\cite{KhaRozKha19}]\label{def:ab} Let  $O\subset \RR^d$ be an open subset, and  $S_o\Subset D$.  Let $\mathcal V$ be a class of Borel-measurable functions on $O\setminus S_o$. We say that a  measure $\mu\in \Meas^+(O)$ is an {\it affine balayage\/}  of a measure $\upsilon \in \Meas^+(O)$ {\it  for the class\/} $\mathcal V$, or, briefly, $\mu$ is an affine  $\mathcal V$-balayage of $\upsilon$,  
	{\it outside\/} $S_o$ and write $\upsilon
	\curlyeqprec_{S_o,\mathcal V} \mu$ if there exists  a constant 
	$C\in \RR$ such that
	\begin{equation}\label{df:ab}
	\int_{O\setminus S_o}  v\dd \upsilon \leq 
	\int_{O\setminus S_o}  v\dd \mu+C\quad\text{\it for all $v\in \mathcal  V$.}
	\end{equation}
	provided that all integrals are well defined by values from $\RR_{\pm\infty}$.
\end{definition}
\begin{remark}\label{rem:4} If $C=0$ for all $v\in \mathcal V$ in \eqref{df:ab}, then we 
are dealing with the {\it usual\/} $\mathcal V$-balayage from Definition \ref{df:1}.
We study only case, when $O=D$ is domain in $\RR^d$. Transferring these results to the case of $\#\Conn_{\RR_{\infty}^d} O<\infty$ is trivial (see Sec. \ref{hullin}, \eqref{ODj}).

 The case of $\#\Conn_{\RR_{\infty}^d} O=\infty$ is somewhat more complicated and cumbersome in terms of formulations and is not considered here.
\end{remark}

\section{Test subharmonic functions}\label{tsf}
\setcounter{equation}{0}
In the role of class $\mathcal V$, we consider various classes of test subharmonic functions
on $D\setminus S_o$ and their countable completions up \eqref{Fuparrow}, where $\partial D$ is non-polar,  $o\in S_o\neq \varnothing$. 

First, we define  subclasses of $\sbh_*(D\setminus S_o)$ that \textit{ vanish near  the boundary $\partial D$:}
 \begin{subequations}\label{s0}
\begin{align}
\sbh_0(D\setminus S_o):=\Bigl\{ v\in \sbh (D\setminus S_o)
\colon \lim_{D\ni x'\to x} &v(x')=0 \text{ for all $x\in \partial D$}\Bigr\},
\tag{\ref{s0}$_{0}$}\label{{s0}0}\\
\intertext{\it finite near the boundary $\partial D$:}
\hspace{-14pt}\sbh_{00}(D\setminus S_o):=\bigl\{ v\overset{\eqref{{s0}0}}{\in} \sbh (D\setminus S_o)
\colon  \text{there is\;} S(v)\Subset D& \text{\;such that\;} v\equiv 0 \text{\;on $D\setminus S(v)$}\bigr\},
\tag{\ref{s0}$_{00}$}\label{{s0}00}
\\
\intertext{\it positive near the boundary $\partial D$:}
\hspace{-6pt}\sbh_{+}(D\setminus S_o):=\bigl\{ v\in \sbh (D\setminus S_o)
\colon  \text{there is\;} S(v)\Subset D& \text{\;such that\;} v\geq 0 \text{\;on $D\setminus S(v)$}\bigr\},
\tag{\ref{s0}$_{+}$}\label{{s0}+}\\
\sbh_{+0}(D\setminus S_o)\overset{\eqref{{s0}+}}{:=}
\sbh_0(D\setminus S_o)\bigcap \sbh_{+}&(D\setminus S_o)\overset{\eqref{{s0}00}}{\supset} 
\sbh_{00}(D\setminus S_o).
\tag{\ref{s0}$_{+0}$}\label{{s0}+0}
\end{align}
\end{subequations}
\begin{proposition}\label{pr:0} A function $v\overset{\eqref{{s0}+0}}{\in} \sbh_{+0}(D\setminus S_o)$ continues as subharmonic function on $\RR^d_{\infty}\setminus S_o$ by  rule
\begin{equation}\label{pr:0v}
v(x):=\begin{cases}
v(x)&\quad\text{at $x\in D\setminus S_o$},\\ 
0&\quad\text{at $x\in \RR^d_{\infty}\setminus  D$}
\end{cases}
\quad \in \sbh(\RR^d_{\infty}\setminus S_o).
\end{equation}
\end{proposition}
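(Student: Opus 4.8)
The plan is to reduce the statement to a single application of Gluing Theorem~A (Theorem~\ref{gl:th1}), gluing $v$ with the zero function across $\partial D$. Indeed, on $D\setminus S_o$ the extension \eqref{pr:0v} equals $v$, which is subharmonic by hypothesis, and on $\RR^d_{\infty}\setminus\clos D$ it equals the harmonic function $0$; since subharmonicity is a local property, it suffices to exhibit, around every point of $\partial D$, an open neighbourhood on which \eqref{pr:0v} is subharmonic. First I would use the defining condition \eqref{{s0}+} of $\sbh_+$: there is $S(v)\Subset D$ with $v\geq 0$ on $D\setminus S(v)$, and because this persists when $S(v)$ is enlarged I may assume $\clos S_o\subset S(v)\Subset D$. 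Set $G:=\RR^d_{\infty}\setminus\clos S(v)$, an open set that contains $\partial D$ and all of $\RR^d_{\infty}\setminus D$.

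On $G$ I would invoke Theorem~\ref{gl:th1} with $\mathcal O:=G$, $u:=0\in\sbh(G)$, $\mathcal O_0:=G\cap D=D\setminus\clos S(v)$, and $u_0:=v\in\sbh(\mathcal O_0)$, the last membership holding since $D\setminus\clos S(v)\subset D\setminus S_o$. Because $G$ is open, one has $\mathcal O\cap\partial\mathcal O_0=\partial D$, and for $x\in\partial D$ the defining condition \eqref{{s0}0} of $\sbh_0$ gives $\limsup_{\mathcal O_0\ni x'\to x}u_0(x')=\lim_{D\ni x'\to x}v(x')=0=u(x)$, which is precisely the matching hypothesis \eqref{Uu}. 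Theorem~\ref{gl:th1} then produces a function subharmonic on $G$ that equals $\max\{0,v\}$ on $\mathcal O_0$ and $0$ on $G\setminus\mathcal O_0=\RR^d_{\infty}\setminus D$.

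Here the full hypothesis $\sbh_{+0}=\sbh_0\cap\sbh_+$ of \eqref{{s0}+0} becomes decisive: since $v\geq 0$ on $\mathcal O_0=D\setminus\clos S(v)$, we have $\max\{0,v\}=v$ there, so the glued function coincides on $G$ exactly with the extension \eqref{pr:0v}; hence \eqref{pr:0v} is subharmonic on $G$. To conclude I would cover $\RR^d_{\infty}\setminus\clos S_o$ by the two open sets $G$ and $D\setminus\clos S_o$ (on the latter the extension is $v$, subharmonic by hypothesis) and appeal to locality; the remaining points of $\clos S_o\setminus S_o$ lie in $D\setminus S_o$, where the extension is unchanged, so subharmonicity holds on all of $\RR^d_{\infty}\setminus S_o$.

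The main obstacle, and the reason \emph{both} halves of the hypothesis are needed, is the mismatch between what the gluing theorem delivers, $\max\{0,v\}$, and what is claimed, $v$. The $\sbh_0$ part is what makes the boundary values match so that Theorem~\ref{gl:th1} applies at all, while the $\sbh_+$ part is exactly what forces $\max\{0,v\}=v$ in a neighbourhood of $\partial D$. Without positivity near the boundary the extension by zero genuinely fails to be subharmonic: a function tending to $0$ at $\partial D$ but strictly negative just inside would, at a boundary point, have value $0$ strictly above its spherical averages, violating the sub-mean-value inequality. A minor technical point to dispatch is that $D\setminus S_o$ need not be open and $v$ is only given as a restriction, which is why I phrase the final locality step on $\RR^d_{\infty}\setminus\clos S_o$ before filling in the interior points of $\clos S_o\setminus S_o$.
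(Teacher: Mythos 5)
Your reduction to Gluing Theorem A is the natural argument here (the paper itself offers nothing beyond ``The proof is obvious''), and at all finite points you carry it out correctly: taking $u:=0$, $\mathcal O_0:=D\setminus\clos S(v)$ with $S(v)$ enlarged to contain $\clos S_o$, identifying $\mathcal O\cap\partial\mathcal O_0=\partial D$, verifying the matching condition \eqref{Uu} via \eqref{{s0}0}, and then using \eqref{{s0}+} to turn the glued function $\max\{0,v\}$ back into $v$ is exactly the right mechanism; your closing locality step, including the care taken with the possibly non-closed $S_o$, is also sound.

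The genuine gap is at the single point $\infty$. Gluing Theorem A, as stated (after Ransford and Klimek), concerns open subsets of $\RR^d$, whereas your $\mathcal O:=G=\RR^d_{\infty}\setminus\clos S(v)$ contains $\infty$; and $\infty$ cannot be avoided, since it lies in $\RR^d_{\infty}\setminus S_o$ and your covering is what must deliver subharmonicity there. If $\clos D\not\ni\infty$ (bounded $D$) the repair is trivial: run the gluing in $G\cap\RR^d$ and note that \eqref{pr:0v} vanishes identically near $\infty$. If $D$ is unbounded, then $\infty\in\partial D$ and the behaviour at $\infty$ is precisely the delicate point. For $d=2$ it can be fixed: invert, observe that \eqref{{s0}0} makes the extension bounded above near $\infty$ with limit superior equal to $0$, and apply the removable-singularity theorem for subharmonic functions. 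For $d\geq 3$, however, if subharmonicity at $\infty$ is read through the Kelvin transform, as \eqref{stKs} indicates, the assertion at $\infty$ can simply fail: take $d=3$, $D=\{|x|>1\}$, $S_o$ a closed ball in $D$, and $v$ the Perron solution in $D\setminus S_o$ with boundary data $1$ on $\partial S_o$ and $0$ on $\partial D$ (including $\infty$); then $v\in\sbh_{+0}(D\setminus S_o)$, yet a comparison with the harmonic functions $c/|x|$ on large annuli gives $\liminf_{x\to\infty}|x|\,v(x)>0$, so the Kelvin transform of the extension \eqref{pr:0v} is not upper semicontinuous at the image of $\infty$, where \eqref{pr:0v} assigns the value $0$. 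Thus your blanket application of the gluing theorem on $\RR^d_{\infty}$ passes over the only non-obvious part of the claim. In fairness, the paper's ``obvious'' does the same, and its later applications --- e.g. Gluing Theorem \ref{glth5}, which takes $\mathcal O:=\RR^d\setminus\{o\}$ --- use only subharmonicity at the finite points, which your argument does establish.
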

The proof is obvious.
 
 Next we assume that the interior  $\Int S_o$ is  non-empty, i.\,e., there exists a point
 \begin{equation}\label{S0set}
 o\in \Int S_o\subset S_o\Subset D\subset \RR^d.
 \end{equation}
 
 Given constants 
\begin{equation}\label{bbpmr}
-\infty <b_-< 0 < b_+<+\infty, \quad 0<3r<\dist(S_o, \partial D),
\end{equation}
we define  the following classes of \textit{test subharmonic functions}
with different restrictions from above or\,/\,and  below in \eqref{tf}:
\begin{subequations}\label{tf}
\begin{align}
&\sbh_{\dots}(D\setminus S_o; \leq b_+):=\bigl\{v \in \sbh_{\dots}(D\setminus S_o)\colon v\leq  b_+ \text{ on }\partial S_o\bigr\},
\tag{\ref{tf}b}\label{{tf}b}
\\
&\sbh_{\dots}^+(D\setminus S_o; \leq b_+)\overset{\eqref{+S}}{:=}
\sbh_{\dots}(D\setminus S_o; \leq b_+)\bigcap\sbh^+(D\setminus S_o) ,
\tag{\ref{tf}b$^+$}\label{{tf}b+}
\\
\intertext{under notation  \eqref{Scup} for the outer $r$-parallel set, by \eqref{{tf}b},}
&\hspace{-10pt}\sbh_{+0}(D\setminus S_o; r,b_-< b_+):=
 \bigl\{v\overset{\eqref{{s0}+0}}{\in} \sbh_{+0}(D\setminus S_o; \leq b_+)\colon  b_-\leq v \text{ on } S_o^{\cup (3r)}\setminus S_o\bigr\},
\tag{\ref{tf}b$_{\pm}$}\label{{tf}bpm0}\\
&\hspace{-8pt}\sbh_{00}(D\setminus S_o; r,b_-< b_+):=
 \bigl\{v\overset{\eqref{{s0}00}}{\in} \sbh_{00}(D\setminus S_o; \leq b_+)\colon  b_-\leq v \text{ on } S_o^{\cup (3r)}\setminus S_o\bigr\},
\tag{\ref{tf}b$_{\pm}^0$}\label{{tf}bpm}\\
\intertext{and under the designation \eqref{v0} for averaging over spheres,}
&\hspace{-20pt}\sbh_{+0}(D\setminus S_o; \circ r,b_-< b_+):=\bigl\{v\overset{\eqref{{tf}b}}{\in} \sbh_{+0}(D\setminus S_o; \leq b_+)\colon  b_-\leq v^{\circ r}
 \text{ on } S_o^{\cup (2r)}\setminus S_o^{\cup r}\bigr\}.
\tag{\ref{tf}b$^\circ$}\label{{tf}bc}
\end{align}
\end{subequations}
If $\sbh_{\dots}^{\dots}(D\setminus S_o;\dots)$ is a class of test subharmonic functions 
from \eqref{tf}, then 
\begin{equation}\label{vup}
\sbh_{\dots}^{\dots \uparrow}(D\setminus S_o;\dots)
\overset{\eqref{Fuparrow}}{:=}\bigl(\sbh_{\dots}^{\dots}(D\setminus S_o;\dots)\bigr)^{\uparrow}
\end{equation}
is the countable completion \eqref{Fuparrow} of class $\sbh_{\dots}^{\dots}(D\setminus S_o;\dots)$  up.

\begin{proposition}\label{pr:12} We have the following  inclusions:
\begin{equation}\label{incv}
\begin{array}{ccccccc}
&{\sbh_{00}^+(D\setminus S_o,\leq b_+)}& {\subset} 
&{\sbh_{00}(D\setminus S_o; r,b_-< b_+)}&\\
&\cap& &\cap&\\
&{\sbh_{0}^+(D\setminus S_o,\leq b_+)}&\subset &{\sbh_{+0}(D\setminus S_o;  r,b_-< b_+)}&\\
&\cap& &\cap&\\
&{\sbh_{0}^{+\uparrow}(D\setminus S_o,\leq b_+)}&\subset &{\sbh_{+0}^{\uparrow}(D\setminus S_o; \circ r,b_-< b_+)}&
\end{array} 
\end{equation}
All inclusions here, generally speaking, are strict.
\end{proposition}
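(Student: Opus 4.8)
The plan is to verify the seven inclusions of the array \eqref{incv} one at a time, leaning on three elementary facts. From the definitions \eqref{s0} we have $\sbh_{00}(D\setminus S_o)\subset\sbh_{0}(D\setminus S_o)$ and $\sbh_{00}(D\setminus S_o)\subset\sbh_{+0}(D\setminus S_o)$, and, since a function that is $\geq 0$ on all of $D\setminus S_o$ is a fortiori $\geq 0$ near $\partial D$, also $\sbh_0^+(D\setminus S_o)\subset\sbh_{+0}(D\setminus S_o)$. By \eqref{bbpmr} the constant $b_-$ is strictly negative. Finally, the completion \eqref{Fuparrow}, recorded for our classes in \eqref{vup}, is monotone ($G_1\subset G_2\Rightarrow G_1^{\uparrow}\subset G_2^{\uparrow}$) and satisfies $G\subset G^{\uparrow}$ via constant sequences; both are immediate from \eqref{Fuparrow}.

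Five of the inclusions are then immediate. The top-left vertical inclusion $\sbh_{00}^+(D\setminus S_o;\leq b_+)\subset\sbh_0^+(D\setminus S_o;\leq b_+)$ is $\sbh_{00}\subset\sbh_0$ (the constraints $v\leq b_+$ on $\partial S_o$ and $v\geq 0$ being common), and the bottom-left vertical inclusion is the instance $G\subset G^{\uparrow}$ with $G=\sbh_0^+(D\setminus S_o;\leq b_+)$. For the two upper horizontal arrows, every $v$ in the left-hand superscript-$+$ classes satisfies $v\geq 0$ on $D\setminus S_o$; since $S_o^{\cup(3r)}\Subset D$ by \eqref{bbpmr}, the collar $S_o^{\cup(3r)}\setminus S_o$ lies in $D\setminus S_o$, so $v\geq 0>b_-$ there and the lower bound in \eqref{{tf}bpm0}, \eqref{{tf}bpm} holds automatically; combined with $\sbh_0^+\subset\sbh_{+0}$ this yields $\sbh_{00}^+(D\setminus S_o;\leq b_+)\subset\sbh_{00}(D\setminus S_o;r,b_-<b_+)$ and $\sbh_0^+(D\setminus S_o;\leq b_+)\subset\sbh_{+0}(D\setminus S_o;r,b_-<b_+)$. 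The top-right vertical arrow $\sbh_{00}(D\setminus S_o;r,b_-<b_+)\subset\sbh_{+0}(D\setminus S_o;r,b_-<b_+)$ is just $\sbh_{00}\subset\sbh_{+0}$, all remaining constraints coinciding.

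There remain the two inclusions into the completed class $\sbh_{+0}^{\uparrow}(D\setminus S_o;\circ r,b_-<b_+)$. By monotonicity of $(\cdot)^{\uparrow}$ it suffices to prove the uncompleted inclusions $\sbh_0^+(D\setminus S_o;\leq b_+)\subset\sbh_{+0}(D\setminus S_o;\circ r,b_-<b_+)$ and $\sbh_{+0}(D\setminus S_o;r,b_-<b_+)\subset\sbh_{+0}(D\setminus S_o;\circ r,b_-<b_+)$, and then pass to $\uparrow$ on both sides. The geometric core is the claim that for each $x\in S_o^{\cup(2r)}\setminus S_o^{\cup r}$ one has $r\leq\dist(x,S_o)<2r$ — the first inequality because $x\notin S_o^{\cup r}$, the second because $x\in S_o^{\cup(2r)}$ — so the sphere $\partial B(x,r)$ in the mean \eqref{v0} lies in $S_o^{\cup(3r)}$ and meets $S_o$ at most in a $\sigma_{d-1}$-null set (a point at distance $r$ from $x$ can lie in $S_o$ only when $\dist(x,S_o)=r$ and it is a nearest point). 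Hence $\partial B(x,r)\subset S_o^{\cup(3r)}\setminus S_o$ off a null set, and because spherical averaging is monotone with $\sigma_{d-1}(\partial\BB)=s_{d-1}$, any lower bound $v\geq c$ on the collar gives $v^{\circ r}(x)\geq c$. Taking $c=0>b_-$ for $v\in\sbh_0^+(D\setminus S_o;\leq b_+)$ and $c=b_-$ for $v\in\sbh_{+0}(D\setminus S_o;r,b_-<b_+)$, we get $b_-\leq v^{\circ r}$ on $S_o^{\cup(2r)}\setminus S_o^{\cup r}$, which is exactly the defining condition \eqref{{tf}bc}; membership in $\sbh_{+0}(D\setminus S_o;\leq b_+)$ is retained (for the first class via $\sbh_0^+\subset\sbh_{+0}$).

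The one genuinely delicate point is this last estimate at points of the inner boundary $\partial S_o^{\cup r}$, where the averaging sphere can graze $\clos S_o$ and $v$ is a priori undefined on part of it. I would dispatch it exactly as in the proof of Gluing Theorem \ref{gl:th_es}: on the open set $\{\dist(\cdot,S_o)>r\}$ the sphere avoids $S_o$ outright and $v^{\circ r}\geq c$ is clear, and since $v^{\circ r}$ is continuous on $S_o^{\cup(2r)}\setminus S_o^{\cup r}$ (Helms, Theorem 1.14, already invoked there) the inequality extends by continuity to the whole set, including $\partial S_o^{\cup r}$. Strictness of the inclusions ``generally speaking'' is inessential for the applications and is witnessed by elementary examples: a nonnegative test function that merely tends to $0$ at $\partial D$ without vanishing identically near it separates $\sbh_0^+$ from $\sbh_{00}^+$, and a test function dipping below $0$ outside the collar while staying $\geq b_-$ on it separates the right-hand classes from the left-hand superscript-$+$ classes.
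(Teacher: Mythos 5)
Your treatment of the seven inclusions is correct and is, in expanded form, exactly the paper's argument: the paper disposes of them with ``Inclusions immediately follow from Definitions \eqref{bbpmr}--\eqref{vup}'', and your three elementary facts together with the observation that spherical averaging preserves the lower bound $b_-$ valid on the collar $S_o^{\cup(3r)}\setminus S_o$ are precisely what that ``immediately'' hides. One caveat on your ``delicate point'': both of your fixes for the grazing case $\dist(x,S_o)=r$ overreach. The set $\clos S_o\cap \partial B(x,r)$ of nearest points need not be $\sigma_{d-1}$-null: condition \eqref{S0set} allows, say, $S_o$ to consist of the whole sphere $\partial B(x,r)$ together with a small closed ball attached to it, and then the intersection is the entire averaging sphere; in that same example $x$ does not lie in the closure of $\{\dist(\cdot,S_o)>r\}$, so the continuity extension fails as well --- indeed there $v^{\circ r}(x)$ is simply undefined, since $v$ lives only on $D\setminus S_o$. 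This is really an ambiguity in the definition \eqref{{tf}bc} rather than in your argument (under the only sensible reading, the bound is imposed at points where the average exists, and then your core computation already suffices), but the two auxiliary claims you make are not correct as stated.

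The genuine gap concerns the second sentence of the proposition, ``All inclusions here, generally speaking, are strict'', which is part of the statement and occupies most of the paper's proof. You offer (asserted, not constructed) separating functions for the horizontal inclusions and for the first row of vertical inclusions, but nothing at all for the second row of vertical inclusions, i.e.\ for $\sbh_0^{+}(D\setminus S_o;\leq b_+)\subsetneq \sbh_0^{+\uparrow}(D\setminus S_o;\leq b_+)$ and $\sbh_{+0}(D\setminus S_o;r,b_-<b_+)\subsetneq \sbh_{+0}^{\uparrow}(D\setminus S_o;\circ r,b_-<b_+)$. This is the one place where strictness rests on a real fact: the paper obtains it for domains $D$ with irregular boundary points, using that the Green's function $g_D$ need not have limit $0$ at such points even when the limit exists \cite[Lemma 5.6, Theorem 5.19]{HK}, so that an increasing limit of test functions (of the kind built in Proposition \ref{pr:13}) can violate the boundary condition \eqref{{s0}0} defining $\sbh_0$ and hence escape the uncompleted class. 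For the horizontal inclusions the paper also points to a concrete source of witnesses, namely the alternating-sign potentials arising from Example \ref{5}, whereas you only posit ``a test function dipping below $0$ outside the collar''; the idea is right, but a subharmonic function on $D\setminus S_o$ that vanishes near $\partial D$ yet is negative somewhere is not a free object and needs such a construction.
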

\begin{proof} Inclusions immediately follow from Definitions \eqref{bbpmr}--\eqref{vup}.   Example \ref{5} shows that all ``horizontal'' inclusions are strict. The first line of  ``vertical'' inclusions is strict in an obvious way. 
The second line of ``vertical'' inclusions is strict in the case when there are irregular points on the boundary $\partial D$ of the domain $D$ \cite[Lemma 5.6]{HK},  
since the limit values of the Green's function $g_D$ at such points are not zero, even if they exist \cite[Theorem 5.19]{HK}.
\end{proof}

\begin{gluingtheorem}[{\rm for test subharmonic functions}]\label{glth5}
 Let $D$ be a domain together with  \eqref{S0set}, $b_{\pm}, r$ are constants satisfying \eqref{bbpmr}. Then there is a constant
\begin{equation}\label{gDVgv}
B:=2\frac{b_+-b_-}{\const^+_{o,S_o,r}}:=\const^+_{o,S_o,r,b_{\pm}}>0.
\end{equation}
such that for any function $v\in \sbh_{+0} (D\setminus S_o; \circ r,b_-< b_+)$ 
we can construct a subharmonic  function $V\in \sbh_*(\RR_{\infty}^d\setminus \{o\})$ with properties  
\begin{subequations}\label{VKv}
\begin{align}
0<V&\overset{\eqref{gDVh}}{\in} \har^+\bigl(S_o\setminus \{o\}\bigr) \quad\text{on $S_o\setminus \{o\}$},
\tag{\ref{VKv}h}\label{gDVhv}\\
V&\overset{\eqref{gDV=}}{=}v \quad \text{on $D\setminus S^{\cup (3r)}$}, 
\tag{\ref{VKv}=}\label{gDV=v}
\\
v(x)\leq V(x)&\overset{\eqref{gDVleq}}{\leq} b_++ Bg_D(x,o) \quad \text{for all  $x\in  S^{\cup (3r)}\setminus S_o$},
\tag{\ref{VKv}+}\label{gDVleqv}
\\
0< V(x)&\overset{\eqref{gDVleq+}}{\leq} Bg_D(x,o) \quad \text{for all  $x\in  S_o\setminus \{o\}$},
\tag{\ref{VKv}$^+_0$}\label{gDVleqv+}
\\	
V(x)&\overset{\eqref{gDVo}}{=}-BK_{d-2} (x,o)+O(1) \quad\text{when $o\neq x\to o$}
\tag{\ref{VKv}o}\label{gDVov},
\\
V&\equiv 0 \quad\text{on $\RR^d_{\infty}\setminus D$.}
\tag{\ref{VKv}$_0$}\label{gDV=0}
\end{align}
\end{subequations}
Besides, for any  $v\in  \sbh_{+0}^{\uparrow}(D\setminus S_o; \circ r,b_-< b_+)$ we get  a function $V\colon  \RR_{\infty}^d\setminus \{o\} \to \RR_{-\infty}$ 
as the limit of the increasing sequence of functions satisfying the conditions \eqref{gDVhv}--\eqref{gDVov} with the same properties  \eqref{gDVhv}--\eqref{gDVov}, but with a weaker property instead of \eqref{gDV=0}, more precisely 
\begin{equation}\label{vuparr}
V\equiv 0 \quad\text{on $\RR^d_{\infty}\setminus \clos D$, \quad   $V\geq 0$ on $\partial D$,}
\tag{\ref{VKv}$'_0$}
\end{equation}
and such function $V$ is not necessarily upper semi-continuous on 
$\clos D\setminus S_o$.

This also holds true for any  functions 
$v\in \sbh_0^+(D\setminus S_o; \leq b_+)$ or  $v\in \sbh_0^{+\uparrow}(D\setminus S_o; \leq b_+)$, resp., together with additional constraint of positivity $V\geq 0$ on $\RR_{\infty}^d\setminus \{o\}$.
\end{gluingtheorem}
\begin{proof}
By Proposition \ref{pr:0} we can consider the function $v\in \in \sbh_{+0} (D\setminus S_o; \circ r,b_-< b_+)$ as defined on $\RR^d_{\infty}\setminus S_o$ by \eqref{pr:0v}, i.\,e., $v\equiv 0$ on $\RR^d_{\infty}\setminus D$, and $v\in \sbh(\RR^d_{\infty} \setminus S_o)$. By Gluing Theorem \ref{gl:th_es}  with open set $\mathcal O:=\RR^d\setminus \{o\}$, with constants $M_v\overset{\eqref{avvM}}{:=}b^+$,
$m_v\overset{\eqref{avvm}}{:=}b_-$, $M_g\overset{\eqref{gDVg}}{:=}\const^+_{o,S_o,r}>0$,
and a constant  $B$ from \eqref{gDVgv},
we  construct  a function $V\in \sbh_0(\RR^d_{\infty} \setminus \{o\})$, $V(\infty):=0$,	
with properties \eqref{VK} that go into properties  \eqref{gDVhv}--\eqref{gDVov} together with identity \eqref{gDV=0}. 
Note that we use the principle of domination in \eqref{gDVleqv}--\eqref{gDVleqv+}
for Green's functions to replace $D_r$ with $D$ 
since a domain $D_r$ from \eqref{gDVleq} 	is a subdomain of $D$ for \eqref{bbpmr}.
\end{proof}

\begin{proposition}\label{pr:13} Let  $b_{\pm}, r$ are constants satisfying \eqref{bbpmr},
$v\in \sbh_{+0}^{\uparrow}(D\setminus S_o; \circ r,b_-< b_+)$ (respectively, $v\in \sbh_0^{+\uparrow}(D\setminus S_o; \leq b_+)$).  Then there are a constant 
\begin{equation}\label{BV}
B\overset{\eqref{gDVgv}}{=}\const_{o,S_o,r,b_{\pm}}^+,
\end{equation}
and  an increasing sequence of Arens\,--\,Singer (resp., Jensen) potentials $V_n\in PAS_o(D)$ (resp., $V_n\in PJ_o(D)$), $n\in n_0+\NN_0$, such that 
\begin{subequations}\label{VPAS}
\begin{align}
0<&V_n\in \har\bigl(\Int S_o\setminus \{o\}\bigr),
\tag{\ref{VPAS}h}\label{{VPAS}h}
\\
BV_n&\underset{n_0\leq n\to \infty}{\nearrow}V \quad\text{on $D\setminus \{o\}$},
\tag{\ref{VPAS}$\uparrow$}\label{{VPAS}up}
\\
\intertext{where $V\colon  \RR_{\infty}^d\setminus \{o\} \to \RR_{-\infty}$  is a function with properties \eqref{gDVhv}--\eqref{gDVov}, \eqref{vuparr},}
\lim_{o\neq x\to o}&\frac{V_n(x)}{-K_{d-2}(x,o)}=1,
\tag{\ref{VPAS}o}\label{{VPAS}upo}
\\
BV_n&\leq  b_++Bg_D(\cdot ,o)\quad\text{on $S_o^{\cup (3r)}\setminus \{o\}$}, \quad n\in n_0+\NN_0.
\tag{\ref{VPAS}b}\label{{VPAS}b}
\end{align}
\end{subequations}
\end{proposition}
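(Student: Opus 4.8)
The plan is to reduce the statement to Gluing Theorem~\ref{glth5} applied to a generating sequence, and then to repair the single property that the glued functions lack, namely \emph{compact} support inside $D$, by an inner exhaustion. First I would unwind the completion \eqref{Fuparrow}: choose an increasing sequence $v_j\nearrow v$ with $v_j\in \sbh_{+0}(D\setminus S_o;\circ r,b_-<b_+)$ (resp. $v_j\in \sbh_0^+(D\setminus S_o;\leq b_+)$). Applying Gluing Theorem~\ref{glth5} to each $v_j$ with the \emph{same} constants $b_\pm,r$ produces $W_j\in\sbh_*(\RR_\infty^d\setminus\{o\})$ satisfying \eqref{gDVhv}--\eqref{gDVov} together with the strong identity \eqref{gDV=0}, and --- this is the crucial point --- the constant $B$ of \eqref{gDVgv} and the glued inner piece $v_0=B\,g_D(\cdot,o)-\const$ are the \emph{same} for every $j$, since they depend only on $o,S_o,r,b_\pm$. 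Because $v_0$ is fixed and the Perron--Wiener--Brelot modification used in the proof of Theorem~\ref{gl:th_es} is monotone in its boundary data, the sequence $(W_j)$ is increasing, and its limit is precisely the function $V$ of \eqref{{VPAS}up}, with properties \eqref{gDVhv}--\eqref{gDVov} and \eqref{vuparr}. Moreover all $W_j$ coincide on $S_o\setminus\{o\}$ with the single harmonic function $v_0$.

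Each $W_j/B$ already satisfies every defining condition \eqref{ASpc+} of a potential in $PAS_o^1(D)$ (resp. is in addition positive, hence in $PJ_o^1(D)$) \emph{except} that, being equal to $v_j\geq 0$ near $\partial D$, it need not vanish off a compact subset of $D$. To fix this I would fix a regular exhaustion $D_{n_0}\Subset D_{n_0+1}\Subset\cdots\nearrow D$ with $S_o^{\cup(3r)}\Subset D_{n_0}$ and, for each $j$, let $P_{j,n}$ be the \emph{r\'eduite} of $W_j$ below the obstacle forcing ${}\leq 0$ on $\RR_\infty^d\setminus\clos D_n$, i.e. the upper-semicontinuous regularization of the largest subharmonic minorant of $W_j$ that is $\leq 0$ there (in the Jensen case replace $P_{j,n}$ by $\max\{P_{j,n},0\}$ to retain positivity). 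Standard r\'eduite/balayage facts then give: $P_{j,n}\equiv 0$ off the compact set $\clos D_n\Subset D$; $P_{j,n}\leq W_j$ and, as the obstacle retreats, $P_{j,n}\nearrow W_j$ as $n\to\infty$; and, since the coincidence set lies near $\partial D_n$, $P_{j,n}$ is harmonic near $o$ and retains the full pole, in fact $P_{j,n}=v_0$ on $S_o\setminus\{o\}$ for $n\geq n_0$. Hence $V_{j,n}:=P_{j,n}/B\in PAS_o(D)$ (resp. $PJ_o(D)$), with $0<V_{j,n}\in\har(\Int S_o\setminus\{o\})$ and $\lim_{x\to o}V_{j,n}(x)/(-K_{d-2}(x,o))=1$, while $BV_{j,n}\leq W_j\leq b_++B\,g_D(\cdot,o)$ on $S_o^{\cup(3r)}\setminus\{o\}$ by \eqref{gDVleqv}--\eqref{gDVleqv+}.

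Finally I would diagonalize. The class $PAS_o(D)$ (resp. $PJ_o(D)$) is closed under $\max$, because the conditions \eqref{ASpc+} (and positivity) are preserved by the maximum of two subharmonic functions; and since every $P_{j,n}$ equals the \emph{same} $v_0$ near $o$, the maximum of finitely many of them is still harmonic there. Applying Proposition~\ref{pr:up} to the doubly increasing family $\{P_{j,n}\}$ (increasing in $n$ to $W_j$, and $W_j$ increasing in $j$ to $V$), I set $V_n:=\tfrac1B\max\{P_{j,k}:j\leq n,\ k\leq n\}$. Then $(V_n)$ is increasing, $V_n\in PAS_o(D)$ (resp. $PJ_o(D)$) with $0<V_n\in\har(\Int S_o\setminus\{o\})$ and $\lim_{x\to o}V_n(x)/(-K_{d-2}(x,o))=1$, one has $BV_n\nearrow V$ on $D\setminus\{o\}$, and $BV_n\leq b_++B\,g_D(\cdot,o)$ on $S_o^{\cup(3r)}\setminus\{o\}$; these are exactly \eqref{{VPAS}h}--\eqref{{VPAS}b}, with $B=\const_{o,S_o,r,b_\pm}^+$ as in \eqref{BV}.

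The main obstacle is the tension between \emph{compact support} and a \emph{uniform} constant $B$. It is tempting to replace each $v_j$ first by a compactly supported minorant in $\sbh_{00}$ and only then glue; but lowering $v_j$ near $\partial D$ degrades its lower bound $b_-$ on the annulus $S_o^{\cup(2r)}\setminus S_o^{\cup r}$, which through \eqref{gDVgv} would inflate $B$ and destroy its independence of $n$. Performing the r\'eduite \emph{after} gluing --- on $W_j$, where $v_0$ and the pole are already frozen with the correct $B$ --- is what keeps $B$ uniform. The delicate technical points to check carefully are then the monotone convergence $P_{j,n}\nearrow W_j$ and the preservation of both harmonicity near $o$ and the exact pole coefficient under the r\'eduite, so that the $V_n$ genuinely lie in $PAS_o^1(D)$ (resp. $PJ_o^1(D)$) and satisfy \eqref{{VPAS}upo}.
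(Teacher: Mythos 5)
Your first step is sound and agrees with the paper: unwinding the completion, applying Gluing Theorem \ref{glth5} to each $v_j$ with the same data $o,S_o,r,b_\pm$, so that the constant $B$ and the inner piece are uniform in $j$, and noting that the Perron--Wiener--Brelot modification is monotone. The proof breaks at the r\'eduite step, and the break is not a technicality. The ``standard r\'eduite/balayage facts'' you invoke are facts about \emph{positive} superharmonic functions; your obstacle is sign-changing, and the key claim $P_{j,n}\equiv 0$ off $\clos D_n$ is false in general. A test function in $\sbh_{+0}(D\setminus S_o;\circ r,b_-<b_+)$ is constrained only near $S_o$ (by $b_\pm$) and near $\partial D$ (positivity with limit $0$); in between it may be negative, even $-\infty$ at points: for instance $v:=c\,g_D(\cdot,o)-a\,g_D(\cdot,x_2)$ restricted to $D\setminus S_o$, with a pole $x_2$ placed arbitrarily close to $\partial D$ and $c/a$ chosen to keep $v\geq 0$ near $\partial D$, lies in the class for suitable $b_\pm$. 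Since your exhaustion $(D_n)$ is fixed in advance, nothing prevents $W_j=v_j$ from satisfying $W_j(x_2)<0$ at some $x_2\notin\clos D_n$. Then \emph{every} admissible minorant obeys $u\leq W_j(x_2)<0$, so $P_{j,n}(x_2)<0$; and since $P_{j,n}\leq 0$ on the connected component of $\RR_{\infty}^d\setminus\clos D_n$ containing $x_2$ (which meets $\RR_{\infty}^d\setminus D$ when $x_2$ is near $\partial D$), the strong maximum principle leaves only two options: $P_{j,n}\equiv 0$ on that component (impossible) or $P_{j,n}<0$ throughout it. In the latter case $P_{j,n}$ vanishes outside \emph{no} compact subset of $D$, hence violates \eqref{ASpc+} and is not an Arens--Singer potential at all; your diagonal maxima are then not formed from members of $PAS_o(D)$, and \eqref{{VPAS}h} fails as well, since a maximum of functions that are harmonic but not identical on $\Int S_o\setminus\{o\}$ is in general only subharmonic there. (Your positive-part trick does repair the support in the Jensen case, but not in the Arens--Singer case, which is the one Criterium \ref{crit2} needs.)

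The two remaining ``standard facts'' are exactly where the real work lies, and they are not supplied: the convergence $P_{j,n}\nearrow W_j$ requires exhibiting admissible minorants with compact support in $D$ that approximate $W_j$ from below --- i.e.\ precisely the construction your argument is missing --- and the preservation of harmonicity and of the exact pole coefficient under the constrained envelope is an obstacle-problem regularity statement that is not classical for sign-changing obstacles. The paper does this work explicitly and avoids your double sequence altogether: it first reduces to a single $v$ in the base class via Proposition \ref{pr:up}, using that $PAS_o(D)$ and $PJ_o(D)$ are closed under maxima; then it cuts the glued function $V$ along its \emph{own} sublevel sets rather than along a prefixed geometric exhaustion, setting $v_n:=V-1/n$ everywhere except $v_n:=0$ on the components of $\{V<1/n\}$ that meet $\RR_{\infty}^d\setminus D$. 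On those components $0=\max\{V-1/n,0\}$, so subharmonicity follows from Gluing Theorem \ref{gl:th2}; the support is compact because these components form an open neighborhood of $\RR_{\infty}^d\setminus D$ inside the compact space $\RR_{\infty}^d$ (this is where positivity of $V$ near $\partial D$, i.e.\ \eqref{gDV=0} coming from membership in $\sbh_{+0}$, is used); and the pole coefficient $B$ is untouched, so $V_n:=v_n/B\in PAS_o(D)$ (with $v_n^+$ in the Jensen case). If you want to keep your scheme, the r\'eduite must be replaced by this level-set cut; the cut at the level $1/n$ of the function itself is what lets compact support, subharmonicity, and the exact pole coefficient be verified simultaneously.
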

\begin{proof} The classes $PAS_o(D)$ and $PJ_o(D)$ are closed relative to the maximum. By Proposition \ref{pr:up}, it suffices to prove Proposition \ref{pr:13} only for  functions  $v\in \sbh_{+0}(D\setminus S_o; \circ r,b_-< b_+)$ (resp., $v\in \sbh_0^{+}(D\setminus S_o; \leq b_+)$). For a function $v\in \sbh_{+0}(D\setminus S_o; \circ r,b_-< b_+)$, we  consider a function $V$ from Gluing Theorem 
\ref{glth5}. For each number $n\in \NN$ we put in correspondence an open set  
$ O_n:=\bigl\{x\in \RR_{\infty}^d\setminus \{o\}\colon V(x)<1/n\bigr\}\supset O_{n+1}$,
and  a function $v_n$ such that 
\begin{enumerate}[{i)}]
\item\label{iv} this function $v_n$ vanishes  on all connected components $\conn_{\RR_{\infty}^d}(O_n,x_j)\in \Conn_{\RR_{\infty}^d} O_n$ that met with complement  $\RR_{\infty}^d\setminus D$ of $D$, i.\,e.,  $v_n\equiv 0$ on every  connected  component $\conn_{\RR_{\infty}^d}(O_n,x_j)$ admitting a  representer $x_j\in \RR_{\infty}^d\setminus D$,
\item\label{iiv}  $v_n:=V-1/n$ on the rest of $\RR_{\infty}^d\setminus \{o\}$.
\end{enumerate}
By the construction \ref{iv})--\ref{iiv}), this functions $v_n$  are subharmonic on $\RR^d_{\infty}\setminus \{0\}$,
have a compact support in $D$, i.\,e.,  $v_n \overset{\eqref{{s0}00}}{\in}\sbh_{00} (D\setminus \{o\})$, and form an increasing sequence $v_n\underset{n\to \infty}{\nearrow} V$ on $\RR^d_{\infty}\setminus \{o\}$. In view of \eqref{gDVov}, there exists the limit
\begin{equation*}
\lim_{o\neq x\to o}\frac{v_n(x)}{-K_{d-2}(x,o)}=B.
\end{equation*}
Thus, if we set $V_n:=\frac{1}{B}\,v_n$, 
then $V_n\in PAS_o(D)$ (see definition \eqref{ASpc+} from Subsec. \ref{ASP}) and  properties \eqref{VPAS} are fulfilled. 

In the case $v\in \sbh_0^{+}(D\setminus S_o; \leq b_+)$, we consider functions $v_n^+$ after \ref{iv})--\ref{iiv}) instead of $v_n$ and obtain 
$V_n\in PJ_o(D)$ (see definition  from Subsec. \ref{JP}) with  properties \eqref{VPAS}. 
\end{proof}

\begin{remark}\label{rem:5} 
Numerous methods and examples of constructing various classes of test subharmonic positive functions are described in articles \cite{KhaAbdRoz18}, \cite{KhaTam17L}.  Test subharmonic  alternating-sign functions can be obtained from them in the development of Example \ref{5} and in the consideration of potentials for  measure from such examples.
\end{remark}

\section{Criteria for subharmonic and harmonic functions}\label{Crit}
\setcounter{equation}{0}

\begin{Criterium}[{\rm for subharmonic functions}]\label{crit1}
Let  $D\neq \varnothing$ be a domain in $\mathbb \RR^d$ with non-polar boundary $\partial D$, $M\in \sbh (D)\cap C(D)$ be a function with the Riesz measure $\mu_M \in \Meas^+(D)$, and $u\in \sbh_*(D)$ be a function  with the Riesz measure $\upsilon_u \in  \Meas^+(D)$. Then  the following three  statements are equivalent:

\begin{enumerate}[{\rm [{\bf s}1]}]
\item\label{{s}1} There is a subharmonic  function $h\in \text{\rm sbh}_* (D)$ such that
\begin{equation}\label{uhM}
u+h\leq M \quad\text{on $D$}.
\end{equation} 
\item\label{{s}2} For any $S_o$  satisfying 
\begin{equation}\label{S0seto}
\varnothing \neq \Int S_o\subset S_o\Subset D\subset \RR^d
 \end{equation}
and for any constant $b_+\in \mathbb R^+_*$,  the measure  $\mu_M $ is an affine balayage of the measure $\upsilon_u$ for class $\sbh_0^{+\uparrow}(D\setminus S_o; \leq b_+)$   outside  $S_o$  {\rm (see  \eqref{{tf}b} and \eqref{vup})}. 
\item\label{{s}3} There are  $S_o$  as in \eqref{S0seto} and $b_+\in \mathbb R^+_*$ such that   $\mu_M $ is an affine balayage of 
$\upsilon_u$  for  the   class $\sbh_{00}^+(D\setminus S_o;\leq b_+)\bigcap  C^{\infty}(D\setminus S_o)$ outside $S_o$ {\rm (see \eqref{{s0}00} and \eqref{{tf}b+}).}
\end{enumerate}
\end{Criterium}
\begin{remark}\label{rem:6} A special case of Criterium \ref{crit1} were considered in \cite[Theorem 1]{KhaKha19} for one complex variable.  Various forms of implication  $[{\bf s}\ref{{s}1} ]\Rightarrow[{\bf s}\ref{{s}2}]$ for narrower classes of test subharmonic functions were obtained in the works \cite{KhaRoz18}, \cite{KhaTam17A}, \cite{KhaTam17L}.
\end{remark}

``Subharmonic'' Criterium \ref{crit1}  has a similar ``harmonic'' counterpart. 

\begin{Criterium}[{\rm for harmonic functions}]\label{crit2}
 Let the conditions of Theorem\/ {\rm 1} are fulfilled.  Then  the following three 
 statements are equivalent:

\begin{enumerate}[{\rm [{\bf h}1]}]
\item\label{{h}1} There exists a  harmonic  function $h\in \text{\rm har} (D)$ such that
 $u+h\leq M$ on $D$ as in \eqref{uhM}.

\item\label{{h}2+} For any  connected set $S_o$  from \eqref{S0seto} and  for any constants $r, b_{\pm}$ from\/ \eqref{bbpmr}, i.\,e., 
\begin{equation}\label{bbpmr+}
0<3r<\dist(S_o, \partial D), \quad  -\infty <b_-< 0 < b_+<+\infty,  
\end{equation}
there is a constant\/ $C\in\RR$ such that
{\rm (see \eqref{{tf}bc} and \eqref{vup})}
\begin{equation}\label{almB}
\int_{D\setminus S_o}v \dd \upsilon_u\leq 
\int_{D\setminus S_o^{\cup(3r)}}v \dd \mu_M+C \quad
\text{for all\quad $v\in \sbh_{+0}^{\uparrow}(D\setminus S_o;\circ r, b_-< b_+)$}.
\end{equation}
\item\label{{h}2} For any  connected set $S_o$  from \eqref{S0seto} and  for any constants
from\/ \eqref{bbpmr+},   $\mu_M $ is an affine balayage of  $\upsilon_u$  for  the  class $\sbh_{+0}^{\uparrow}(D\setminus S_o;  r, b_-< b_+)$  outside  $S_o$ {\rm (see \eqref{{tf}bpm0}, \eqref{vup})}.

\item\label{{h}3} There are  connected set $S_o$ as in \eqref{S0seto} and constants as in\/  \eqref{bbpmr+}  such that   $\mu_M $ is an affine balayage of 
 $\upsilon_u$  for $\sbh_{00}(D\setminus S_o; r,b_-< b_+)\bigcap  C^{\infty}(D\setminus S_o)$ outside $S_o$  {\rm (see \eqref{{tf}bpm})}.
\end{enumerate}
\end{Criterium}

\begin{remark}\label{rem:7} A very special case of Criterium  \ref{crit2} announced without proof in \cite[Theorem 2]{MenKha19} for functions of one complex variable and subharmonic functions  $u$ of the form $u=\ln |f|$, where $f$ is a non-zero holomorphic function on a  domain $D$ in the complex plane $\CC$. 
\end{remark}

\begin{remark}\label{rem:10} By the  inclusions \eqref{incv} of Proposition \ref{pr:12}, 
the implications {\bf s}\ref{{s}2}$\Rightarrow${\bf s}\ref{{s}3} from Criterium \ref{crit1}
and {\bf h}\ref{{h}2}$\Rightarrow${\bf h}\ref{{h}3} from Criterium \ref{crit2} are obvious.
\end{remark}

For $v\in L^1\bigl(B(x,r)\bigr)$, we define the averaging value of $v$ at the point $x$ on the ball 
$B(x,r)$ as (cf. with \eqref{v0})
\begin{equation}\label{v0B}
v^{\bullet r}(x)\overset{\eqref{df:spb}}{:=}\frac{1}{b_d}\int_{\BB} v (x+rs) \dd \lambda_d(s).
\end{equation}

\section{Proofs of implications {\bf s}\ref{{s}1}$\Rightarrow${\bf s}\ref{{s}2}
and {\bf h}\ref{{h}1}$\Rightarrow${\bf h}\ref{{h}2+}$\Rightarrow${\bf h}\ref{{h}2} from Criteria \ref{crit1} and \ref{crit2}}\label{proofs12}
\setcounter{equation}{0}

In view of \eqref{S0set},  there are  a point $o\in \Int S_o$ and a number  $r_o \in \RR_*^+$ such that 
\begin{equation}\label{oB}
o\in B(o,r_o)\overset{\eqref{{B}B}}{\subset} \overline{B}(o,r_o)\Subset \Int S_o\subset S_o\Subset D\subset \RR^d, \quad \text{and $u(o)\in \RR$, $M(o)\in \RR$}.
\end{equation}
Let $\mu \in J_o(D)$ be a Jensen measure for the point $o$ in the case {\bf s}\ref{{s}1} 
with the restriction
\begin{equation}\label{Bcap}
\overline B(o,r_o) \cap \supp \mu =\varnothing. 
\end{equation}
or  $\mu\in AS_o(D)$ be a Arens\,--\,Singer measure for the point $o$ in the case {\bf h}\ref{{h}1},  respectively.
It is follows from \eqref{uhM} in the cases  {\bf s}\ref{{s}1} or {\bf h}\ref{{h}1} that 
\begin{equation}\label{uhleqM}
\int_D u \dd \mu +\int_D h \dd \mu \leq \int_D M \dd \mu,  
\end{equation} 
where 
\begin{equation}\label{hh1}
\RR\ni h^{\bullet r_o}(o)\overset{\eqref{v0B}}{=}h(o)=\int_D h \dd \mu  \quad \text{in the case {\bf h}\ref{{h}1} with 
$h\in \har(D)$ and $\mu \in AS_o(D)$}.
\end{equation}
In the case {\bf s}\ref{{s}1}, we see by Proposition   \ref{pr:vstm} with 
$$
D:=O,  \quad \varsigma:=\delta_o, \quad \vartheta =\frac{1}{b_dr_o^d}\lambda_d\bigm|_{B(o,r_o)}\in \Meas_{\comp}^{1+}(D),
$$
  in view of \eqref{Bcap}, that   
\begin{equation}\label{muball}
\lambda_d^{r_o}:=\frac{1}{b_dr_o^d}\lambda_d\bigm|_{B(o,r_o)}\preceq_{\sbh(D)} \mu ,
\end{equation}
whence   
\begin{equation}\label{hbo}
\RR \ni h^{\bullet r_o} (o)\overset{\eqref{v0B}}{=} \int h\dd \lambda_d^{r_o}
\overset{\eqref{muball}}{\leq} \int h \dd \mu \text{ for
 $h\in \sbh_*(D)$, $\mu \in J_o(D)$ with \eqref{Bcap}}. 
\end{equation}
Thus, from \eqref{uhleqM} we obtain  by \eqref{hbo} or \eqref{hh1}   that,
both in the case of {\bf s}\ref{{s}1} and in the case of {\bf h}\ref{{h}1}, for a constant
$C_o:=-h^{\bullet r_o} (o)=\const_{h,o,r_o}\in \RR$, the following inequality is fulfilled:  
\begin{equation}\label{esthbull}
\int_D u \dd \mu \leq \int_D M \dd \mu +C_o \quad \text{for all }\mu \in
\left[
  \begin{array}{ccc}
     J_o(D)  & \text{in the case {\bf s}\ref{{s}1}}& \text{with \eqref{Bcap}}, \\
     AS_o(D) & \text{in the case {\bf h}\ref{{h}1}}.& {}\\
  \end{array}
\right.
\end{equation}
By  generalized Poisson\,--\,Jensen formula for $\har(O)$-balayage from Theorem  \ref{PJf}
in the form \eqref{tfin}--\eqref{fPJ+} for the functions $u,M\in \sbh_*(D)$ with $\vartheta :=\delta_o$, we have 
\begin{subequations}\label{PJuM}
\begin{align}
\int_D u \dd \mu &=u(o)+\int_D {\pt}_{\mu-\delta_o} \dd \upsilon_u, \quad u(o)\overset{\eqref{oB}}{\in} \RR,  
\tag{\ref{PJuM}u}\label{{PJuM}u}
\\
 \int_D M \dd \mu&=M(o)+\int_D {\pt}_{\mu-\delta_o} \dd \mu_M, \quad M(o)\in \RR.
\tag{\ref{PJuM}M}\label{{PJuM}M}
\end{align}
\end{subequations}
By \eqref{esthbull}--\eqref{PJuM}, we obtain
\begin{equation}\label{estV}
\begin{split}
\int_D {\pt}_{\mu-\delta_o} \dd \upsilon_u&\leq 
\int_D {\pt}_{\mu-\delta_o} \dd \mu_M+C_1,\\
\text{where } C_1&:=C_o+M(o)-u(o)=\const_{h,o,r_o,S_o,u,M},
\end{split}
\end{equation} 
for all $\mu\in J_o(D)$ with \eqref{Bcap} in the case {\bf s}\ref{{s}1} or for all $\mu\in AS_o(D)$  in the case {\bf h}\ref{{h}1} respectively.

In the case {\bf h}\ref{{h}1}, by Duality Theorem A with $x:=o$, if measures $\mu$ cover all class $AS_o(D)$, then the potentials ${\pt}_{\mu-\delta_o}$ run through the entire class $PAS_o(D)$ of Arens\,--\,Singer potentials. In the case {\bf s}\ref{{s}1}, by Duality Theorem B with $x:=o, O:=D,  Q:=B(o,r_o)$ in the version \eqref{interJi}--\eqref{PVJ}, if measures $\mu$ cover all class $J_o(D)$ with \eqref{Bcap}, then the potentials ${\pt}_{\mu-\delta_o}$ run through the entire class 
(see \eqref{PJ1})
\begin{equation}\label{PVJC}
PJ_o^1(D)\bigcap \har \bigl(B(o,r_o)\setminus \{o\}\bigr)\subset PJ_o(D)
\end{equation}
of Jensen potentials.  Thus, it is follows from \eqref{estV}  that  
\begin{equation}\label{estVV}
\int_D V \dd \upsilon_u\leq 
\int_D V \dd \mu_M+C_1, \quad \text{where the constant $C_1\in \RR$ independent of $V$},
\end{equation}
for all Arens\,--\,Singer potentials $V\in PAS_o(D)$  in the case {\bf h}\ref{{h}1}, and 
for all Jensen potentials  $V$ from the class \eqref{PVJC} in the case {\bf s}\ref{{s}1}.

Let $v\in \sbh_0^{+\uparrow}(D\setminus S_o; \leq b_+)$ in the case {\bf s}\ref{{s}1}
or $v\in \sbh_{+0}^{\uparrow}(D\setminus S_o; \circ r, b_-< b_+)$ in the case {\bf h}\ref{{h}1}, respectively. By Proposition \ref{pr:13},  there are  constants
from \eqref{BV} and  an increasing sequence of  Jensen (resp., Arens\,--\,Singer) potentials $V_n\in PJ_o(D)$ (resp., $V_n\in PAS_o(D)$), $n\in \NN$, satisfying \eqref{VPAS}.
For such potentials,  relation \eqref{estVV} entails the relations
\begin{equation}\label{estVVn}
\int_D BV_n \dd \upsilon_u\leq 
\int_D BV_n \dd \mu_M+BC_1 \leq 
\int_D V \dd \mu_M+BC_1, \quad V:=\lim_{n\to \infty} BV_n \quad\text{on $D\setminus \{o\}$},
\end{equation}
where the function\footnote{This function $V$ has nothing to do with potentials $V$ 
from \eqref{estVV}.} $V$  has all the properties  \eqref{gDVhv}--\eqref{gDVov}, \eqref{vuparr}, and  the  constant $BC_1\in \RR$ independent of $V_n$, $n\in \NN$.
We will present the integral  on the right-hand side  of inequalities \eqref{estVVn} in the form of sum of the integrals:
\begin{multline}\label{VmuM}
\int_D V \dd \mu_M=\left(\int_{D\setminus S_o^{\cup(3r)}} +
\int_{S_o^{\cup (3r)}\setminus S_o}+
\int_{S_o\setminus \{o\}} \right)V \dd \mu_M\\
\overset{\eqref{gDV=v}\text{-}\eqref{gDVleqv+}}{\leq} \int_{D\setminus S_o^{\cup(3r)}}v \dd \mu_M +
b_+\mu_M(S_o^{\cup (3r)}\setminus S_o)+
B\int_{S_o^{\cup (3r)}\setminus \{o\}} g_D(x,o)\dd \mu_M\\
\leq \int_{D\setminus S_o^{\cup(3r)}}v \dd \mu_M
+C_2,
\quad \text{where $C_2=\const_{o,S_o,r,b_{\pm},B,u,M}^+$}
\end{multline}
is a constant independent of $v$. In addition, in the case 
$v\in \sbh_0^{+\uparrow}(D\setminus S_o; \leq b_+)$, the function  $v$ is positive on $D\setminus S_0$, and we have 
\begin{equation}\label{+M}
\int_D V \dd \mu_M\overset{\eqref{VmuM}}{\leq} \int_{D\setminus S_o}v \dd \mu_M
+C_2\quad\text{in the case {\bf s}\ref{{s}1}}.
\end{equation}
Besides, in the case $v\in \sbh_{+0}^{\uparrow}(D\setminus S_o;  r, b_-< b_+)\subset \sbh_{+0}^{\uparrow}(D\setminus S_o;  \circ r, b_-< b_+)$,  the function $v$ is bounded from  below by $b_-\in \RR\setminus \RR^+_*$, and it is follows from \eqref{VmuM} that 
\begin{equation}\label{+Mb}
\begin{split}
\int_D V \dd \mu_M\overset{\eqref{VmuM}}{\leq} \int_{D\setminus S_o}v \dd \mu_M
+C_2-\mu_M(S_o^{\cup (3r)}\setminus S_o)b_-
=\int_{D\setminus S_o}v \dd \mu_M
+C_3,\\ \text{where $C_3=\const_{o,S_o,r,b_{\pm},B,u,M}^+$
in the case $v\in \sbh_{+0}^{\uparrow}(D\setminus S_o;  r, b_-< b_+)$}.
\end{split}
\end{equation} 

If the integrals in the right parts of \eqref{VmuM}, \eqref{+M}, \eqref{+Mb} are equal to $+\infty$, then there is nothing to prove. Otherwise, by the 
Beppo Levi's monotone convergence theorem for Lebesgue integral, \eqref{estVVn} together with \eqref{VmuM}--\eqref{+Mb} implies 
\begin{equation}\label{Levlim}
\int_D V \dd \upsilon_u\leq 
\int_{D\setminus S_o^*} v \dd \mu_M+C,\quad\text{where $C=\const_{o,S_o,r,b_{\pm},B,u,M}^+$}, 
\end{equation}
and $S_o^*:=S_o^{\cup (3r)}$ in the proof of statement {\bf h}\ref{{h}2+}, but $S_o^*:=S_o$ in the derivation of statement {\bf h}\ref{{h}2} from {\bf h}\ref{{h}2+}, as well as in the proof of statement {\bf s}\ref{{s}2}. According to 
\eqref{gDVhv}--\eqref{gDVleqv}, we obtain from  \eqref{Levlim}  that
\begin{multline*}
\int_{D\setminus S_o}v\dd \upsilon_u
\overset{\eqref{gDVhv}}{\leq}
\int_{S_0} V\dd  \upsilon_u
+\int_{D\setminus S_0} v\dd  \upsilon_u\\
\overset{\eqref{gDVleqv}}{\leq}
\int_{S_0} V\dd  \upsilon_u
+\int_{S_o^{\cup (3r)}\setminus S_0} V\dd  \upsilon_u
+\int_{D\setminus S_o^{\cup (3r)}} v\dd  \upsilon_u
\overset{\eqref{gDV=v}}{=}
\int_{D} V\dd  \upsilon_u
\overset{\eqref{Levlim}}{\leq}
\int_{D\setminus S_o^*} v \dd \mu_M+C,
\end{multline*}
where the constant $C$ is independent of $v$, and a set $S_o^*$ is defined immediately after \eqref{Levlim}.
\begin{remark}\label{rembD} 
We do not require any properties for the boundary $\partial D$ in the proofs of implications {\bf s}\ref{{s}1}$\Rightarrow${\bf s}\ref{{s}2}$\Rightarrow${\bf s}\ref{{s}3} and 
{\bf h}\ref{{h}1}$\Rightarrow${\bf h}\ref{{h}2+}$\Rightarrow${\bf h}\ref{{h}2}$\Rightarrow${\bf h}\ref{{h}3}. Besides, we do not use also the continuity of the majorizing function $M\in \sbh_*(D)$ from \eqref{uhM}.
Therefore these implications are true for arbitrary domain $D\subset \RR^d$ and for arbitrary majorizing function $M\in \sbh_*(D)$ in \eqref{uhM}.
\end{remark}

\section{Proofs of implications\/ {\rm {\bf s}\ref{{s}3}$\Rightarrow${\bf s}\ref{{s}1}}
and\/ {\bf h}{\rm \ref{{h}3}}$\Rightarrow${\bf h}{\rm\ref{{h}1}}
 from Criteria \ref{crit1} and \ref{crit2}}\label{proofs31}
\setcounter{equation}{0}

The basis of the proof of implications\/ {\rm {\bf s}\ref{{s}3}$\Rightarrow${\bf s}\ref{{s}1}}
and\/ {\bf h}{\rm \ref{{h}3}}$\Rightarrow${\bf h}{\rm\ref{{h}1}} is the following 
\begin{theoC}[{\rm \cite[Ch.~2, {\bf 8.2}, Corollary 8.1, II, 1, (i)-(ii)]{KhaRozKha19}}] Let $D\subset \RR^d$ be a domain, $H$ be a convex cone in $\sbh_*(D)$  containing constants, and
\begin{equation}\label{Minfty}
\mathcal M^{\infty}(D\setminus U_0)\overset{\eqref{MLl}}{:=}\Meas^+_{\comp}(D\setminus U_0)\bigcap \bigl(C^{\infty}(D)\dd \lambda_d\bigr).
\end{equation} 
Suppose that one of the following two conditions is fulfilled:
\begin{enumerate}[{\rm (a)}]
\item\label{sla} 
for any locally bounded from above sequence of functions  $( h_k)_{k\in \NN} \subset H$, the upper semi-continuous regularization of the upper limit 
\begin{equation*}
\limsup_{k\to\infty} h_k \quad\text{belong to $H$ provided that}\quad  \limsup_{k\to\infty} h_k(x)\not\equiv -\infty
\text{ on  $D$;}
\end{equation*}
\item\label{slb} 
 $H$ is sequentially closed in $L^1_{\loc}(D)$.
\end{enumerate}
 Let $u\in \sbh_*(D)$, $M\in C(D)$, and 
$0\neq \vartheta \in \Meas^+_{\comp}(D)$, $\supp \vartheta \subset U_0\Subset D$, where  $U_0$ is the domain.  If there is a constant $C\in \RR$ such that
\begin{equation}\label{affbal}
\int_D u\dd \mu \leq \int_D M\dd \mu+C
\quad\text{for all measures \; $\mu\overset{\eqref{Minfty}}{\in} \mathcal M^{\infty}(D\setminus U_0)$
such that $\vartheta\preceq_H \mu$},        
\end{equation} 
then there is a function $h\in H$ such that 
$u+h\leq M$ on $D$.
\end{theoC}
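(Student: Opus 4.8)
The plan is to read \eqref{affbal} as the dual form of an Edwards--Mokobodzki majorisation theorem and to recover $h$ as a regularised envelope. Put $g:=M-u$; since $u\in\sbh_*(D)$ is upper semicontinuous and $M\in C(D)$, the function $g$ is lower semicontinuous, and the conclusion $u+h\le M$ of \eqref{uhM} is exactly $h\le g$ with $h\in H$. I would therefore study the $H$-minorant envelope $\underline g(x):=\sup\{v(x)\colon v\in H,\ v\le g\ \text{on}\ D\}$ and its upper semicontinuous regularisation $h:=\underline g^{\,*}$, aiming to show $h\in H$ and $h\le g$ everywhere.

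The bridge to \eqref{affbal} is the one-sided duality for subharmonic cones: for every Jensen-type measure $\mu$ of a point $x$ for $H$ (i.e. $\delta_x\preceq_H\mu$) and every $v\in H$ with $v\le g$ one has $v(x)\le\int v\dd\mu\le\int g\dd\mu$, so $\underline g(x)\le\inf\{\int g\dd\mu\colon \delta_x\preceq_H\mu\}$, and Edwards' theorem gives equality. Integrating the point masses against the reference measure $\vartheta$ and using that an average $\int\mu_x\dd\vartheta(x)$ of such measures is again an $H$-balayage of $\vartheta$ (Proposition \ref{pr:ii}), the hypothesis $\int(u-M)\dd\mu\le C$ turns into a lower bound of the form $\int\underline g\dd\vartheta\ge -C$, so $\underline g\not\equiv-\infty$; the additive constant $C$ is precisely the slack that a constant summand of $h$ (available since $H$ contains the constants) absorbs. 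The global majorant is then produced on an exhaustion $D=\bigcup_nD_n$ with $U_0\Subset D_1$ and $D_n\Subset D_{n+1}$, and passed to the limit.

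The admissibility of the separating and averaged measures for \eqref{affbal} is, I expect, the main obstacle. The hypothesis only controls measures in the class $\mathcal M^{\infty}(D\setminus U_0)$ of \eqref{Minfty}, i.e. balayages of $\vartheta$ with a smooth density supported off $U_0$, whereas the duality produces general balayage measures. To repair this I would convolve with the rotation-invariant mollifiers $\alpha_r^\infty$ of Example \ref{aJ}: by Proposition \ref{pr:ii} and Remark \ref{remgl} the convolution stays an $H$-balayage of $\vartheta$ and lies in $C_0^\infty(D)\dd\lambda_d$, while for small $r$ it perturbs $\int u\dd\mu$ and $\int M\dd\mu$ only by a controlled amount and, since balayage sweeps the mass of $\vartheta$ (supported in $U_0$) outward, keeps the support inside $D\setminus U_0$ once $U_0$ is taken slightly larger than $\supp\vartheta$. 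Preserving the effective inequality through both the mollification and this support adjustment is the delicate bookkeeping at the heart of the argument.

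Finally, the regularity hypotheses (a)/(b) are used only at the end, to turn the envelope into an element of the cone. Standard envelope theory gives $h=\underline g^{\,*}\in\sbh_*(D)$ with $h\le g$ off a polar set; condition (a) — stability of $H$ under regularised upper limits, a Perron-type closedness — or condition (b) — sequential closedness of $H$ in $L^1_{\loc}(D)$ — is exactly what promotes $h$ to a member of $H$ and upgrades $h\le g$ to hold on all of $D$. Assembling the local majorants from the exhaustion into a single $h\in H$ by this compactness/closure then yields $u+h\le M$ on $D$, as required.
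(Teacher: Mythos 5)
The paper contains no proof of Theorem C to compare against: it is imported verbatim from \cite{KhaRozKha19} and then applied in Sec.~\ref{proofs31} to the two cones of \eqref{Hhs}, namely $H=\har(D)$ under condition \eqref{slb} and $H=\sbh_*(D)$ under condition \eqref{sla}. Judged on its own, your proposal has a fatal gap exactly at its terminal step, where you claim that \eqref{sla}/\eqref{slb} ``promote'' the regularized envelope $h:=\underline g^{\,*}$, $\underline g:=\sup\{v\in H\colon v\le g\}$, to an element of $H$. Those conditions are \emph{sequential} closure properties, whereas membership of the envelope in $H$ requires stability of $H$ under regularized suprema of arbitrary, non-directed families --- a far stronger property, plausible for $H=\sbh_*(D)$ (which is stable under finite maxima, so Choquet's lemma plus \eqref{sla} can be brought to bear), but false for $H=\har(D)$, which is precisely the case the paper needs Theorem C for. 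Concretely, take $D=\BB\subset\RR^2\simeq\CC$, $u\equiv 0$, $M(z)=|z|$, so $g=|z|$: every $z\mapsto\operatorname{Re}(e^{i\theta}z)$ is a harmonic minorant of $g$, and the supremum over $\theta$ is already $|z|=g$ itself, which is subharmonic but not harmonic; harmonic minorants of $g$ abound (e.g.\ $h\equiv 0$), yet the envelope is not one of them, and no sequential closedness in $L^1_{\loc}(D)$ can help, since $\underline g^{\,*}$ is simply not harmonic and hence not an $L^1_{\loc}$-limit of harmonic functions. So for $H=\har(D)$ the sought $h$ cannot be \emph{the} envelope; it must be produced by a different mechanism --- separation (order Hahn--Banach), or a sequence of approximate minorants normalized via the constant $C$ and made convergent by $L^1_{\loc}$-compactness, with \eqref{slb} (resp.\ \eqref{sla}) used only to keep the limit of that \emph{sequence} inside $H$. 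That is the content of the cited machinery, and it is the idea missing from your outline.

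Two further problems. First, the ``bridge'' invokes Edwards' theorem for an arbitrary convex cone $H\subset\sbh_*(D)$ on the non-compact domain $D$; no such off-the-shelf duality exists --- Edwards' theorem lives on compact spaces and on specific cones, and establishing dualities of this kind in the present generality is exactly the non-trivial content of \cite{KhaRozKha19} (in this paper, Duality Theorems \ref{DT1} and \ref{DTsbh} treat only $\har$- and $\sbh$-balayage, not abstract $H$), so this step cannot be cited away. Second, your admissibility repair is misjustified: an $H$-balayage of $\vartheta$ need not be supported in $D\setminus U_0$ --- $\vartheta$ itself is an $H$-balayage of $\vartheta$ --- so ``balayage sweeps the mass of $\vartheta$ outward'' is not a reason. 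A correct repair exists: given an $H$-balayage $\mu$ of $\vartheta$, fix an open set $U$ with $\clos U_0\subset U\Subset D$ and replace $\mu$ by $\mu\bigm|_{D\setminus U}$ plus the harmonic-measure sweep of $\mu\bigm|_{U}$ onto $\partial U$; because $H\subset\sbh_*(D)$ this is again an $H$-balayage of $\vartheta$, it is supported in $D\setminus U_0$, it does not decrease $\int u\dd\mu$ and changes $\int M\dd\mu$ by at most $2\vartheta(D)\max_{\clos U}|M|$, after which mollification (Proposition \ref{pr:ii}, Remark \ref{remgl}) places it in $\mathcal M^{\infty}(D\setminus U_0)$ of \eqref{Minfty}, so that \eqref{affbal} applies with a new constant. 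Finally, a minor misattribution: the upgrade of $h\le g$ from quasi-everywhere to everywhere comes from the $\lambda_d$-a.e.\ comparison of the subharmonic function $u+h$ with the continuous function $M$, not from \eqref{sla}/\eqref{slb}.
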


\subsection{From smooth Arens-Singer or Jensen measures to their potentials in Theorem C}

\begin{corollary}\label{JAS} 
Let  $U_o\Subset D\subset \RR^d$ are domains, $o\in U_o$, 
$u\in \sbh(D)$ with value $u(o)\neq -\infty$ and $M\in C(D)\cap \sbh (D)$ are a functions with the Riesz measures $\upsilon_u$ and $\mu_M$, respectively.

{\bf [h]} If $\mu_M$ is an affine balayage of $\upsilon_u$ outside $\{o\}$ for the class {\rm (see \eqref{PAS1}, \eqref{PVl})}
\begin{equation}\label{PVlo}
\mathcal V_{AS}^{\infty}(D\setminus U_o):=PAS_o^1(D) \bigcap \har \bigl(U_o\setminus \{o\}\bigr) \bigcap C^{\infty} \bigl(D\setminus \{o\}\bigr), 
\end{equation}
then there is a function $h\in \har(D)$ such that $u+h\leq M$ on $D$.  

{\bf [s]} If $\mu_M$ is an affine balayage of $\upsilon_u$ outside $\{o\}$ for the class {\rm (see \eqref{PJ1}, \eqref{PVJo})}
\begin{equation}\label{PVloU}
\mathcal V_J^{\infty}(D\setminus U_o):=PJ_o^1(D)\bigcap \har \bigl(U_o\setminus \{o\}\bigr) \bigcap C^{\infty} \bigl(D\setminus \{o\}\bigr)\subset \mathcal V_{AS}^{\infty}(D\setminus U_o), 
\end{equation}
then there is a function $h\in \sbh_*(D)$ 
such that $u+h\leq M$ on $D$.  
\end{corollary}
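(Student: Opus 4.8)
The plan is to obtain both parts as applications of Theorem~C, taking the reference measure $\vartheta:=\delta_o$ and the distinguished domain $U_0:=U_o$, with the cone $H:=\har(D)$ in case \textbf{[h]} and $H:=\sbh_*(D)$ in case \textbf{[s]}. In either case $H$ is a convex cone in $\sbh_*(D)$ containing the constants, $u\in \sbh_*(D)$ (as $u(o)\neq -\infty$), $M\in C(D)$, and $0\neq \delta_o\in \Meas_{\comp}^+(D)$ with $\supp \delta_o=\{o\}\subset U_o\Subset D$. To invoke Theorem~C I first check its standing hypothesis (a) or (b): for $H=\har(D)$ I use (b), since $\har(D)$ is sequentially closed in $L^1_{\loc}(D)$ by Weyl's lemma; for $H=\sbh_*(D)$ I use (a), since the upper semicontinuous regularization of the upper limit of a locally bounded above sequence of subharmonic functions is subharmonic whenever it is not $\boldsymbol{-\infty}$. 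It then remains to produce a single constant $C\in \RR$ with
\begin{equation*}
\int_D u\dd \mu \leq \int_D M\dd \mu +C\quad\text{for all $\mu\in \mathcal M^{\infty}(D\setminus U_o)$ with $\delta_o\preceq_H \mu$,}
\end{equation*}
where $\delta_o\preceq_{\har(D)}\mu$ means $\mu\in AS_o(D)$ (Example~\ref{sbhAS}) and $\delta_o\preceq_{\sbh(D)}\mu$ means $\mu\in J_o(D)$ (Example~\ref{sbhJ}).

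The heart of the argument is to rewrite $\int_D u\dd\mu-\int_D M\dd\mu$ via the generalized Poisson\,--\,Jensen formula. Fixing such a $\mu$ and setting $V:={\pt}_{\mu-\delta_o}$, the finiteness $u(o),M(o)\in \RR$ gives condition \eqref{tfin} for both $u$ and $M$ with $\vartheta=\delta_o$, so \eqref{fPJ+} yields
\begin{equation*}
u(o)=\int_D u\dd \mu-\int_K V\dd \upsilon_u, \qquad M(o)=\int_D M\dd \mu-\int_K V\dd \mu_M,
\end{equation*}
where $K=\inhull_D(\{o\}\cup \supp \mu)$. Since $u(o),M(o)\in \RR$, neither $\upsilon_u$ nor $\mu_M$ charges $o$, and $V\equiv 0$ off $K$ by \eqref{ASpc}; hence the $K$-integrals equal the corresponding integrals over $D\setminus \{o\}$, and subtracting the two identities gives
\begin{equation*}
\int_D u\dd \mu-\int_D M\dd \mu= u(o)-M(o)+\int_{D\setminus \{o\}} V\dd \upsilon_u-\int_{D\setminus \{o\}} V\dd \mu_M.
\end{equation*}

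Next I identify the relevant potentials. Because $\mu$ is smooth with $\supp \mu\Subset D\setminus U_o$, the potential $V$ is harmonic on $U_o\setminus \{o\}$ (its support avoids the open set $U_o$), lies in $C^{\infty}(D\setminus \{o\})$ by elliptic regularity, and satisfies $V(y)=-K_{d-2}(o,y)+O(1)$ as $y\to o$; by Duality Theorem~A (resp.\ Duality Theorem~B) it belongs to $PAS_o^1(D)$ (resp.\ $PJ_o^1(D)$), so $V\in \mathcal V_{AS}^{\infty}(D\setminus U_o)$ (resp.\ $V\in \mathcal V_J^{\infty}(D\setminus U_o)$). The affine balayage hypothesis of the Corollary provides a constant $C'$, independent of $V$, with $\int_{D\setminus \{o\}} V\dd \upsilon_u\leq \int_{D\setminus \{o\}} V\dd \mu_M+C'$ for every $V$ in this class. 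Feeding this into the subtraction identity gives the required bound with $C:=u(o)-M(o)+C'$, independent of $\mu$, and Theorem~C then delivers $h\in H$ with $u+h\leq M$ on $D$---harmonic in case \textbf{[h]} and in $\sbh_*(D)$ in case \textbf{[s]}.

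The step I expect to need the most care is the passage from the smooth measures $\mu$ to the potential classes $\mathcal V_{AS}^{\infty}$, $\mathcal V_J^{\infty}$: the class $\mathcal M^{\infty}(D\setminus U_o)$ imposes $\supp \mu\Subset D\setminus U_o$, which is marginally stronger than the condition $\supp \mu\cap U_o=\varnothing$ in the Duality Theorems with $Q:=U_o$, but only the forward inclusion (each admissible $\mu$ maps into the potential class) is needed, not surjectivity, so the mismatch is harmless. A secondary point is that $\int_{D\setminus \{o\}} V\dd \upsilon_u$ may be $-\infty$; the subtraction of the two Poisson\,--\,Jensen identities stays well defined because $\int_D M\dd \mu$ is finite, exactly as in the proof of Theorem~\ref{PJf}.
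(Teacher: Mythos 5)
Your proof is correct and follows essentially the same route as the paper: Theorem C with $H=\har(D)$ resp. $H=\sbh_*(D)$ and $\vartheta=\delta_o$, the generalized Poisson\,--\,Jensen formula of Theorem \ref{PJf}, and Duality Theorems A/B linking smooth Arens\,--\,Singer/Jensen measures supported off $U_o$ to the potential classes $\mathcal V_{AS}^{\infty}(D\setminus U_o)$, $\mathcal V_J^{\infty}(D\setminus U_o)$. The only (immaterial) difference is the direction in which the duality bijection is traversed: the paper runs over potentials $V$ in the test class and passes to their Riesz measures $c_d\bigtriangleup V\dd\lambda_d$, while you start from the admissible measures $\mu$ and pass to their potentials $\pt_{\mu-\delta_o}$.
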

\begin{proof} We choose the convex cone 
\begin{equation}\label{Hhs}
H:=\left[ \begin{array}{ccc}
 \har(D)& \text{in the case {\bf [h]}}&\text{satisfying the condition \eqref{slb} from Theorem C},  \\ 
  \sbh_*(D)& \text{in the case {\bf [s]}}&\text{satisfying the condition \eqref{sla} from Theorem C},
\end{array} \right.
\end{equation}
respectively. By the generalized Poisson\,--\,Jensen formula from  Theorem \ref{PJf},  for all  potentials  $V\in \mathcal V_{AS}^{\infty}(D\setminus U_o)\supset \mathcal V_{AS}^{\infty}(D\setminus U_o)$ from \eqref{PVlo}--\eqref{PVloU} and $\vartheta:=\delta_o$,
we have 
\begin{equation}\label{uMV}
\begin{split}
\int_{D\setminus \{o\}} V \dd \upsilon_u&=u(o)+\int_D u (c_d \bigtriangleup V) \dd \lambda_d
\quad\text{for all $V\in \mathcal V_{AS}^{\infty}(D\setminus U_o)$},
\\ 
\int_{D\setminus \{o\}} V \dd \mu_M&=M(o)+\int_D M (c_d \bigtriangleup V) \dd \lambda_d 
\quad\text{for all $V\in \mathcal V_{AS}^{\infty}(D\setminus U_o)$}.
\end{split}
\end{equation}
By Duality Teorems A, the part \eqref{interAi}--\eqref{PVl}, or by Theorem B, the part \eqref{interJio}--\eqref{PVJo}, if functions $V$ run the class \eqref{PVlo} in the case {\bf [h]}
or the class \eqref{PVloU} in the case {\bf [s]}, respectively, the measures $\mu_V$ with densities 
$\dd \mu_V:=(c_d \bigtriangleup V) \dd \lambda_d$ run the classes of Arens\,--\,Singer measures from the class
(see \eqref{interAi})
\begin{equation}\label{interAio}
\mathcal M_{AS}^{\infty}  (D\setminus U_o):=\bigl\{\mu \in AS_o(D)\colon \supp \mu \cap U_o=\varnothing \bigr\}
\bigcap  \bigl(C^{\infty}(D) \dd \lambda_d \bigr)
\end{equation}
or the classes of Jensen measures from the class 
(see \eqref{interJio})
\begin{equation}\label{interJioo}
\mathcal M_J^{\infty} (D\setminus U_o):=\bigl\{\mu \in J_o(D)\colon \supp \mu \cap U_o=\varnothing \bigr\}
\bigcap  \bigl(C^{\infty}(D) \dd \lambda_d \bigr),
\end{equation} 
respectively. For the cases {\bf [h]} or {\bf [s]}, by Definition \ref{def:ab} the condition 
$\upsilon_u\curlyeqprec_H \mu_M$ with $H$ from \eqref{Hhs}
means that there is a constant $C\in \RR$ such that
\begin{equation*}
\int_{D\setminus \{o\}}  V\dd \upsilon_u 
\overset{\eqref{df:ab}}{\leq} 
\int_{D\setminus \{o\}}  V\dd \mu_M+C\quad
\text{for all $V$ from \eqref{PVlo} or \eqref{PVloU}, respectively.}
\end{equation*}
Hence, using \eqref{uMV}, we obtain with the constant 
$C_o:=C-u(o)+M(o)$ the inequalities
\begin{equation}\label{affbalo}
\int_D u\dd \mu \leq \int_D M\dd \mu+C_o 
\quad\text{for all $\mu\in \left[\begin{array}{cc}
 \mathcal M_{AS}^{\infty}  (D\setminus U_o)&
\text{from \eqref{interAio}}\\ 
\mathcal M_{J}^{\infty}  (D\setminus U_o)&
\text{from \eqref{interJioo}}
\end{array}\right.$, respectively,}        
\end{equation}
which corresponds to \eqref{affbal}. By Theorem C, this scale \eqref{affbalo} of inequalities proves the statements {\bf [h]} and {\bf [s]} for convex cone $H$ from \eqref{Hhs}, respectively. 
\end{proof}

\subsection{An embedding of smooth potentials into a class of smooth test functions}\label{ASJem}

Throughout this Subsec.\ref{ASJem}, $D\subset \RR^d$ is a domain with non-polar boundary $\partial D\subset \RR_{\infty}^d$, $o\in D$.

\begin{proposition}[{\rm a variant of  Phr\'agmen\,--\,Lindel\"of principle}]\label{FLp} If a function 
$v\in \sbh (D\setminus \{o\})$
satisfies the conditions
\begin{equation}\label{limgv}
\limsup_{o\neq x\to o}\frac{v(x)}{-K_{d-2}(x,o)} \leq 0, 
\quad \limsup_{D\ni x\to \partial D}\bigl( v(x)\bigr) \leq 0,
\end{equation} 
then $v\leq 0$ on $D\setminus \{o\}$. In particular,
if a function $V\in \sbh (D\setminus \{o\})$
satisfies the conditions
\begin{equation}\label{limgV}
\limsup_{o\neq x\to o}\frac{V(x)}{-K_{d-2}(x,o)} \leq c\in \RR^+, 
\quad \limsup_{D\ni x\to \partial D}\bigl( V(x)\bigr) \leq 0,
\end{equation}
then $V\leq cg_D(\cdot, o)$ on $D\setminus \{o\}$. 
\end{proposition}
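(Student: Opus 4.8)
The plan is to prove the first assertion by a Phragm\'en--Lindel\"of comparison against the Green's-function barrier $\epsilon g_D(\cdot,o)$, and then to obtain the second assertion by applying the first one to $V-cg_D(\cdot,o)$. First I would fix $\epsilon>0$ and set $w_\epsilon:=v-\epsilon g_D(\cdot,o)$. Since $g_D(\cdot,o)\in\har\bigl(D\setminus\{o\}\bigr)$ by \eqref{gDh}, the function $w_\epsilon$ is subharmonic on $D\setminus\{o\}$. Because $g_D(\cdot,o)\geq 0$ by \eqref{gDs}, we have $w_\epsilon\leq v$, so the second condition in \eqref{limgv} yields $\limsup_{D\ni x\to\partial D}w_\epsilon(x)\leq 0$ at every outer boundary point.

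The key step is the control of $w_\epsilon$ at the pole $o$. For $d\geq 2$ the asymptotic \eqref{gD0a}, namely $g_D(x,o)=-K_{d-2}(x,o)+O(1)$ with $-K_{d-2}(x,o)\to+\infty$ as $x\to o$, gives $g_D(x,o)/\bigl(-K_{d-2}(x,o)\bigr)\to 1$. Combining this with the first condition in \eqref{limgv} I obtain
\[
\limsup_{o\neq x\to o}\frac{w_\epsilon(x)}{-K_{d-2}(x,o)}\leq 0-\epsilon\cdot 1=-\epsilon<0,
\]
and since $-K_{d-2}(x,o)\to+\infty$ this forces $w_\epsilon(x)\to-\infty$ as $x\to o$. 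Hence $w_\epsilon$ is bounded above on $D\setminus\{o\}$ (it is upper semicontinuous on the compact set $\clos D$, bounded above near $\partial D$ and tending to $-\infty$ at $o$), and the pole is a removable obstruction for the maximum principle. Applying the maximum principle for subharmonic functions on $D\setminus\{o\}$, whose boundary in $\RR_{\infty}^d$ is $\partial D\cup\{o\}$ (including the point at infinity when $D$ is unbounded, which is precisely where the boundary hypothesis furnishes the needed growth control), I conclude $w_\epsilon\leq 0$, that is $v\leq\epsilon g_D(\cdot,o)$ on $D\setminus\{o\}$. Letting $\epsilon\to 0^+$ and using that $g_D(x,o)<+\infty$ for each fixed $x\neq o$ gives $v\leq 0$ on $D\setminus\{o\}$.

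For the second statement I would put $v:=V-cg_D(\cdot,o)\in\sbh\bigl(D\setminus\{o\}\bigr)$. Since $c\geq 0$ and $g_D\geq 0$, the bound $\limsup_{D\ni x\to\partial D}V(x)\leq 0$ transfers verbatim to $v$; and the pole condition in \eqref{limgV} together with $g_D(x,o)/\bigl(-K_{d-2}(x,o)\bigr)\to 1$ gives $\limsup_{o\neq x\to o}v(x)/\bigl(-K_{d-2}(x,o)\bigr)\leq c-c=0$. Thus $v$ satisfies the hypotheses \eqref{limgv}, and the first part yields $v\leq 0$, i.e.\ $V\leq cg_D(\cdot,o)$ on $D\setminus\{o\}$, as required.

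The main obstacle is the analysis at the pole: one must check that the hypothesis on $v/(-K_{d-2})$ combines with the precise Green-function asymptotic \eqref{gD0a} to drive $w_\epsilon$ to $-\infty$, so that the singularity at $o$ genuinely does not contribute to the supremum and the maximum principle applies cleanly. A secondary point is the degenerate case $d=1$, where $-K_{-1}(x,o)=-|x-o|$ does not blow up and $g_D(\cdot,o)$ remains bounded near $o$, so the normalization of the barrier at $o$ behaves differently; there the claim reduces to an elementary one-dimensional convexity (maximum-principle) argument, which I would dispatch separately in keeping with the ``trivial for $d=1$'' convention already used (cf.\ the proof of Proposition \ref{pt_below}).
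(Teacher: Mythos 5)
Your proposal is correct and follows essentially the same route as the paper: compare $v$ against the barrier $a\,g_D(\cdot,o)$ for arbitrary $a>0$, control the pole via the asymptotic \eqref{gD0a}, apply the maximum principle, let $a\to 0^+$, and deduce the second assertion by substituting $V-cg_D(\cdot,o)$ into the first. The only (immaterial) difference is at the pole: you show $v-\epsilon g_D(\cdot,o)\to-\infty$ there and invoke the maximum principle on $D\setminus\{o\}$ with $o$ as a boundary point, whereas the paper notes boundedness above near $o$, extends across the singularity by the removable-singularity theorem \cite[Theorem 5.16]{HK}, and applies the maximum principle on all of $D$.
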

\begin{proof} By conditions \eqref{limgv}, for any $a\in \RR^+_*$, we have
\begin{equation*}
v(x)-ag_D(x,o)\leq O(1), \; o\neq x\to o;\quad
\limsup_{D\ni x\to \partial D} \bigl(v(x)-ag_D(x,o)\bigr)\leq 0. 
\end{equation*}
Hence the function $v-ag_D\in \sbh(D\setminus \{o\})$ has the removable singularity at the point $o$ \cite[Theorem 5.16]{HK}, and the function 
\begin{equation}\label{vo}
\begin{cases}
v-ag_D &\quad\text{on $D\setminus \{o\}$},\\ 
\limsup\limits_{o\neq x\to o}\big(v(x)-ag_D(x,o)\big) 
&\quad\text{at the point $o$}
\end{cases}
\end{equation}
is subharmonic on $D$ and $\limsup_{D\ni x\to \partial D}\big(v(x)-ag_D(x,o)\big)\leq 0$. By the maximum principle, the function \eqref{vo} is negative on $D$. Therefore, $v\leq ag_D(\cdot, o)$
on $D\setminus \{o\}$ \textit{for aarbitrary\/} $a>0$. 
Thus, $v\leq 0$ on $D$. In particular, for $v:=V-cg_D(\cdot, o)$, under the conditions \eqref{limgV}, we have \eqref{limgv}
and obtain $V-cg_D(\cdot, o)\leq 0$ on $D$.
\end{proof}
\begin{proposition}[{\rm on embedding}]\label{l2} 
Let $D$ be a domain with non-polar boundary, $S_o$ be a subset from\/ \eqref{S0seto} and $r, b_{\pm}$ are constants from\/  \eqref{bbpmr}. 
For any domain\/ $U_o$ satisfying 
\begin{equation}\label{o0SU}
o\in \Int S_o \Subset 
S_o^{\cup (3r)} \Subset U_o\Subset  D ,
\end{equation}
we can find a constant $B=\const^+_{o,S_o,r,U_o}\in \RR_*^+$ 
such that 
\begin{subequations}\label{VvU0}
\begin{align}
\mathcal V_{AS}^{\infty}(D\setminus U_o)\bigm|_{D\setminus S_o}&\overset{\eqref{PVlo},\eqref{{tf}bpm}}{\subset} \mathcal V_B:=
\sbh_{00}(D\setminus S_o ;r,-B< B)\cap C^{\infty} (D\setminus S_o ),
\tag{\ref{VvU0}$_{AS}$}\label{{VvU0}AS}
\\
\mathcal V_{J}^{\infty}(D\setminus U_o)\bigm|_{D\setminus S_o}& \overset{\eqref{PVloU}, \eqref{{tf}b+}}{\subset} \mathcal V_B^+:=
\sbh_{00}^+(D\setminus S_o;\leq B)\cap C^{\infty} (D\setminus S_o ),
\tag{\ref{VvU0}$_J$}\label{{VvU0}J}
\end{align}
\end{subequations}	
where, in the case\/ \eqref{{VvU0}AS}, 
it is assumed that the subset $S_o$ is connected.
\end{proposition}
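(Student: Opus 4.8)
The plan is to fix an arbitrary $V$ in the class on the left-hand side of \eqref{{VvU0}AS} (resp.\ \eqref{{VvU0}J}) and to verify that its restriction $V\bigm|_{D\setminus S_o}$ satisfies each defining condition of $\mathcal V_B$ (resp.\ $\mathcal V_B^+$), extracting along the way a \emph{single} constant $B=\const^+_{o,S_o,r,U_o}$ that works simultaneously for every such $V$. The qualitative part is immediate: since $V\in\sbh_*(\RR^d_{\infty}\setminus\{o\})\cap C^{\infty}(D\setminus\{o\})$ and $o\in\Int S_o$ by \eqref{o0SU}, the restriction $V\bigm|_{D\setminus S_o}$ is subharmonic and $C^{\infty}$ on $D\setminus S_o$; moreover $V$ vanishes off some compact $S(V)\Subset D$ by the definition \eqref{ASpc+} of an Arens--Singer potential, so $V\bigm|_{D\setminus S_o}\in\sbh_{00}(D\setminus S_o)\cap C^{\infty}(D\setminus S_o)$. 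It remains only to bound $V$ on $\partial S_o$ from above and, in the Arens--Singer case, on $S_o^{\cup (3r)}\setminus S_o$ from below.

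For the upper bound on $\partial S_o$ I would invoke the Phr\'agmen--Lindel\"of Proposition \ref{FLp}. Because $V\in PAS_o^1(D)$, the normalization \eqref{PAS1} gives $\lim_{o\neq x\to o}V(x)/(-K_{d-2}(x,o))=1$, and because $V\equiv 0$ near $\partial D$ we have $\limsup_{D\ni x\to\partial D}V(x)\leq 0$; thus \eqref{limgV} holds with $c=1$ and Proposition \ref{FLp} yields $V\leq g_D(\cdot,o)$ on $D\setminus\{o\}$. Since $o\in\Int S_o$, the Green's function $g_D(\cdot,o)$ is continuous and finite on the compact set $\partial S_o$, so $V\leq\sup_{\partial S_o}g_D(\cdot,o)<\infty$ there, a bound independent of $V$.

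The heart of the matter is the uniform lower bound needed in the Arens--Singer case \eqref{{VvU0}AS}, and I expect this uniformity to be the main obstacle. Here I would pass through the duality: by the restricted bijection of Duality Theorem A, the part \eqref{interAi}\,$\leftrightarrow$\,\eqref{PVl} with $Q:=U_o$, every $V\in\mathcal V_{AS}^{\infty}(D\setminus U_o)$ has the form $V=\pt_{\mu-\delta_o}$ for the measure $\mu=\mathcal P_o^{-1}(V)$ lying in class \eqref{interAi}, so that $\supp\mu\cap U_o=\varnothing$ and $\mu(\RR^d)=1$. Applying Proposition \ref{pt_below}, inequality \eqref{{pmul}o}, with $L:=\clos\bigl(S_o^{\cup (3r)}\setminus S_o\bigr)$ gives
\[
\inf_{x\in L}V(x)\ \geq\ \mu(\RR^d)\,k_{d-2}\bigl(\dist(L,\supp\mu)\bigr)-k_{d-2}\Bigl(\sup_{x\in L}|x-o|\Bigr).
\]
The key point is that the right-hand side is uniform in $V$: since $\supp\mu\subset D\setminus U_o$ while $L\subset S_o^{\cup (3r)}\Subset U_o$ by \eqref{o0SU}, one has $\dist(L,\supp\mu)\geq\rho:=\dist\bigl(\clos S_o^{\cup (3r)},\RR^d\setminus U_o\bigr)>0$ for \emph{every} admissible $\mu$, and $k_{d-2}$ is increasing with $\mu(\RR^d)=1$; hence $V\geq k_{d-2}(\rho)-k_{d-2}\bigl(\sup_{x\in L}|x-o|\bigr)$ on $L$, a finite constant depending only on $o,S_o,r,U_o$. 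It is precisely this step that forces the separation $S_o^{\cup (3r)}\Subset U_o$ in \eqref{o0SU}. Taking $B$ to be the maximum of $\sup_{\partial S_o}g_D(\cdot,o)$, of the absolute value of the preceding quantity, and of $1$, I obtain $V\bigm|_{D\setminus S_o}\in\mathcal V_B$, proving \eqref{{VvU0}AS}; the standing connectedness assumption on $S_o$ enters only to keep the parallel sets $S_o^{\cup (kr)}$ domains as in Proposition \ref{llemmaD}.

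Finally, the Jensen case \eqref{{VvU0}J} is lighter. Every $V\in\mathcal V_J^{\infty}(D\setminus U_o)\subset\mathcal V_{AS}^{\infty}(D\setminus U_o)$ is a Jensen potential, hence positive on $\RR^d_{\infty}\setminus\{o\}$ by the definition in Subsec.~\ref{JP}; in particular $V\geq 0$ on $D\setminus S_o$, so neither a lower bound nor the connectedness of $S_o$ is needed, and the same upper bound $B\geq\sup_{\partial S_o}g_D(\cdot,o)$ from Proposition \ref{FLp} places $V\bigm|_{D\setminus S_o}$ in $\sbh_{00}^+(D\setminus S_o;\leq B)\cap C^{\infty}(D\setminus S_o)=\mathcal V_B^+$, which gives \eqref{{VvU0}J}.
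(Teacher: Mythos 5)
Your proof is correct and follows essentially the same route as the paper's: the upper bound comes from $V\leq g_D(\cdot,o)$ via the Phr\'agmen--Lindel\"of Proposition \ref{FLp} (the paper's Lemma \ref{lemab}), and the uniform lower bound in the Arens--Singer case comes from Duality Theorem A combined with Proposition \ref{pt_below}, inequality \eqref{{pmul}o}, using precisely the separation $S_o^{\cup(3r)}\Subset U_o$ to control $\dist(L,\supp\mu)$ uniformly (the paper's Lemma \ref{lembl}), with the Jensen case handled by positivity. The only differences are immaterial (you take the supremum of $g_D(\cdot,o)$ over $\partial S_o$ rather than over $S_o^{\cup(3r)}\setminus S_o$, and a slightly different set $L$), so the argument matches the paper's proof in substance.
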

\begin{proof} Let $V\in \mathcal V_{AS}^{\infty}(D\setminus U_o)\supset \mathcal V_{J}^{\infty}(D\setminus U_o)$.
\begin{lemma}\label{lemab}  If $V\in PAS_o(D)$, then 
$V\leq g_D(\cdot, o)$ on $D$.
\end{lemma}
\begin{proof}[of Lemma\/ {\rm \ref{lemab}}]
By the definition \eqref{PVl}  of Arens\,--\,Singer potentials, we have the conditions \eqref{limgV} of Proposition \ref{FLp}
with $c:=1$. Hence $V\leq g_D(\cdot, o)$ on $D$. 
\end{proof}

By Lemma \ref{lemab} we have
\begin{equation}\label{Babove}
\sup_{x\in S_o^{\cup(3r)}\setminus S_o}V(x)
\leq \sup_{x\in S_o^{\cup(3r)}\setminus S_o} g_D(x, o)=:B'=\const^+_{o,S_o,r}\in \RR_*^+.
\end{equation} 
If $V\in \mathcal V_{J}^{\infty}(D\setminus U_o)$, then $V\geq 0$ on $\RR_{\infty}^d\setminus \{0\}$, and we obtain 
\eqref{{VvU0}J} with $B:=B'=\const^+_{o,S_o, r}$.
In the case $V\in \mathcal V_{AS}^{\infty}(D\setminus U_0)
\setminus \mathcal V_{J}^{\infty}(D\setminus U_o)$, 
we use 
\begin{lemma}\label{lembl} Under the conditions \eqref{o0SU},
 there is a constant $B''\in \RR$ such that
\begin{equation}\label{pmulV}
\begin{split}
\inf_{x\in S_o^{\cup(3r)}\setminus \{o\}}V(x) &\geq B''=\const_{o,S_o,r,U_o}>-\infty
\\
\text{for every }V\overset{\eqref{PVl}}{\in}& PAS_o^1(D)\bigcap \har(U_o)\supset \mathcal V_{AS}^{\infty}(D\setminus U_0).
\end{split}
\end{equation} 
\end{lemma}
\begin{proof}[of Lemma\/ {\rm \ref{lembl}}] By Duality Theorem A in version \eqref{interAi0}--\eqref{PVl0},  its Riesz measure  $\varDelta_V=c_d \bigtriangleup V$ is a Arens\,--\,Singer probability measure. In particular,    
$\varDelta_V\in \Meas_{\comp}^{1+}(D\setminus U_o)$
and $V=\pt_{\varDelta_V-\delta_o}$. 
By Proposition \ref{pt_below} with $\mu\overset{\eqref{muLo}}{:=}\varDelta_V$ and  $L\overset{\eqref{muLo}}{:=}S_o^{\cup(3r)}$, 
we have
\begin{multline*}
\inf_{x\in S_o^{\cup(3r)}\setminus \{o\}}V(x)= \inf_{x\in S_o^{\cup(3r)}\setminus\{ o\}} {\pt}_{\varDelta_V-\delta_o}(x)
\\
\overset{\eqref{{pmul}o}}{\geq} \varDelta_V(\RR^d)k_{d-2}\bigl(\dist(S_o^{\cup(3r)},\supp \varDelta_V)\bigr)-
k_{d-2}\left(\sup_{x\in S_o^{\cup(3r)}}|x-o|\right)
\\
\geq k_{d-2}\bigl(\dist(S_o^{\cup(3r)}\setminus \{o\},\partial U_o)\bigr)-k_{d-2}\left(\sup_{x\in S_o^{\cup(3r)}}|x|+|o|\right)\\
\overset{\eqref{o0SU}}{=}
\const_{o,S_o,r,U_o}=B''\overset{\eqref{o0SU}}{>}-\infty.
\end{multline*}
\end{proof}

If we set $B\overset{\eqref{Babove}}{:=} \max \{B', (B'')^-\}$, then \eqref{Babove} and  \eqref{pmulV} give \eqref{{VvU0}AS}.

\end{proof}

\begin{proof}[of implication\/ {\rm {\bf s}\ref{{s}3}$\Rightarrow${\bf s}\ref{{s}1}}] 
According to \eqref{S0seto}, we can choose a point $o\in \Int S_0$ and a domain $U_o$ so that the relationships \eqref{o0SU}
are fulfilled, and $u(o)\neq -\infty$. The latter means
\begin{equation*}
-\infty <\int_{S_o} k_{d-2}\bigl(|x-o|\bigr) \dd \upsilon_u  
\Longrightarrow \int_{S_o} g_D(x,o) \dd \upsilon_u(x)<+\infty.  
\end{equation*}
Thus, by Lemma \ref{lemab}, we obtain
\begin{equation}\label{Vu}
\int_{S_o} V \dd \upsilon_u\leq \int_{S_o} g_D(\cdot, o)
\dd \upsilon_u=:C_1=\const^+_{o,S_o,u}\in \RR^+    
\quad\text{for all $V\in PAS_o(D)$}.
\end{equation}
Besides, by Lemma \ref{lembl}, we have 
\begin{equation}\label{VM}
-\infty <\const_{o,S_o,r, U_o,M}=B''\mu_M(S_o)\leq \int_{S_o} V\dd \mu_M
\quad\text{for all $V\in PAS^1_o(D)\bigcap \har (U_o)$}.  
\end{equation}
The condition {[\bf {s}\ref{{s}3}]} or the  
condition {[\bf {h}\ref{{h}3}]} means that
there is a constant $C_2\in \RR$ such that  
\begin{equation}\label{V2}
\begin{split}
\int_{D\setminus S_o} v \dd \upsilon_u
&\leq \int_{D\setminus S_o}v\dd \mu_M +C_2\\
\text{for all }v&\overset{\eqref{VvU0}}{\in}
\left[\begin{array}{cc}
\mathcal V_b^+ &\text{with $b\overset{\eqref{{VvU0}J}}{:=}b_+$ },  \\ 
 \mathcal V_b &\text{with $b\overset{\eqref{{VvU0}AS}}{:=}\min \{b_+,-b_-$\}},
\end{array} 
\right.\text{ \rm resp.}
\end{split}
\end{equation}
Let $B\in \RR_*^+$  be a constant from 
Proposition \ref{l2}  on embedding with inclusions 
\eqref{VvU0}. We multiply both sides of the inequality from \eqref{V2} by the number $B/b$ and obtain
\begin{equation*}
\int_{D\setminus S_o} v \dd \upsilon_u
\leq \int_{D\setminus S_o}v\dd \mu_M +BC_2
\text{ for all }v\overset{\eqref{VvU0}}{\in}
\left[\begin{array}{cc}
\mathcal V_B^+ &\text{in the case {[\bf {s}\ref{{s}3}]}},\\   
 \mathcal V_B &\text{in the case {[\bf {h}\ref{{h}3}]}},
\end{array} 
\right.\text{ \rm resp.}
\end{equation*}
Hence, by inclusions  \eqref{VvU0} from Proposition \ref{l2}  on embedding, we have 
\begin{equation*}\label{V2Bin}
\int_{D\setminus S_o} V \dd \upsilon_u
\leq \int_{D\setminus S_o}V\dd \mu_M +BC_2
\text{ for all }V\overset{\eqref{VvU0}}{\in}
\left[\begin{array}{cc}
\mathcal V_{J}^{\infty}(D\setminus U_o)&\text{in the case {[\bf {s}\ref{{s}3}]}},\\   
 \mathcal V_{AS}^{\infty}(D\setminus U_o) &\text{in the case {[\bf {h}\ref{{h}3}]}},
\end{array} 
\right.\text{ \rm resp.}
\end{equation*}
The last one after addition with two more inequalities  \eqref{Vu}--\eqref{VM} gives the inequality
\begin{subequations}\label{V2Bin+}
\begin{align}
\int_{D\setminus \{o\}} V \dd \upsilon_u
&\leq \int_{D\setminus \{o\}}V\dd \mu_M +C, 
\tag{\ref{V2Bin+}M}\label{{V2Bin+}M}
\\
\intertext{where  $C:=C_1+BC_2-B''\mu_M(S_o)=\const_{o,S_o,r,U_o,u,M}\in \RR$ independent of $V$,}
\text{for all }V&\overset{\eqref{VvU0}}{\in}
\left[\begin{array}{cc}
\mathcal V_{J}^{\infty}(D\setminus U_o)&\text{in the case {[\bf {s}\ref{{s}3}]}},\\   
 \mathcal V_{AS}^{\infty}(D\setminus U_o) &\text{in the case {[\bf {h}\ref{{h}3}]}},
\end{array} 
\right.\text{ \rm resp.}
\tag{\ref{V2Bin+}V}\label{{V2Bin+}V}
\end{align}
\end{subequations}
By Definition \ref{def:ab}, \eqref{{V2Bin+}M}--\eqref{{V2Bin+}V}
means that the measure $\mu_M$ is an affine balayage of the measure $\upsilon_u$ outside $\{o\}$ for the class
$\mathcal V_J^{\infty}(D\setminus U_o)$ from \eqref{PVloU}
in the case {[\bf {s}\ref{{s}3}]}
or for the class $\mathcal V_{AS}^{\infty}(D\setminus U_o)$ from \eqref{PVlo} in the case {[\bf {h}\ref{{h}3}]}, respectively. By Corollary \ref{JAS}, there is a function $h\in \sbh_*(D)$ in the case {[\bf {s}\ref{{s}3}]} 
or a function $h\in \har(D)$  in the case {[\bf {h}\ref{{h}3}]}, respectively, 
such that $u+h\leq M$ on $D$.  Thus, the implications 
{\rm {\bf s}\ref{{s}3}$\Rightarrow${\bf s}\ref{{s}1}}
and\/ {\bf h}{\rm \ref{{h}3}}$\Rightarrow${\bf h}{\rm\ref{{h}1}} are proved. 
\end{proof}

\section{Applications to the distribution of zeros
of holomorphic functions}\label{secH}
\setcounter{equation}{0}

\subsection{Additional definitions, notations and conventions}
For $n \in \NN$ we  denote by $\mathbb C^n$ the {\it $n$-dimensional complex  space over $\CC$\/} with the standard {\it norm\/} $|z|:=\sqrt{|z_1|^2+\dots+|z_n|^2}$ for $z=(z_1,\dots ,z_n)\in \CC^n$ and the distance function $\dist (\cdot, \cdot)$. By $\CC^n_{\infty}:=\CC^n\cup \{\infty\}$, and $\CC_{\infty}:=\CC_{\infty}^1$ we denote the \textit{one-point Alexandroff compactifications of $\CC^n$, and $\CC$;\/} $|\infty|:=+\infty$. 
If necessary, we identify $\CC^n$ and $\CC^n_{\infty}$
with $\RR^{2n}$ and $\RR^{2n}_{\infty}$ respectively (over $\RR$). In such case, the preceding terminology and concepts are transferred from $\RR^{2n}_{\infty}$  to $\CC^n_{\infty}$ naturally. 
For a proper subset $S\subset \CC^n_{\infty}$, the class $\Hol (S)$ consists of restrictions to $S$ of functions holomorphic in some (in general, its own for each function) open set $O\subset \CC_{\infty}^n$ containing $S$;%%ОПЕЧАТКА $\Hol_*(S)\overset{\eqref{sbh}}{:=} \Hol (S)\setminus \{0\}$. 
Throughout this Sec.~\ref{secH}, $D\neq \varnothing$ is a \textit{domain in\/} $\CC^n$.

\paragraph{\bf Zeros of holomorphic functions of several variables} 
{\cite[Ch.~1, 1,2]{Chirka}, \cite[\S~11]{Ronkin},\cite[Ch.~4]{Kha12}. The \textit{counting function\/ {\rm (or multiplicity function, or divisor)} of zeros} of function $f\in  \Hol_*(D)$ is a function
$\Zero_f\colon D\to \NN_0$ that can be defined as
\cite[1.5, Proposition 2]{Chirka} 
\begin{equation*}
\Zero_f(a):=\max\Bigl\{p\in \NN_0  \colon 
\limsup_{z\to a}\frac{|f(z)|}{|z-a|^p}<+\infty\Bigr\},
\quad a\in D.
\tag{\ref{nZn}Z}\label{Zerof0}
\end{equation*} 
with the \textit{support set}
$\supp \Zero_f=\bigl\{z\in D\colon f(z)=0\bigr\}$.
For $f=0\in \Hol(D)$, by definition, $\Zero_0\equiv +\infty$ on $D$. To each counting function of zeros $\Zero_f$ we assign the \textit{ counting measure of zeros} $n_{\Zero_f} \in \Meas^+ (D)$ defined as a Radon measure by the formulas 
\begin{subequations}\label{nZn}
\begin{align}
\begin{split}
n_{\Zero_f}(c)&:= \int_D c \dd n_{\Zero_f} 
:= \int_D c  \Zero_f \dd \varkappa_{2n-2}\\
\text{\it over all finite}&\text{ \it functions }c \in C_0(D):=
\bigl\{c\in C(D)\colon \supp c \Subset D\bigr\},
\end{split}
\tag{\ref{nZn}R}\label{{nZn}R}
\\
\intertext{or, equivalently, as a Borel measure on $D$ according to the rule} n_{\Zero_f} (S) &= \int_S
\Zero_f \dd \varkappa_{2n-2} 
\quad\text{\it for all open or closed subsets $S \Subset D$}.
\tag{\ref{nZn}B}\label{{nZn}B}
\end{align}
\end{subequations} 

\begin{PLf}[\cite{Lelong}]
Let $f \in \Hol_*(D)$. The following relations hold for the Riesz measure $\varDelta_{\ln |f|}$ of the function $\ln |f| 
\in \sbh_* (D)$:
\begin{equation}\label{nufZ}
\varDelta_{\ln |f|}\overset{\eqref{df:cm}}{:=}  c_{2n}\bigtriangleup \ln |f| \overset{\eqref{df:cm}}{=}\frac{(n-1)!}{2\pi^n \max\{1,2n-2\}}\bigtriangleup 
 \ln |f| {=}n_{\Zero_f}.
\end{equation}
\end{PLf}
Let $Z \colon D \to \RR^+$ be a function on $D$.
We call this function $Z$   a \textit{subdivisor of zeros\/} for function $f \in  \Hol(D)$ if $Z\leq  \Zero_f$ on $D$. The integrals with respect to a positive measure whose integrands contain a subdivisor are everywhere below treated as upper integrals \cite{Bourbaki}.
  
\paragraph{\bf Zeros of holomorphic functions of one variable}
\cite[0.1]{Kha12}. Let $D\subset \CC$,  $f\in  \Hol_*(D)$. Then the counting function\/ {\rm (or multiplicity function, or divisor)} of zeros of $f$ is the function
\begin{subequations}\label{Zf}
\begin{align}
\Zero_f(z)&\overset{\eqref{Zerof0}}{=}\max \Bigl\{ p\in \NN_0\colon 
\frac{f}{(\cdot-z)^p}\in \Hol(D)\Bigr\}, \quad z\in D,
\tag{\ref{Zf}Z}\label{{Zf}Z}
\\
\intertext{the counting measure of zeros defined as a Radon measure by the formulas}
\begin{split}
n_{\Zero_f}(c)&\overset{\eqref{{nZn}R}}{=}\sum_{z\in D} \Zero_f(z)c(z)=\int_D c\Zero_f\dd \varkappa_0\\
\text{\it for all}&\text{ finite functions }c \in C_0(D)
\text{ with }\supp \Zero_f:=\{z\in D\colon f(z)=0\},
\end{split}
\tag{\ref{Zf}R}\label{{Zf}R}
\\
\intertext{or as a Borel measure on $D$ according to the rule}
n_{\Zero_f}(S)&\overset{\eqref{{nZn}B}}{=}\sum_{z\in S} \Zero_f(z)\quad
\text{\it for all sets }S \Subset D.
\tag{\ref{Zf}B}\label{{Zf}B}
\end{align}
\end{subequations}
In this case, the support set $\supp \Zero_f$ is a locally finite set of isolated points in $D$.

An \textit{ indexed set} ${\sf Z}:=\{{\sf z}_{k}\}_{k=1,2,\dots}$ of points ${\sf z}_{k}\in D$
is {\it locally finite in} $D$ if \begin{equation*}
\# \{k \colon {\sf z}_{k}\in S\}<+\infty \quad \text{\it for each subset $S\Subset D$.}    
\end{equation*}
The \textit{counting measure\/}  $n_{\sf Z}\in \Meas^+(D)$ of this indexed set ${\sf Z}$ is defined as
\begin{equation}\label{Z}
n_{\sf Z}:=\sum_k \delta_{{\sf z}_k}, 
\text{ or, equivalently, }
n_{\sf Z}(S):=\sum_{{\sf z}_k\in S}1 \text { for any $S\subset D$}
\end{equation}
 Let ${\sf Z}$ and ${\sf Z}'$ be a pair of  indexed locally finite sets in $D$. By definition, ${\sf Z}={\sf Z}'$ if $n_{\sf Z}\overset{\eqref{Z}}{=}n_{{\sf Z}'}$, and
${\sf Z}'\subset {\sf Z}$ if $n_{{\sf Z}'}\overset{\eqref{Z}}{\leq} n_{\sf Z}$. 
An indexed set ${\sf Z}$ is the {\it zero set} of $f\in \Hol_*(D)$ if $n_{\sf Z}=n_{\Zero_f}$. 
A function $f\in \Hol(D)$ \textit{vanishes on\/}  a indexed set ${\sf Z}$ if ${\sf Z}\subset \Zero_f$. %%ОПЕЧАТКА an ->a

\subsection{Zero sets of holomorphic functions with restriction on their growth}\label{SSHn}

The following Theorem \ref{ThHol} develops results from \cite[Main Theorem, Theorems 1--3]{KhaRoz18}, and from \cite[Theorem 1]{MenKha19}. Both the integrals on the right-hand sides of inequalities \eqref{in:HZ} and \eqref{in:HZb}, and the pair of integrals in inequality \eqref{in:HZb+} below  are, generally speaking, upper integrals in the sense N.~Bourbaki \cite{Bourbaki}.

\begin{theorem}\label{ThHol} Let $D\neq \varnothing$ be a domain in $\CC^n$, 
\begin{equation}\label{M}
M_+\in \sbh_*(D), 
\quad  M_-\in \sbh_*(D), \quad M:=M_+-M_-\in \dsbh(D)
\end{equation}
are functions   with the Riesz measures $\mu_{M_+},\mu_{M_-}\in \Meas^+(D)$, and the Riesz  charge $\mu_M=\mu_{M_+}-\mu_{M_-}\in \Meas (D)$, respectively, and $f\in \Hol_* (D)$ be a function such that
\begin{equation}\label{fMD}
|f|\leq \exp M \quad \text{on $D$}.
\end{equation}
Then
\begin{enumerate}[{\rm [{\sf Z}I]}]
\item\label{ZI} 
For any  connected set $S_o\Subset D$ with $\Int S_o\neq \varnothing$  from\/ \eqref{S0seto} and  for any numbers $r, b_{\pm}$ from\/ \eqref{bbpmr+}, i.\,e.,
$0<3r<\dist (S_o, \partial D)$, $-\infty<b_-<0<b_+<+\infty$, 
there is a constant\/ 
\begin{equation}\label{C1}
C=\const_{u,M,S_o, r, b_{\pm}}\in\RR
\end{equation} 
such that
\begin{equation}\label{in:HZ}
\int_{D\setminus S_o} v \Zero_f \dd \varkappa_{2n-2}
\leq \int_{D\setminus S_o^{\cup(3r)}} v \dd \mu_M
+ \int_{S_o^{\cup(3r)}\setminus S_o} (-v) \dd \mu_{M_-}+C  
\end{equation}
for all functions $v\in \sbh_{+0}^{\uparrow}(D\setminus S_o;\circ r, b_-< b_+)$.
\item\label{ZII} For any  connected set $S_o$  from \eqref{S0seto} and  numbers $b_{\pm},r$ from\/ \eqref{bbpmr+}, i.\,e.,
$0<3r<\dist (S_o, \partial D)$, $-\infty<b_-<0<b_+<+\infty$, 
there is a constant\/ \eqref{C1} such that
\begin{equation}\label{in:HZb}
\hspace{-7pt}\int_{D\setminus S_o} v \Zero_f \dd \varkappa_{2n-2}
\leq \int_{D\setminus S_o} v \dd \mu_M+C 
\text{ for all $v\in \sbh_{+0}^{\uparrow}(D\setminus  S_o;  r, b_-< b_+)$.}
\end{equation}
 
\item\label{ZIII} For any  set $S_o$  from \eqref{S0seto}, constant $b\in \RR_*^+$, and subdivisor 
${\sf Z}\leq \Zero_f$,  there is a constant  
$C=\const_{u,M,S_o}\in\RR$ such that
\begin{equation}\label{in:HZb+}
\int_{D\setminus S_o} v {\sf Z} \dd \varkappa_{2n-2}
\leq \int_{D\setminus S_o} v \dd \mu_M+C 
\quad \text{for all $v\in \sbh_0^{+\uparrow}(D\setminus S_o; \leq b)$.}
\end{equation}
\end{enumerate}
Besides, the implication   {\sf [Z\ref{ZI}]}$\Rightarrow${\sf [Z\ref{ZII}]} is true.

\end{theorem}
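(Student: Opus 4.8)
The plan is to reduce the $\delta$-subharmonic majorant $M=M_+-M_-$ to a genuinely subharmonic one by absorbing $M_-$ into the function built from $\ln|f|$, and then to quote the implications already established in Criteria \ref{crit1} and \ref{crit2}. I would set $u:=\ln|f|+M_-$. Since $f\in\Hol_*(D)$ and $M_-\in\sbh_*(D)$, we have $u\in\sbh_*(D)$, and by linearity of $\bigtriangleup$ together with the Poincar\'e--Lelong formula \eqref{nufZ} its Riesz measure is $\upsilon_u=\varDelta_{\ln|f|}+\varDelta_{M_-}=n_{\Zero_f}+\mu_{M_-}$. The growth bound \eqref{fMD} gives $\ln|f|\le M_+-M_-$, i.e.\ $u\le M_+$ on $D$; hence the pair $(u,M_+)$ satisfies condition [\textbf{s}\ref{{s}1}] of Criterium \ref{crit1} with $h:=0\in\sbh_*(D)$ and condition [\textbf{h}\ref{{h}1}] of Criterium \ref{crit2} with $h:=0\in\har(D)$, taking the majorant to be $M:=M_+$.

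Next I would invoke the implications proved in Section \ref{proofs12}. By Remark \ref{rembD} these hold for an arbitrary subharmonic majorant, so the possible discontinuity of $M_+$ (which need not be continuous even when $M\in\dsbh(D)$) causes no trouble. Applying [\textbf{h}\ref{{h}1}]$\Rightarrow$[\textbf{h}\ref{{h}2+}] to $(u,M_+)$ yields, for connected $S_o$ and $r,b_{\pm}$ as in \eqref{bbpmr+}, an inequality bounding $\int_{D\setminus S_o}v\dd\upsilon_u$ above by $\int_{D\setminus S_o^{\cup(3r)}}v\dd\mu_{M_+}+C$ for all $v\in\sbh_{+0}^{\uparrow}(D\setminus S_o;\circ r,b_-<b_+)$. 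Substituting $\upsilon_u=n_{\Zero_f}+\mu_{M_-}$, moving $\int v\dd\mu_{M_-}$ to the right, and using $\mu_{M_+}=\mu_M+\mu_{M_-}$ splits the right-hand side exactly into $\int_{D\setminus S_o^{\cup(3r)}}v\dd\mu_M+\int_{S_o^{\cup(3r)}\setminus S_o}(-v)\dd\mu_{M_-}+C$, which is \eqref{in:HZ}; this proves [\textbf{Z}\ref{ZI}]. The same substitution applied to [\textbf{h}\ref{{h}1}]$\Rightarrow$[\textbf{h}\ref{{h}2}] gives \eqref{in:HZb} directly, proving [\textbf{Z}\ref{ZII}].

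For [\textbf{Z}\ref{ZIII}] I would instead use the purely subharmonic balayage [\textbf{s}\ref{{s}1}]$\Rightarrow$[\textbf{s}\ref{{s}2}] with the positive test class $\sbh_0^{+\uparrow}(D\setminus S_o;\le b)$. After the same substitution and the rearrangement $\int v\dd\mu_{M_+}-\int v\dd\mu_{M_-}=\int v\dd\mu_M$ one obtains $\int_{D\setminus S_o}v\dd n_{\Zero_f}\le\int_{D\setminus S_o}v\dd\mu_M+C$. Since every subdivisor obeys ${\sf Z}\le\Zero_f$ and the test functions are positive, $\int v{\sf Z}\dd\varkappa_{2n-2}\le\int v\Zero_f\dd\varkappa_{2n-2}=\int v\dd n_{\Zero_f}$, which yields \eqref{in:HZb+} with a constant furnished by [\textbf{s}\ref{{s}2}] that is independent of the subdivisor ${\sf Z}$. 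Finally, for the concluding implication [\textbf{Z}\ref{ZI}]$\Rightarrow$[\textbf{Z}\ref{ZII}] I would restrict to the smaller class $\sbh_{+0}^{\uparrow}(D\setminus S_o;r,b_-<b_+)\subset\sbh_{+0}^{\uparrow}(D\setminus S_o;\circ r,b_-<b_+)$ of Proposition \ref{pr:12} (inclusions \eqref{incv}), where $v\ge b_-$ on the whole shell $S_o^{\cup(3r)}\setminus S_o$; then $\int_{D\setminus S_o^{\cup(3r)}}v\dd\mu_M+\int_{S_o^{\cup(3r)}\setminus S_o}(-v)\dd\mu_{M_-}=\int_{D\setminus S_o}v\dd\mu_M-\int_{S_o^{\cup(3r)}\setminus S_o}v\dd\mu_{M_+}$, and bounding the last integral below by $b_-\mu_{M_+}(S_o^{\cup(3r)}\setminus S_o)$ absorbs it into the constant and returns \eqref{in:HZb}.

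The reduction itself is clean, so the main obstacle is the legitimacy of these charge manipulations rather than any deep analytic difficulty: all the integrals are upper integrals in the sense of Bourbaki and may equal $+\infty$, so each rearrangement must be performed with $v\ge0$ (or $v\ge b_-$ on the shell) to keep the auxiliary terms $\int v\dd\mu_{M_\pm}$ sign-definite and to make $\int v\dd\mu_{M_+}-\int v\dd\mu_{M_-}=\int v\dd\mu_M$ well defined. It is precisely the choice $u=\ln|f|+M_-$, which shifts $\mu_{M_-}$ onto the $\upsilon_u$ side, that supplies these sign conditions.
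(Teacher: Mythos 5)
Your proposal is correct and follows essentially the same route as the paper's own proof: set $u:=\ln|f|+M_-\leq M_+$ with $u\in\sbh_*(D)$, identify $\upsilon_u=n_{\Zero_f}+\mu_{M_-}$ via the Poincar\'e\,--\,Lelong formula \eqref{nufZ}, invoke the implications {\bf h}\ref{{h}1}$\Rightarrow${\bf h}\ref{{h}2+}, {\bf h}\ref{{h}1}$\Rightarrow${\bf h}\ref{{h}2} and {\bf s}\ref{{s}1}$\Rightarrow${\bf s}\ref{{s}2} of Criteria \ref{crit1} and \ref{crit2} (legitimately, thanks to Remark \ref{rembD}, despite $M_+$ not being continuous), and rearrange the $\mu_{M_\pm}$ terms using positivity of the test functions. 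Even your derivation of {\sf [Z\ref{ZI}]}$\Rightarrow${\sf [Z\ref{ZII}]} via the bound $v\geq b_-$ on the shell $S_o^{\cup(3r)}\setminus S_o$ is in substance the paper's estimate $\int_{S_o^{\cup(3r)}\setminus S_o}|v|\dd|\mu_M|\leq\max\{b_+,-b_-\}\,|\mu_M|(S_o^{\cup(3r)}\setminus S_o)$, so there is nothing to add.
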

\begin{proof} We can rewrite \eqref{fMD} as
\begin{equation}
\sbh_*(D)\ni u:=\ln |f|+M_-\leq M_+\overset{\eqref{M}}{\in} \sbh_*(D). 
\end{equation}
By Poincar\'e\,--\,Lelong formula \eqref{nufZ}, and by implication {\bf h\ref{{h}1}}$\Rightarrow${\bf h\ref{{h}2+}} of  Criterium \ref{crit2} together  with Remark \ref{rembD}, there is a constant \eqref{C1} such that 
\begin{multline*}
\int_{D\setminus S_o} v \Zero_f \dd \varkappa_{2n-2}
+ \left(\int_{S_o^{\cup(3r)}\setminus S_o}+\int_{D\setminus S_o^{\cup(3r)}}\right) \, v \dd \mu_{M_-}\\
\overset{\eqref{nufZ}}{=}\int_{D\setminus S_o} v \dd (\varDelta_{\ln|f|}+\mu_{M_-})
\overset{\eqref{almB}}{\leq} \int_{D\setminus S_o^{\cup(3r)}} v \dd \mu_M+C.
\end{multline*}
for all functions $v\in \sbh_{+0}^{\uparrow}(D\setminus S_o;\circ r, b_-< b_+)$. Hence we have the statement {\rm [{\sf Z}\ref{ZI}]}, \eqref{in:HZ}. 

Similarly, by Definition \ref{def:ab} of affine balayage, by  Poincar\'e\,--\,Lelong formula \eqref{nufZ}, and by implication {\bf h\ref{{h}1}}$\Rightarrow${\bf h\ref{{h}2}} 
of  Criterium \ref{crit2} together with Remark \ref{rembD}, we obtain the statement {\rm [{\sf Z}\ref{ZII}]}, \eqref{in:HZb}. Besides, the implication {\sf [Z\ref{ZI}]}$\Rightarrow${\sf [Z\ref{ZII}]} follows from  the estimate 
\begin{equation*}
\int_{S_o^{\cup(3r)}\setminus S_o}|v|\dd |\mu_M|\overset{\eqref{{tf}bpm0}}{\leq}
\max \{b_+,-b_-\} |\mu_M|(S_o^{\cup(3r)}\setminus S_o)
\end{equation*}
true for each test function $v\overset{\eqref{{tf}bpm0}}{\in}\sbh_{+0}(D\setminus S_o; r,b_-< b_+)$.

Finally, by Definition \ref{def:ab} of affine balayage, by  Poincar\'e\,--\,Lelong formula \eqref{nufZ}, and by implication {\bf s\ref{{s}1}}$\Rightarrow${\bf s\ref{{s}2}}
of  Criterium \ref{crit1} together with Remark \ref{rembD}, we get 
the statement {\rm [{\sf Z}\ref{ZIII}]}, \eqref{in:HZb+},
 since
\begin{equation*}
\int_{D\setminus S_o} v {\sf Z} \dd \varkappa_{2n-2}\leq \int_{D\setminus S_o} v \Zero_f \dd \varkappa_{2n-2} 
\end{equation*} 
for every\textit{ positive\/} function $v\in \sbh_0^{+\uparrow}(D\setminus S_o; \leq b)$.
\end{proof}
\begin{remark} For $n>1$, if the majorizing function $M$ 
in \eqref{fMD} is plurisubharmonic, then the scale of necessary conditions for the distribution of zeros of the holomorphic function $f$ can be much wider than that presented in Theorem \ref{ThHol}. It should include other characteristics related to the Hausdorff measure of smaller dimension than $2n-2$. We plan to consider this case elsewhere. In particular, analytical and polynomial disks should play a key role in this case (see \cite[Ch.~3]{Krantz}, \cite{Schachermayer}, 
\cite{Poletsky93}, \cite{Po99},  \cite[\S~4]{KhaKha19}, etc.). 
\end{remark}

\subsection{The case of a finitely connected domain $D\subset \CC$}\label{Dfd}

 \paragraph{\bf Agreement.} Throughout this Subsec. \ref{Dfd}, the domain $D\subset \CC$ is\textit{ finitely connected in\/} $\CC_{\infty}$ with number of component $\#\Conn_{\CC_{\infty}} %%%misprint \partial
 \partial D<+\infty$,  among which \textit{there is at least one component containing two different points.\/} Then the  \textit{boundary} $\partial D$ of such  domain $D$ is a \textit{non-polar set.}

\begin{lemma}[{\rm \cite[Lemma 2.1]{Kha07}}]\label{lem:har} If $h\in \har (D)$, then there are a real  number $c <\#\Conn_{\CC_{\infty}} \partial D-1$     %%%misprint %%\partial
and a function $g\in \Hol (D)$ without zeros in $D$, i.\,e.
with $n_{\Zero_g}=0$, such that
\begin{equation}\label{estHar}
\ln \bigl|g(z)\bigr|\leq h(z)+c^+\ln\bigl(1+|z|\bigr)\quad\text{for all $z\in D$.} 
\end{equation}
If\/ $\clos D\neq \CC_{\infty}$, then on the right-hand side of\/ \eqref{estHar} we can put\/ $c:=0$. 
\end{lemma}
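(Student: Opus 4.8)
The plan is to realize $h$ as the real part of a holomorphic function up to finitely many logarithmic corrections that absorb the conjugate periods, and then convert those corrections into a single zero-free holomorphic factor. Write $m:=\#\Conn_{\CC_\infty}\partial D$; since $D\subset\CC$, the complement $\CC_\infty\setminus D$ splits into $m$ components $K_0\ni\infty$ and bounded $K_1,\dots,K_{m-1}$, and I fix a point $a_j\in K_j$ for $j=1,\dots,m-1$. The conjugate differential $\ast\dd h$ is closed on $D$, so it has well-defined periods $P_k:=\oint_{\gamma_k}\ast\dd h$ over loops $\gamma_k$ encircling $K_k$, and these $\gamma_1,\dots,\gamma_{m-1}$ generate $H_1(D)$. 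Since $\ast\dd\log|z-a_j|=\dd\arg(z-a_j)$ has period $2\pi\delta_{jk}$ over $\gamma_k$, the function $H:=h-\sum_j\alpha_j\log|z-a_j|$ with $\alpha_j:=P_j/(2\pi)$ has all conjugate periods zero. Hence $H$ admits a single-valued harmonic conjugate $\widetilde H$ on $D$, and $F:=H+i\widetilde H\in\Hol(D)$.

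Next I would clear the fractional parts of the $\alpha_j$. Put $n_j:=\lceil\alpha_j\rceil\in\mathbb{Z}$ and $\beta_j:=n_j-\alpha_j\in[0,1)$, and set
\[
g:=e^{F}\prod_{j=1}^{m-1}(z-a_j)^{n_j}.
\]
Because $a_j\notin D$, each factor $(z-a_j)^{n_j}$ is single-valued, holomorphic and zero-free on $D$, so $g\in\Hol(D)$ is zero-free, i.e. $n_{\Zero_g}=0$; moreover $\ln|g|=H+\sum_j n_j\log|z-a_j|=h+\sum_j\beta_j\log|z-a_j|$.

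Then comes the growth estimate. From $\dfrac{|z-a_j|}{1+|z|}\le\dfrac{|z|+|a_j|}{1+|z|}\le\max\{1,|a_j|\}$ and $\beta_j\ge0$ I obtain $\sum_j\beta_j\log|z-a_j|\le c\ln(1+|z|)+C$ with $c:=\sum_{j=1}^{m-1}\beta_j\in[0,m-1)$ and a finite constant $C$. Replacing $g$ by $e^{-C}g$ folds $C$ into $g$ and yields $\ln|g|\le h+c^+\ln(1+|z|)$ with $c<m-1=\#\Conn_{\CC_\infty}\partial D-1$ (in the simply connected case $m=1$ the sum is empty, $c=0$, and one simply reports $c:=-1$, for which $c^+=0$). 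This proves the main assertion.

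For the addendum $\clos D\neq\CC_\infty$ with $c:=0$: if $\infty\notin\clos D$ then $D$ is bounded, $\ln(1+|z|)$ is bounded on $D$, and the growth term is absorbed into $C$, giving $\ln|g|\le h$. Otherwise I pick a finite $w\notin\clos D$ and apply $T(z):=1/(z-w)$, which carries $D$ onto a bounded finitely connected domain $D'$ with non-polar boundary; applying the bounded case to $h':=h\circ T^{-1}\in\har(D')$ gives a zero-free $g'\in\Hol(D')$ with $\ln|g'|\le h'$, whence $g:=g'\circ T\in\Hol(D)$ is zero-free and $\ln|g(z)|=\ln|g'(T(z))|\le h'(T(z))=h(z)$, i.e. $c=0$. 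The main obstacle is the topological bookkeeping of the first step: pinning down that exactly $m-1$ logarithmic terms suffice to annihilate every conjugate period, and that the resulting leading coefficient $\sum_j\beta_j$ at $\infty$ is strictly below $m-1$; the asymptotic estimate and the Möbius reduction are then routine.
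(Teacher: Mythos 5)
Your proof is correct, but there is nothing in the paper to compare it with: the paper does not prove this lemma at all, it imports it ready-made from \cite[Lemma~2.1]{Kha07}. Judged on its own, your argument is a complete and self-contained proof by the classical route: kill the periods $P_j$ of the conjugate differential $\ast\dd h$ over a homology basis of cycles around the bounded complementary components by subtracting $\alpha_j\log|z-a_j|$ with $a_j$ in those components, obtain a single-valued harmonic conjugate and hence a zero-free factor $e^{F}$, and restore single-valuedness of the correction with integer powers $(z-a_j)^{n_j}$, which are holomorphic and zero-free on $D$ since $a_j\notin D$. Two of your choices are exactly the points where a careless version would fail, and you got both right: (i) taking $n_j=\lceil\alpha_j\rceil$ forces every residual exponent $\beta_j=n_j-\alpha_j$ into $[0,1)$, so the error $\sum_j\beta_j\log|z-a_j|$ admits the upper bound $c\ln(1+|z|)+C$ even where $z$ approaches $K_j$ and $\log|z-a_j|\to-\infty$ (this sign discipline is what makes the estimate survive complementary components with empty interior, such as punctures or slits); (ii) the strict bound $c=\sum_j\beta_j<m-1$ holds because each $\beta_j<1$, and your separate report of $c:=-1$ in the simply connected case is precisely what the strict inequality $c<\#\Conn_{\CC_\infty}\partial D-1$ of the statement requires, since then $c^+=0$ and $\ln|g|=h$ exactly. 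The M\"obius reduction $T(z)=1/(z-w)$, $w\notin\clos D$, for the addendum is also sound: $T(D)$ is a bounded finitely connected domain, the logarithmic term is absorbed into a constant there, and holomorphy, zero-freeness and the inequality all pull back, giving \eqref{estHar} with $c:=0$.
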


\begin{theorem}\label{th:3}  Let $M\in \dsbh(D)$ be a function from\/ \eqref{M} with $M_+\in C(D)$, and\/  ${\sf Z}:=\{{\sf z}_{k}\}_{k}$ be an  indexed 
locally finite  set in $D$ of points ${\sf z}_{k}\in D$.
If there are a connected set\/ $S_o$ as in \eqref{S0seto},  numbers $b_{\pm}$, $r$ as in\/ \eqref{bbpmr+}, and a constant $C\in \RR$ such that    
\begin{equation}\label{in:HZb+Z}
\begin{split}
\sum_{{\sf z}_k\in D\setminus S_o} 
v ({\sf z}_k)&\leq \int_{D\setminus S_o} v \dd \mu_M+C \\
\text{for all } v&\in \sbh_{00}(D\setminus S_o; r,b_-< b_+)\bigcap  C^{\infty}(D\setminus S_o),
\end{split}
\end{equation}
then there are a real number  $c<\#\Conn \partial D-1$ %%%misprint \partial
and a function  $f\in \Hol(D)$ with zero set 
${\sf Z}$ such that 
\begin{equation}\label{fM}
\ln \bigl|f(z)\bigr|\leq M(z)+c^+\ln\bigl(1+|z|\bigr) \quad \text{for all
$z\in D$}, 
\end{equation}
where we additionally set  $c:=0$ if $\clos D\neq \CC_{\infty}$.
\end{theorem}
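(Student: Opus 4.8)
The plan is to deduce this converse statement from the equivalence \textbf{h}\ref{{h}3}$\Leftrightarrow$\textbf{h}\ref{{h}1} of Criterium \ref{crit2}, after trading the $\delta$-subharmonic majorant $M=M_+-M_-$ for the genuine subharmonic majorant $M_+$. First I would invoke the classical solvability of the Weierstrass problem on a planar domain: since $D\subset\CC$ and ${\sf Z}$ is locally finite, there is a function $f_0\in\Hol_*(D)$ whose zero set is ${\sf Z}$, i.\,e.\ $n_{\Zero_{f_0}}=n_{\sf Z}$. I then set
\[
u:=\ln|f_0|+M_-\in\sbh_*(D),
\]
which is not $\equiv-\infty$ because $f_0\not\equiv 0$ and $M_-\in\sbh_*(D)$. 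By the Poincar\'e\,--\,Lelong formula \eqref{nufZ} and linearity of the Laplacian, $u$ has Riesz measure $\upsilon_u=n_{\sf Z}+\mu_{M_-}\in\Meas^+(D)$, while the majorant $M_+\in\sbh_*(D)\cap C(D)$ has Riesz measure $\mu_{M_+}=\mu_M+\mu_{M_-}$.

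Next I would verify that the hypothesis \eqref{in:HZb+Z} is \emph{exactly} condition \textbf{h}\ref{{h}3} of Criterium \ref{crit2} for the pair $(u,M_+)$, with the same $S_o$, $b_\pm$, $r$. For each test function $v\in\sbh_{00}(D\setminus S_o;r,b_-<b_+)\cap C^\infty(D\setminus S_o)$ the support $S(v)\Subset D$ is compact and $v$ is bounded on $D\setminus S_o$ (squeezed between $b_\pm$ on $S_o^{\cup(3r)}\setminus S_o$ and of compact support elsewhere), so the integral $\int_{D\setminus S_o}v\dd\mu_{M_-}$ is finite. Hence the affine balayage inequality for $(u,M_+)$,
\[
\int_{D\setminus S_o}v\dd\upsilon_u\leq\int_{D\setminus S_o}v\dd\mu_{M_+}+C,
\]
becomes, after substituting $\upsilon_u=n_{\sf Z}+\mu_{M_-}$ and $\mu_{M_+}=\mu_M+\mu_{M_-}$ and cancelling the common finite term $\int_{D\setminus S_o}v\dd\mu_{M_-}$, precisely
\[
\sum_{{\sf z}_k\in D\setminus S_o}v({\sf z}_k)\leq\int_{D\setminus S_o}v\dd\mu_M+C,
\]
which is \eqref{in:HZb+Z} with the \emph{same} constant $C$. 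The hypotheses of Criterium \ref{crit1}, hence of \ref{crit2}, all hold: $\partial D$ is non-polar by the Agreement of Subsec.~\ref{Dfd}, $M_+\in\sbh(D)\cap C(D)$, and $u\in\sbh_*(D)$.

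Consequently \textbf{h}\ref{{h}3} holds for $(u,M_+)$, and by Criterium \ref{crit2} so does \textbf{h}\ref{{h}1}: there is $h\in\har(D)$ with $u+h\leq M_+$ on $D$, that is $\ln|f_0|+h\leq M_+-M_-=M$ on $D$. To turn the harmonic $h$ into a holomorphic factor I would apply Lemma \ref{lem:har}: there are a real $c<\#\Conn_{\CC_{\infty}}\partial D-1$ (with $c:=0$ when $\clos D\neq\CC_{\infty}$) and a zero-free $g\in\Hol(D)$, $n_{\Zero_g}=0$, with $\ln|g|\leq h+c^+\ln(1+|z|)$ on $D$. Setting $f:=f_0g\in\Hol(D)$ yields $n_{\Zero_f}=n_{\Zero_{f_0}}+n_{\Zero_g}=n_{\sf Z}$, so $f$ has zero set ${\sf Z}$, and
\[
\ln|f|=\ln|f_0|+\ln|g|\leq\ln|f_0|+h+c^+\ln(1+|z|)\leq M+c^+\ln(1+|z|),
\]
which is \eqref{fM}.

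The routine inputs are the additivity of Riesz measures and the finiteness of $\int v\dd\mu_{M_-}$; the genuinely external ingredients are the Weierstrass theorem on $D$ and Lemma \ref{lem:har}. The hard part will be the bookkeeping that identifies \eqref{in:HZb+Z} with the affine balayage condition \textbf{h}\ref{{h}3} for the \emph{shifted} pair $(u,M_+)=(\ln|f_0|+M_-,\,M_+)$: one must confirm that every integral in sight is well defined (so the cancellation of the $\mu_{M_-}$-term is legitimate and the constant $C$ survives unchanged) and that the data $S_o,b_\pm,r$ transfer verbatim, so that the black-box application of Criterium \ref{crit2} is fully justified.
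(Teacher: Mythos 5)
Your proof is correct, and its skeleton is the paper's: identify \eqref{in:HZb+Z} with condition {\bf h}3 of Criterium \ref{crit2} for a function $u\in\sbh_*(D)$ whose Riesz measure is $n_{\sf Z}+\mu_{M_-}$, against the continuous majorant $M_+$; apply the implication {\bf h}3$\Rightarrow${\bf h}1 to get $h\in\har(D)$ with $u+h\leq M_+$, i.e.\ $\ln|f_0|+h\leq M$; and convert $h$ into a zero-free factor $g$ by Lemma \ref{lem:har}, finishing with $f:=f_0g$. The one genuine difference is how $u$ is produced. The paper first takes an abstract $u\in\sbh_*(D)$ with Riesz measure $n_{\sf Z}+\mu_{M_-}$ (Arsove's representation theorem), and only after applying Criterium \ref{crit2} brings in the Weierstrass theorem, needing Weyl's lemma to identify $u=\ln|f_{\sf Z}|+M_-+H$ with a harmonic remainder $H$ that is then absorbed into Lemma \ref{lem:har} together with $h$. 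You front-load the Weierstrass theorem and define $u:=\ln|f_0|+M_-$, so the Poincar\'e\,--\,Lelong formula \eqref{nufZ} gives $\upsilon_u=n_{\sf Z}+\mu_{M_-}$ at once, the remainder $H$ vanishes identically, and both Arsove's theorem and Weyl's lemma drop out of the argument --- a modest but real economy. You are also more explicit than the paper about why the term $\int_{D\setminus S_o}v\dd\mu_{M_-}$ may be cancelled (the paper performs this rewriting of \eqref{in:HZb+Z} silently); note only that the test class bounds $v$ above by $b_+$ on $\partial S_o$ alone, not on all of $S_o^{\cup(3r)}\setminus S_o$, so the ``squeeze'' you invoke should instead be justified by the maximum principle (using $v\equiv 0$ off $S(v)$ and $v\leq b_+$ on $\partial S_o$), after which your finiteness claim, and hence the identification with {\bf h}3, stands.
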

\begin{proof} We can rewrite relation \eqref{in:HZb+Z} as
\begin{equation*}
\int_{D\setminus S_o}v \dd (n_{\sf Z}+\mu_{M_-})\leq \int_{D\setminus S_o} v \dd \mu_{M_+}+C 
\end{equation*}
where the constant $C$ is independent of $v\in \sbh_{00}(D\setminus S_o; r,b_-< b_+)\bigcap  C^{\infty}(D\setminus S_o)$. By Definition \ref{def:ab}, this means that
 $\mu_{M_+}$ is an affine  balayage of  $n_{\sf Z}+\mu_{M_-}\in \Meas^+(D)$ for the class $\sbh_{00}(D\setminus S_o; r,b_-< b_+)\bigcap  C^{\infty}(D\setminus S_o)$  outside $S_o$. There is a function  $u\in \sbh_*(D)$ with the  Riesz measure $n_{\sf Z}+\mu_{M_-}$ \cite[Theorem 1]{Arsove}. It follows from  implication {\bf h\ref{{h}3}}$\Rightarrow${\bf h\ref{{h}1}} of Criterium \ref{crit2}  that there exists a function $h\in \har(D)$ such that
$u+h\leq M_+$. According to one of Weierstrass theorems, there is  a function $f_{\sf Z}\in \Hol(D)$ with the zero set  ${\sf Z}$. Hence, using  Weyl's lemma for Laplace equation,  we have a representation of the function $u=\ln |f_{\sf Z}|+M_-+H$, where $H\in \har(D)$.
Therefore
\begin{equation}\label{fH}
 \ln |f_{\sf Z}|+H+h\leq M_+-M_-\overset{\eqref{M}}{=}M \quad\text{on $D$}. 
\end{equation}
By Lemma  \ref{lem:har}, there is a function $g\in \Hol(D)$ \textit{without zeros\/} such that 
\begin{equation}\label{gH}
\ln |g|\leq H+h+c^+\ln\bigl(1+|\cdot|\bigr)\quad \text{on $D$},
\end{equation} 
where $c$ is a constant from  Lemma \ref{lem:har}.  Addition \eqref{fH} and \eqref{gH} gives 
\begin{equation*}
\ln |f_{\sf Z}|+\ln |g|\leq M+c^+\ln\bigl(1+|\cdot|\bigr) \quad\text{on $D$.}
\end{equation*}
 If we set $f:=gf_{\sf Z}\in \Hol(D)$, then ${\sf Z}$ is the zero set of $f$, and  we have  \eqref{fM}. 
\end{proof}
The intersection of Theorem \ref{th:3} with Theorem \ref{ThHol}, [{\sf Z\ref{ZII}}],  gives the following
\begin{Criterium}\label{crit3} Let the conditions of Theorem \ref{th:3} be satisfied.
Besides, let the domain $D$ is simple connected or $\CC_{\infty}\setminus D\neq \varnothing$ under restrictions from our Agreement.   
Then the following  four assertion are equivalent:
\begin{enumerate}[{\bf [z1]}]
\item\label{z1} There is a function $f\in \Hol(D)$ with the zero set ${\sf Z}$ such that
$|f|\leq \exp M$ on $D$. 

\item\label{z2} For any  connected set $S_o$  from\/ \eqref{S0seto} and for any  numbers $b_{\pm},r$ from\/ \eqref{bbpmr+}, 
there is a constant\/ $C$ as in\/  \eqref{C1} such that
\begin{equation}\label{in:HZ1}
\sum_{{\sf z}_k\in D\setminus S_o} v ({\sf z}_k)
\leq \int_{D\setminus S_o^{\cup(3r)}} v \dd \mu_M
+ \int_{S_o^{\cup(3r)}\setminus S_o} (-v) \dd \mu_{M_-}+C  
\end{equation}
for all functions $v\in \sbh_{+0}^{\uparrow}(D\setminus S_o;\circ r, b_-< b_+)$.

\item\label{z3} For any  connected set $S_o$  from \eqref{S0seto} and for any  numbers $b_{\pm},r$ from\/ \eqref{bbpmr+}, 
there is a constant\/ $C$ as in \eqref{C1} such that
\begin{equation}\label{in:HZb1}
\sum_{{\sf z}_k\in D\setminus S_o} v ({\sf z}_k)
\leq \int_{D\setminus S_o} v \dd \mu_M+C
\end{equation}
for all $v\in \sbh_{+0}^{\uparrow}(D\setminus  S_o;  r, b_-< b_+)$.
\item\label{z4} There are  connected set $S_o$  as in \eqref{S0seto},  numbers $b_{\pm},r$ as in\/ \eqref{bbpmr+}, and a constant\/ $C$  such that we have 
\eqref{in:HZb1} for all functions 
$ v\in \sbh_{00}(D\setminus S_o; r,b_-< b_+)\bigcap  C^{\infty}(D\setminus S_o)$.
\end{enumerate}
\end{Criterium}
\begin{proof} The implications   {\bf [z\ref{z1}]}$\Rightarrow${\bf [z\ref{z2}]}
$\Rightarrow${\bf [z\ref{z3}]}$\Rightarrow${\bf [z\ref{z4}]}  follows from Theorem \ref{ThHol} with   {\sf [Z\ref{ZI}]}$\Rightarrow${\sf [Z\ref{ZII}]} and Proposition \ref{pr:12}. The implications   {\bf [z\ref{z4}]}$\Rightarrow${\bf [z\ref{z1}]}
follows from Theorem \ref{th:3}, where  $c=0$ in \eqref{fM} according to the properties of the domain $D$.
\end{proof}

\begin{remark} See Remark \ref{rem:7} on a 
special case of Criterium  \ref{crit3}  announced in  \cite[Theorem 2]{MenKha19}.
\end{remark}

\begin{remark}   The work \cite[Theorems 2, 4,5]{KhaKha19} contains a wide range of necessary or sufficient conditions under which there exists a function $f\in \Hol_*(D)$
 that \textit{vanishes on\/} ${\sf Z}$  and satisfies the inequality $|f|\leq \exp M$ on $D$.  
These results do not follow directly from our Criterium \ref{crit3}.
\end{remark}

\paragraph{\bf Acknowledgements} 
The work was supported by a Grant of the Russian Science Foundation (Project No. 18-11-00002, Bulat Khabibullin, sections 1--11), and by a Grant   of the Russian Foundation of Basic Research (Project No. 19-31-90007, Enzhe Menshikova, section 12).

{\sl Bashkir State University, Ufa, Bashkortostan, Russian Federation}
\end{document}